\documentclass[12pt,reqno]{amsart}

\usepackage{amssymb,bm}

\usepackage{enumerate}
\usepackage{tikz}
\usepackage[margin=0.8in]{geometry}
\usepackage[colorlinks,pdfstartview=FitH]{hyperref}

\hypersetup{
    colorlinks,
    citecolor=blue,
    filecolor=black,
    linkcolor=red,
    urlcolor=black
}

\usepackage{graphicx}

\usepackage{comment}
\usepackage{xcolor}
\usepackage{url}
\usepackage{subcaption}
\usepackage[noadjust]{cite}
\usepackage{mathtools}
\usepackage{extarrows}

\theoremstyle{plain}
\newtheorem{theorem}{Theorem}[section]
\newtheorem{lemma}[theorem]{Lemma}
\newtheorem{proposition}[theorem]{Proposition}
\newtheorem{corollary}[theorem]{Corollary}
\newtheorem{conjecture}[theorem]{Conjecture}
\newtheorem{remark}{Remark}[section]
\newtheorem{example}{Example}[section]
\newtheorem{question}[theorem]{Question}

\numberwithin{equation}{section}

\newcommand{\N}{\mathbb{N}}

\newcommand{\R}{\mathbb{R}}

\newcommand*{\hd}{\dim_{\mathcal{H}}}
\newcommand*{\sd}{\dim_{\mathcal{S}}}

\newcommand*{\bd}{\dim_{\mathcal{B}}}

\begin{document}
\title[Hausdorff dimension of random complex series ]{Hausdorff dimension of images and graphs of some\\ random complex series}
\author{Chun-Kit Lai}
\address{Department of Mathematics, San Francisco State University, San Francisco, CA 94132, USA}
\email{cklai@sfsu.edu}
\author{Ka-Sing Lau}

\author{Peng-Fei Zhang}
\address{Department of Mathematics, Southwest Jiaotong University, Chengdu 611756, China.}
\email{pfzhang@link.cuhk.edu.hk}
\keywords{Random complex Weierstrass-type series, Random complex Riemann-type series, Fourier transform, Hausdorff dimension, Bessel function, Steinhaus random variables}
\subjclass[2010]{Primary 28A80, 42A55; Secondary 60G17, 42A38}

\begin{abstract}
Let  $\{X_n= e^{2\pi i \theta_n}\}$ be a sequence  of  Steinhaus random variables, where $\theta_n$ are independent and uniformly distributed on $[0,1]$.  We  compute the almost sure  Hausdorff dimension of the images and graphs of the random complex  series $S(x)=\sum_{n=1}^{\infty}a_n X_n\phi_n(\lambda_nx)$, where $\lambda_n$ is  an increasing sequence with $\sup_n\lambda_{n+1}/\lambda_n<\infty$ and $\phi_n$ satisfies some uniform Lipschitz and boundedness conditions. This class of series includes the famous Weierstrass and Riemann functions as well as others appeared in literature. These results help predict the exact values of the deterministic cases.
\end{abstract}

\maketitle

\section{Introduction}

\subsection{A historical account.} The goal of this paper\footnote{The deterministic version of the problem was first studied by the second-named author, Ka-Sing Lau, when he was investigating the Cantor boundary behavior of the Weierstrass functions. Lau brought the other two authors into the study, for both random and deterministic images. Unfortunately, Lau was deceased in 2021 with the work unfinished. We decide to include all possible generalizations of the random situation and make it public this year.} is to study the fractal dimensions of a randomized model of the
  Weierstrass function $W_{\beta, \lambda}$ and the general Riemann function $R_{a,b}$ as images in the complex planes:
\[
W_{\beta, \lambda}(x)=\sum_{n=1}^{\infty}\lambda^{-\beta n}e^{2\pi i\lambda^nx}\quad (\lambda>1, 0<\beta\leq 1),\qquad R_{a, b}(x)=\sum_{n=1}^{\infty}\frac{e^{2\pi in^{a} x}}{n^{b}}\quad (a>0, b>1).
\]
Readers can refer to Figures \ref{fig1} and \ref{fig2} for some of the intriguing pictures.
Weierstrass and Riemann functions have a long history undoubtedly. In 1872, Weierstrass \cite{Weierstrass} credited Riemann (in or before 1861) with recognizing that the series
\[
\sum_{n=1}^{\infty}\frac{\sin\pi n^2 x}{n^2}
\]
represents a continuous nowhere-differentiable function, though Riemann left no proof of its differentiability.  Weierstrass did not prove it either and instead presented his own celebrated example of nowhere differentiable continuous function
\[
\sum_{n=1}^{\infty}a^n\cos(b^n\pi x),\quad 0<a<1,\;\; ab>1+\frac{3\pi}{2}.
\]
In 1916, Hardy \cite{Hardy}  refined the non-differentiability condition of the Weierstrass function into the now widely recognized strict threshold: $0<a<1$ and $ab\geq 1$; he also proved that the Riemann function is non-differentiable at all irrational points and at some rational points.  It was not until 1970 that Gerver \cite{Ger} obtained a complete characterization of its differentiability, showing that the Riemann function is differentiable only at those rational numbers expressible as a quotient of two odd integers, and its derivative  is $-1/2$. For general $a,b$, it is a standard proof in real analysis that $R_{a, b}$ is differentiable when $b>a+1$. It follows from   \cite{Johnsen}  that  $R_{a, b}(x)$ is nowhere differentiable for  $b\leq a-1$.
Luther  \cite{Luther} proved that the imaginary part of the deterministic  function $R_{2, b}$ is nowhere differentiable for $1 \leq b \leq 3/2$.

Both the Weierstrass and Riemann functions were real-valued trigonometric series in the classical study. Considering it as a complex series has also been studied for more than half a century.

\begin{figure}[ht]
\begin{subfigure}{0.32\textwidth}
\centering\includegraphics[height=3.5cm]{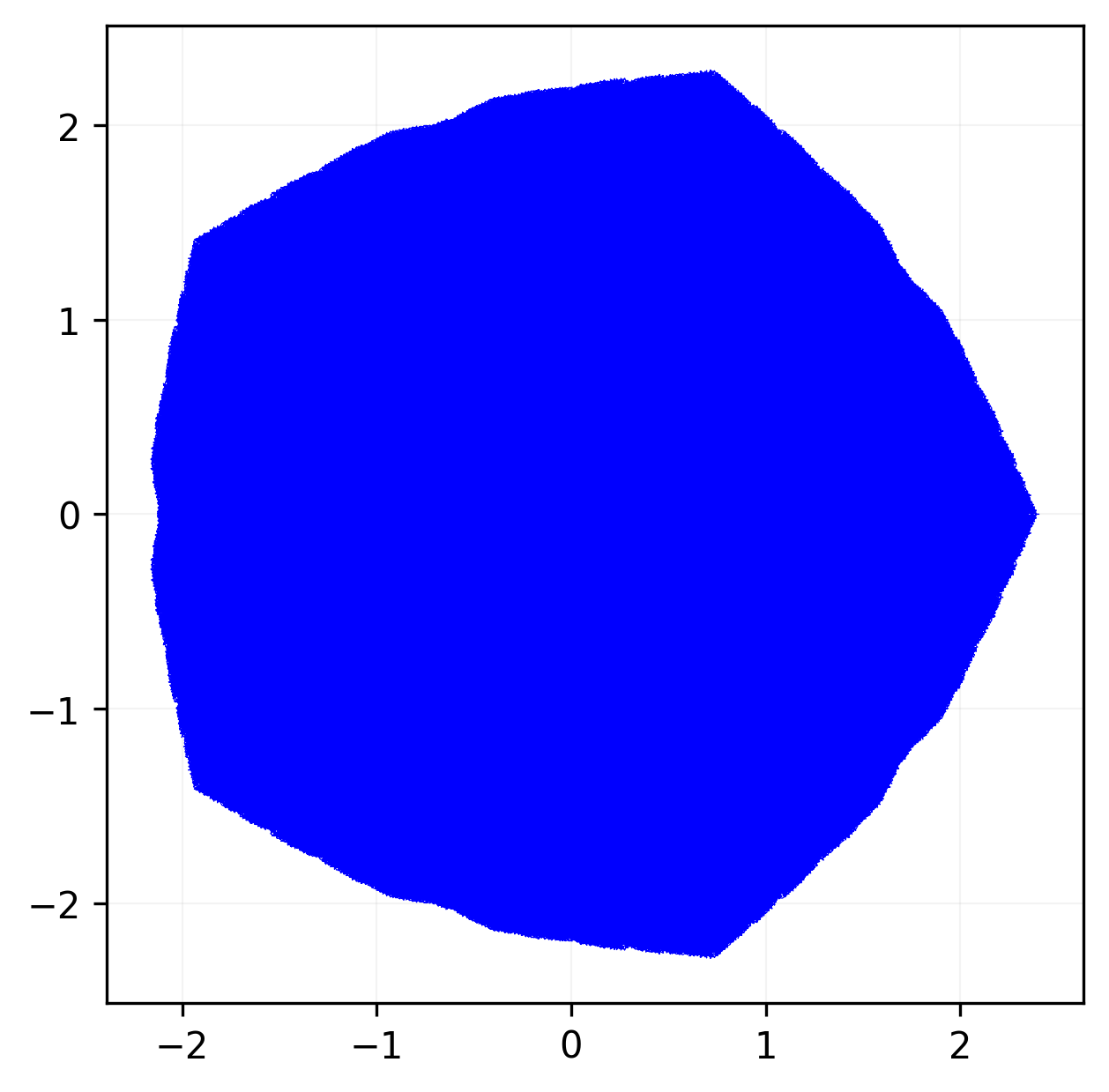}
\caption*{$\lambda=6,\beta=0.3$}
\end{subfigure}
\hfill%
\begin{subfigure}{0.32\textwidth}
\centering\includegraphics[height=3.5cm]{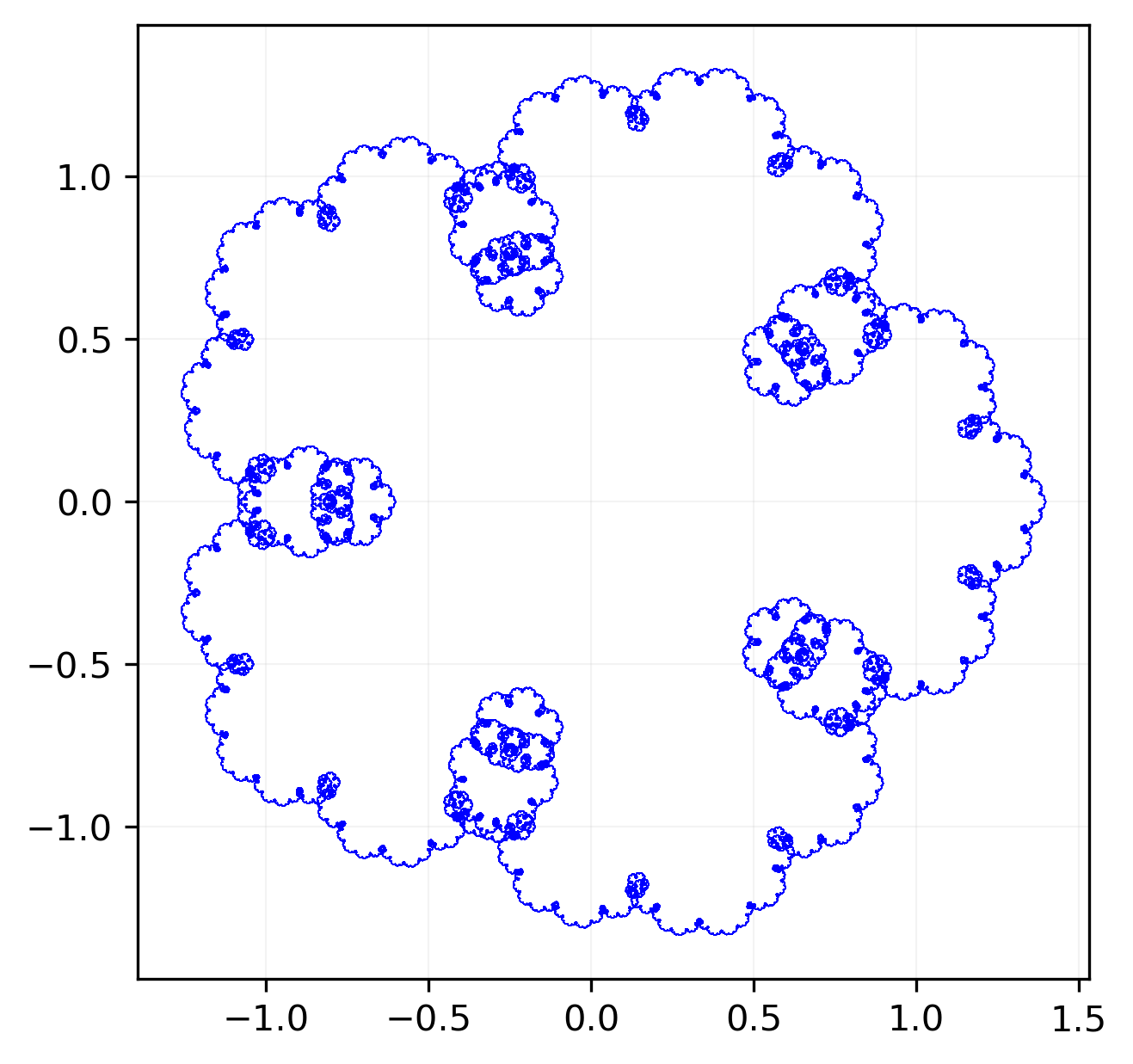}
\caption*{$\lambda=6,\beta=0.7$}
\end{subfigure}
\hfill%
\begin{subfigure}{0.32\textwidth}
\centering\includegraphics[height=3.5cm]{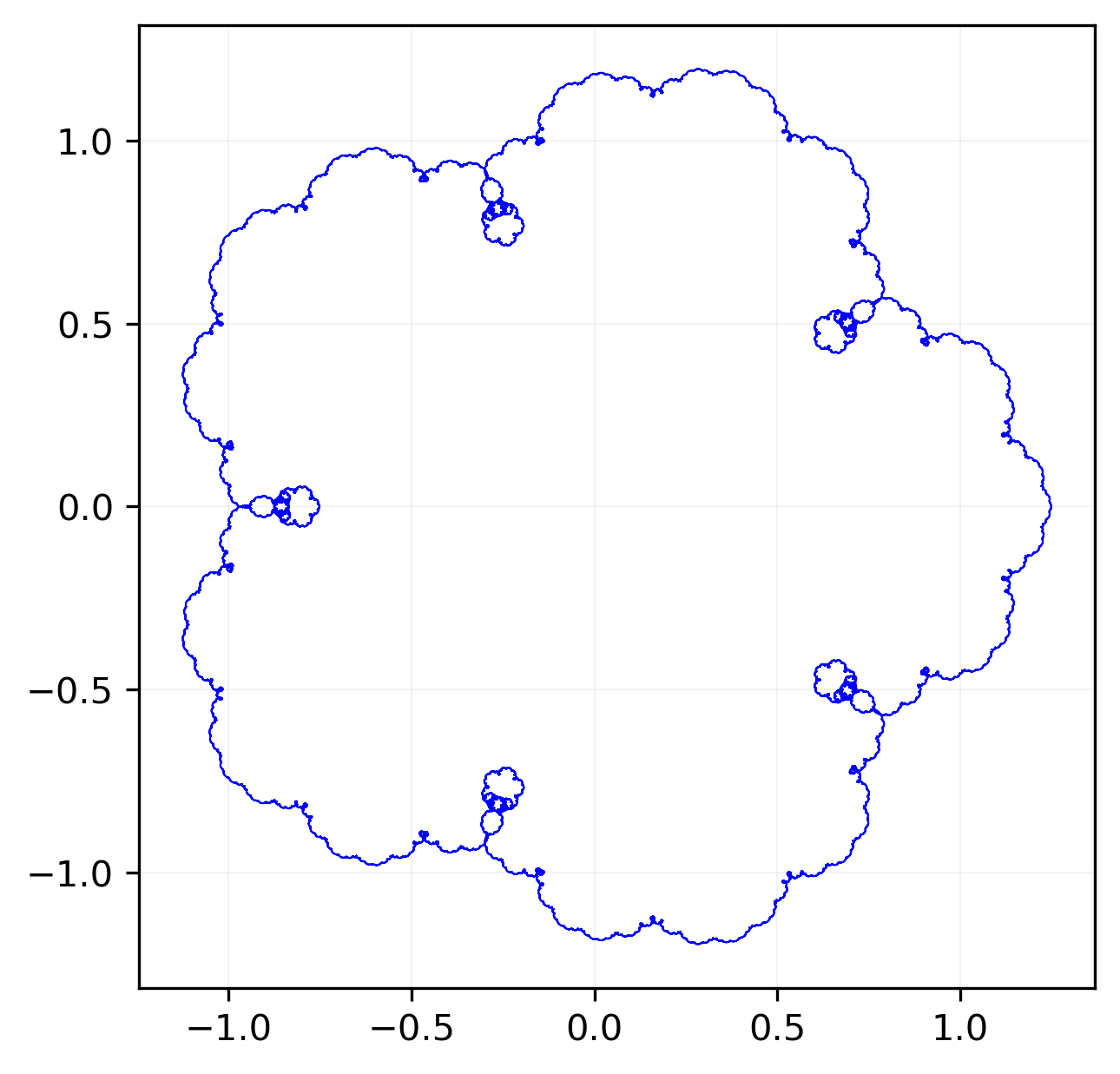}
\caption*{$\lambda=6,\beta=0.9$}
\end{subfigure}

\begin{subfigure}{0.32\textwidth}
\centering\includegraphics[height=3.5cm]{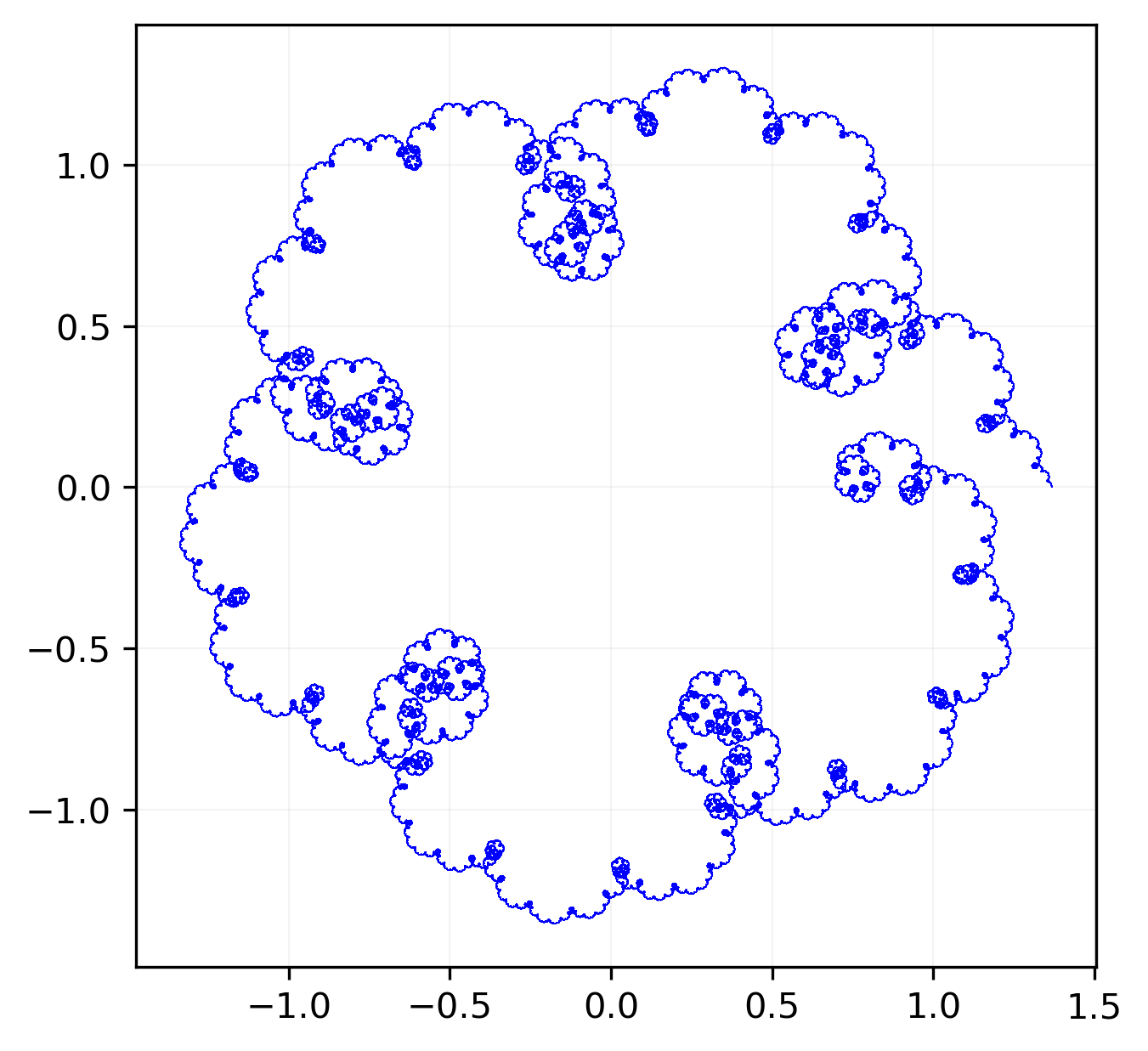}
\caption*{$\lambda=6.5,\beta=0.7$}
\end{subfigure}
\hfill%
\begin{subfigure}{0.32\textwidth}
\centering\includegraphics[height=3.5cm]{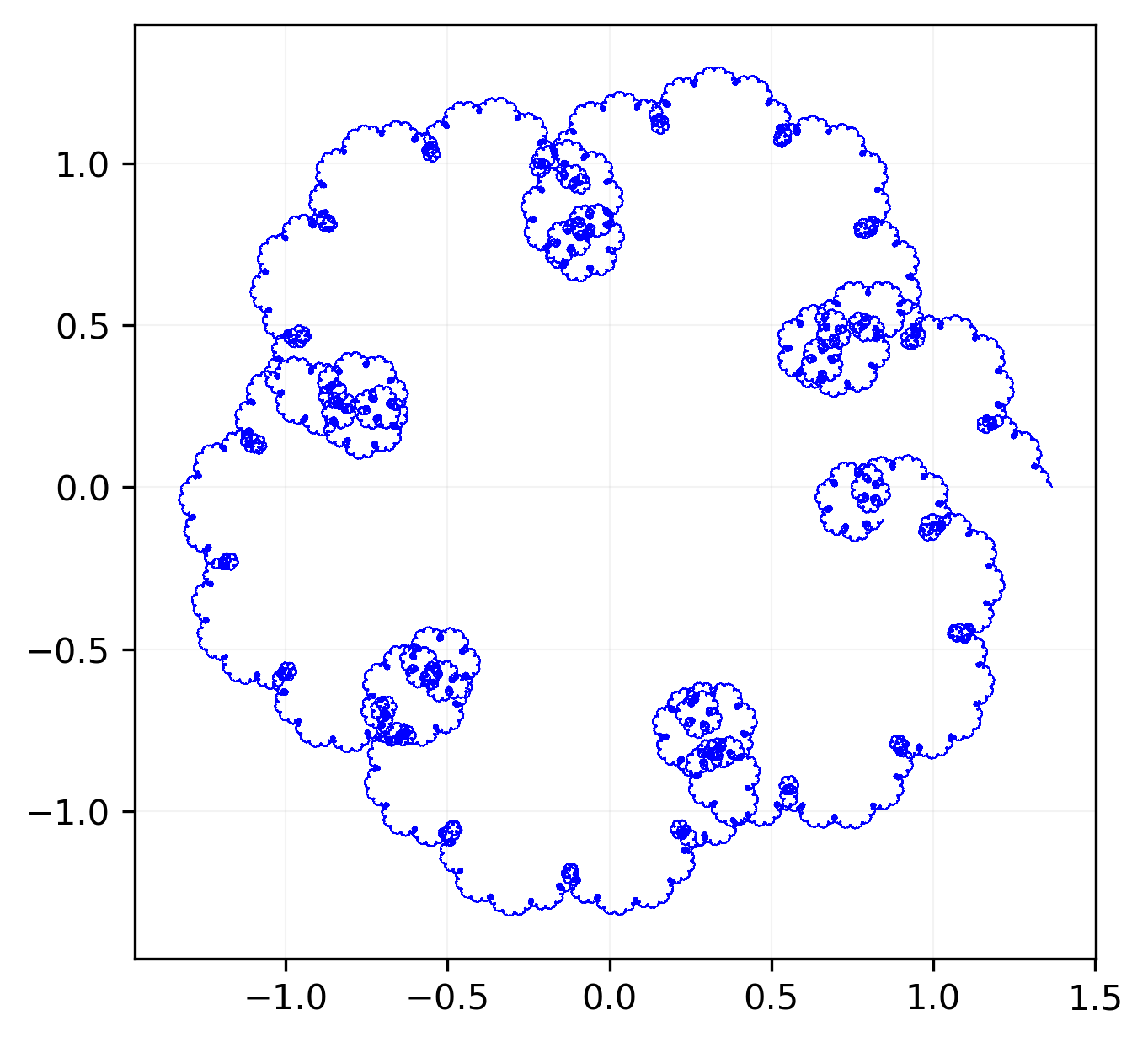}
\caption*{$\lambda=\sqrt{43},\beta=0.7$}
\end{subfigure}
\hfill%
\begin{subfigure}{0.32\textwidth}
\centering\includegraphics[height=3.5cm]{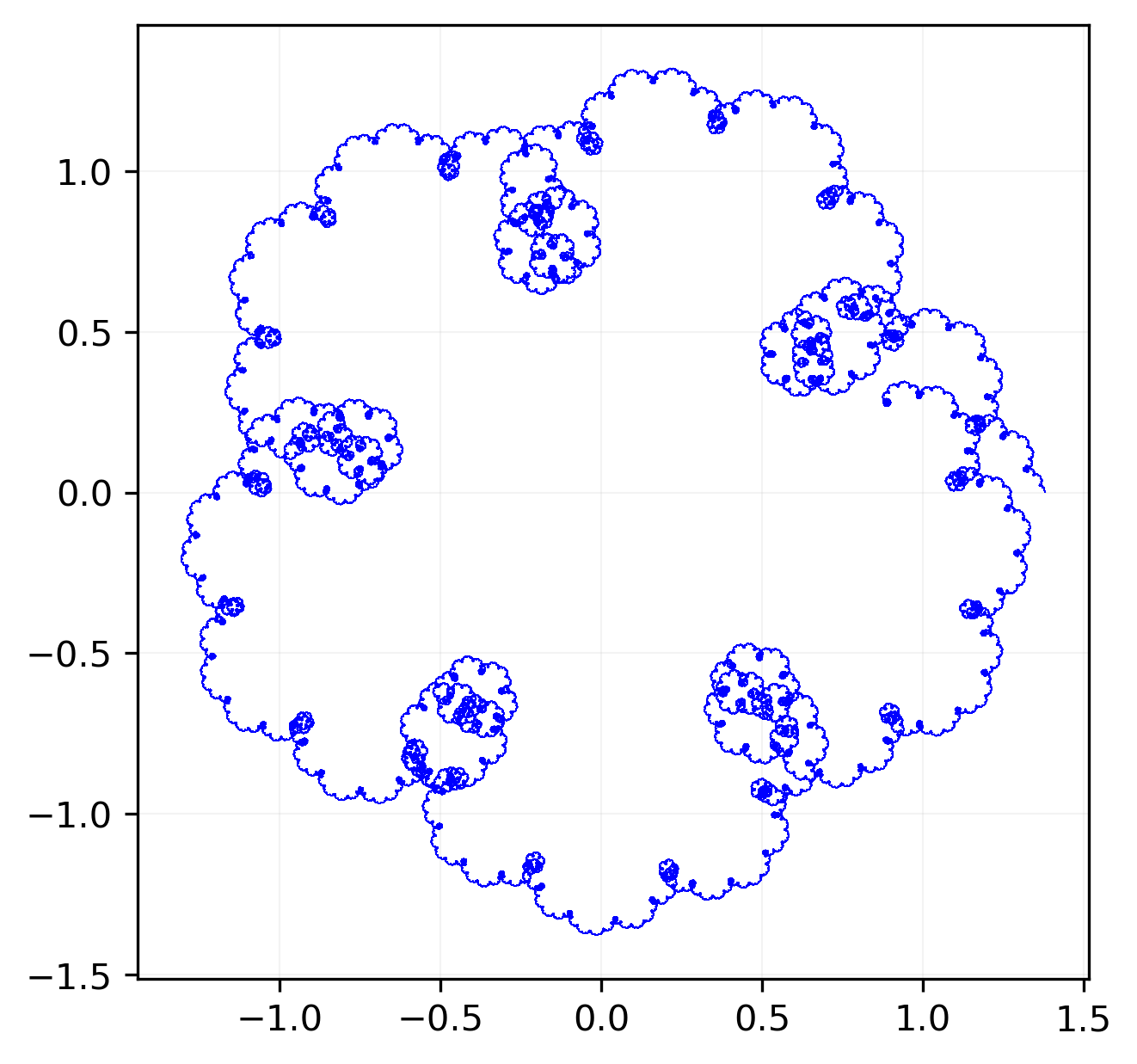}
\caption*{$\lambda=2\pi,\beta=0.7$}
\end{subfigure}

\begin{subfigure}{0.32\textwidth}
\centering\includegraphics[height=3.5cm]{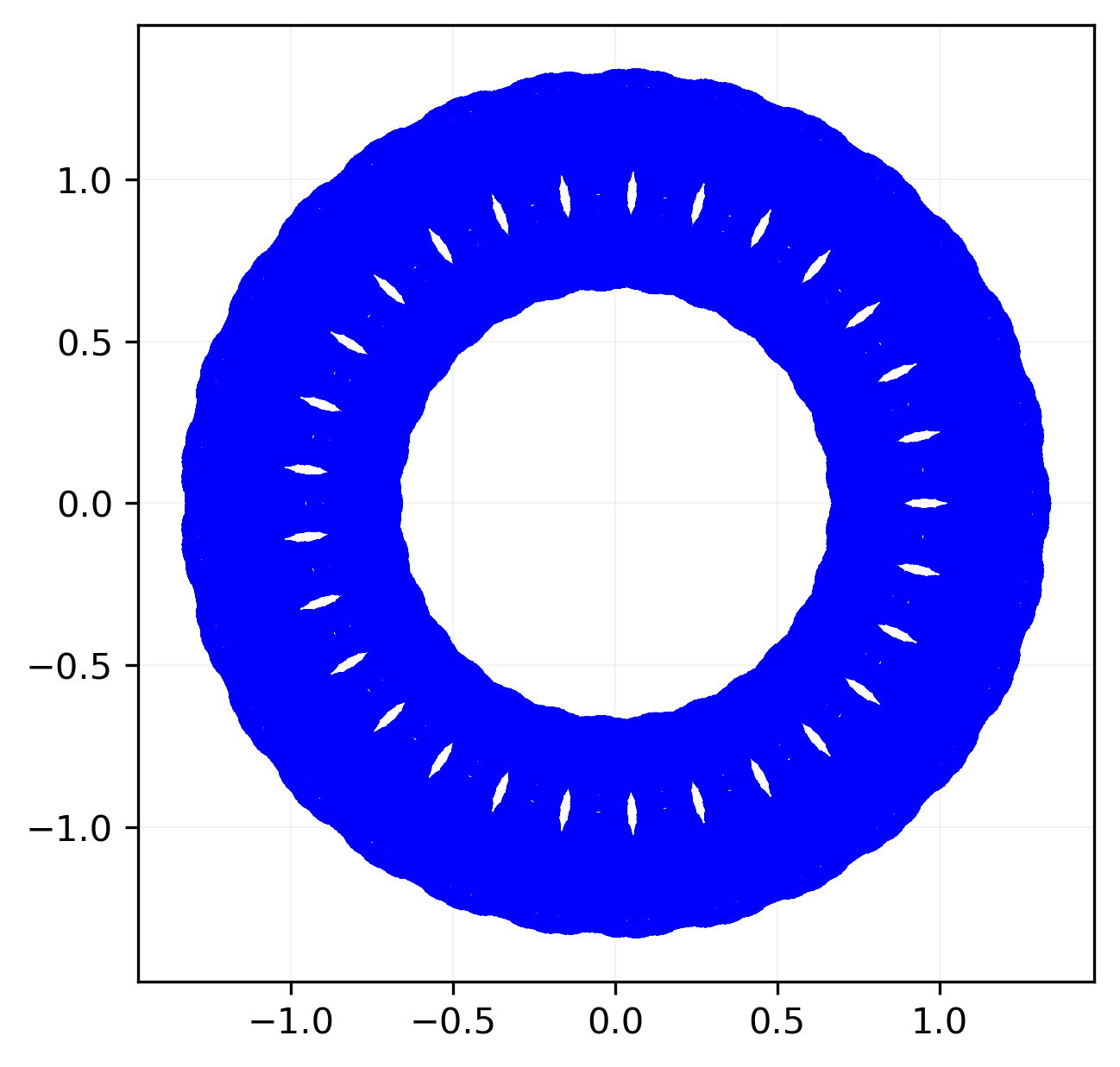}
\caption*{$\lambda=30,\beta=0.4$}
\end{subfigure}
\hfill%
\begin{subfigure}{0.32\textwidth}
\centering\includegraphics[height=3.5cm]{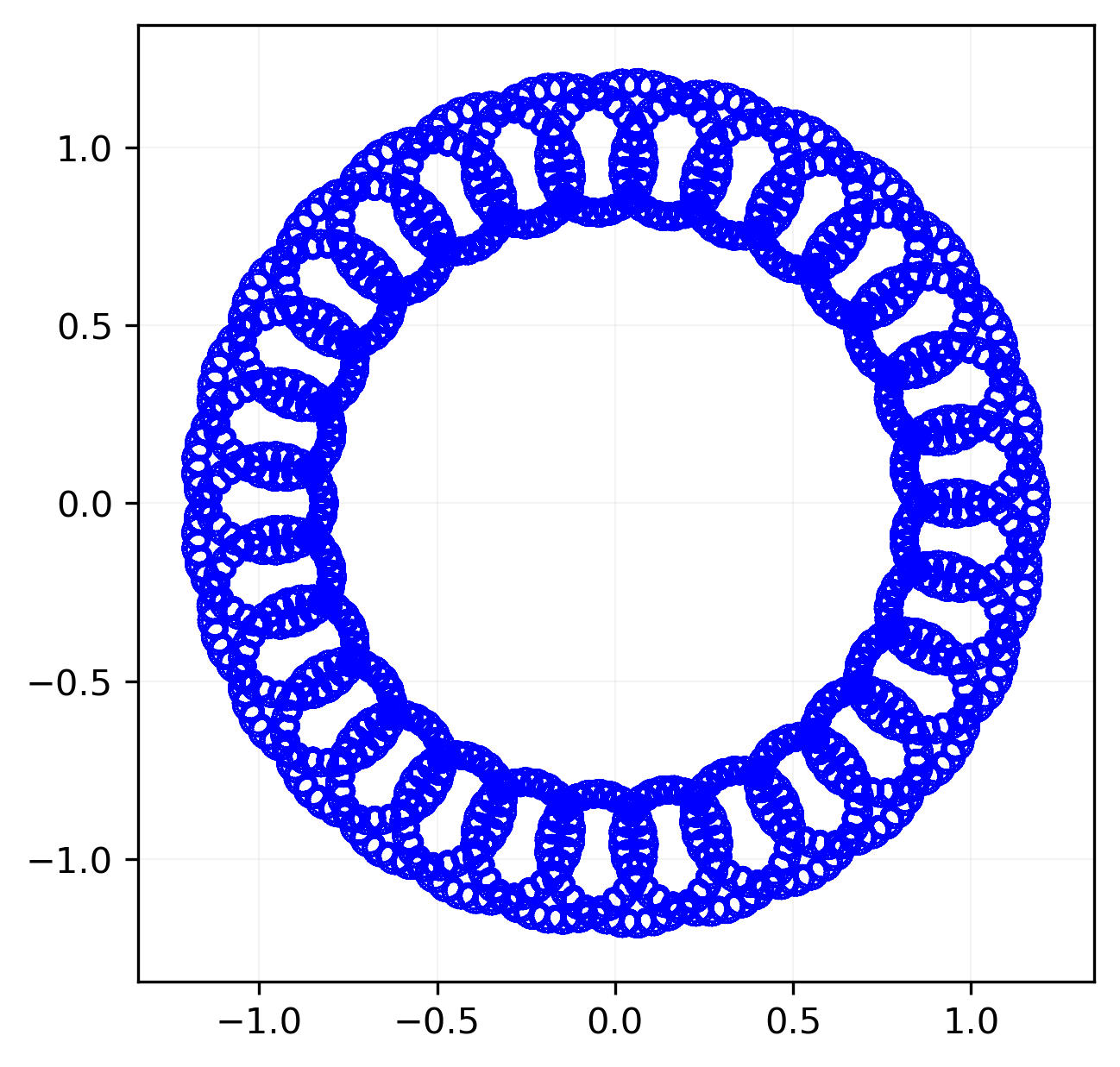}
\caption*{$\lambda=30,\beta=0.5$}
\end{subfigure}
s\hfill%
\begin{subfigure}{0.32\textwidth}
\centering\includegraphics[height=3.5cm]{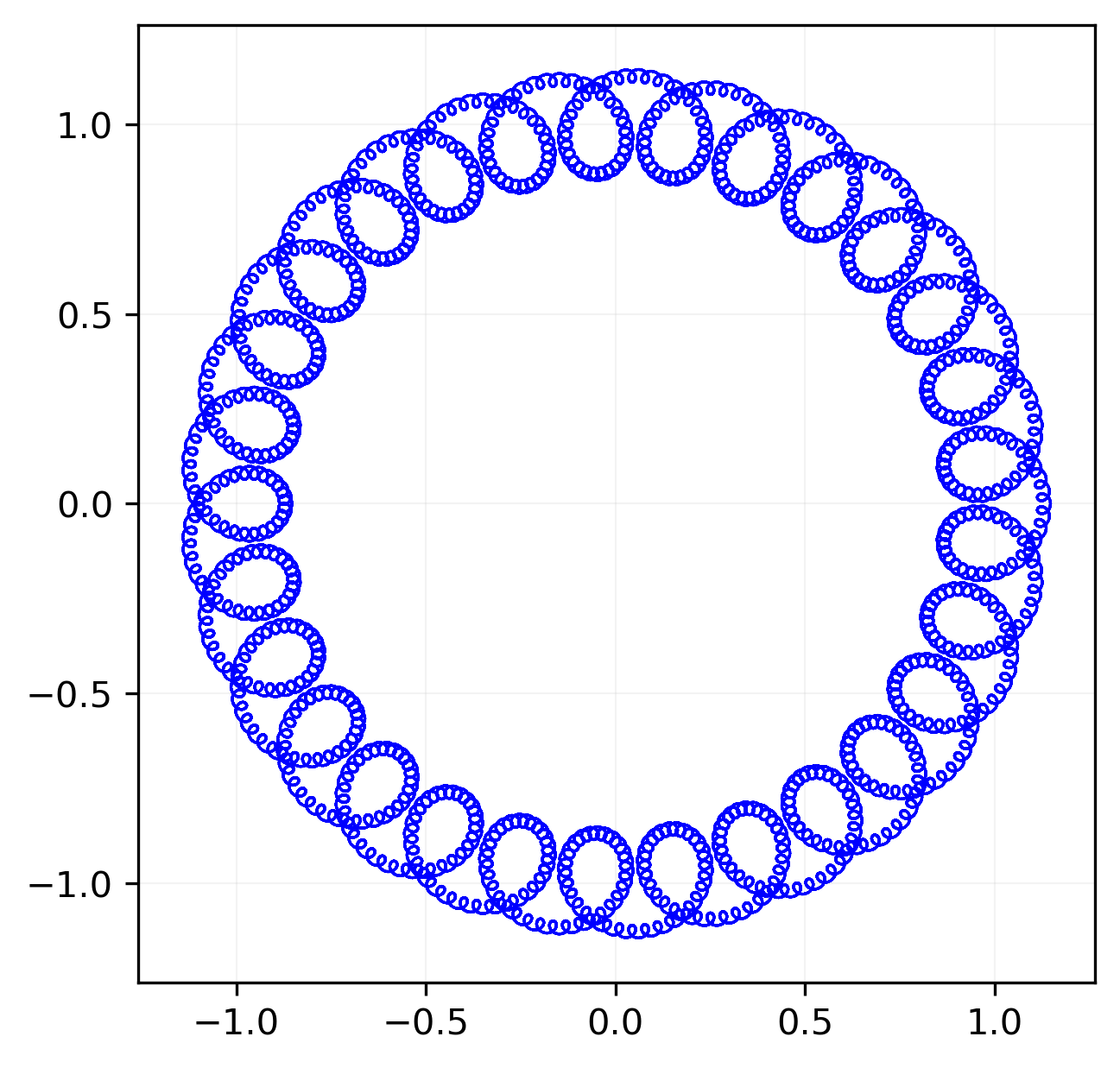}
\caption*{$\lambda=30,\beta=0.6$}
\end{subfigure}
\caption{Curves $W_{\beta, \lambda}([0,1])$ for different $\beta$ and $\lambda$}\label{fig1}
\end{figure}

\begin{figure}[ht]
\begin{subfigure}{0.32\textwidth}
\centering\includegraphics[height=3.5cm]{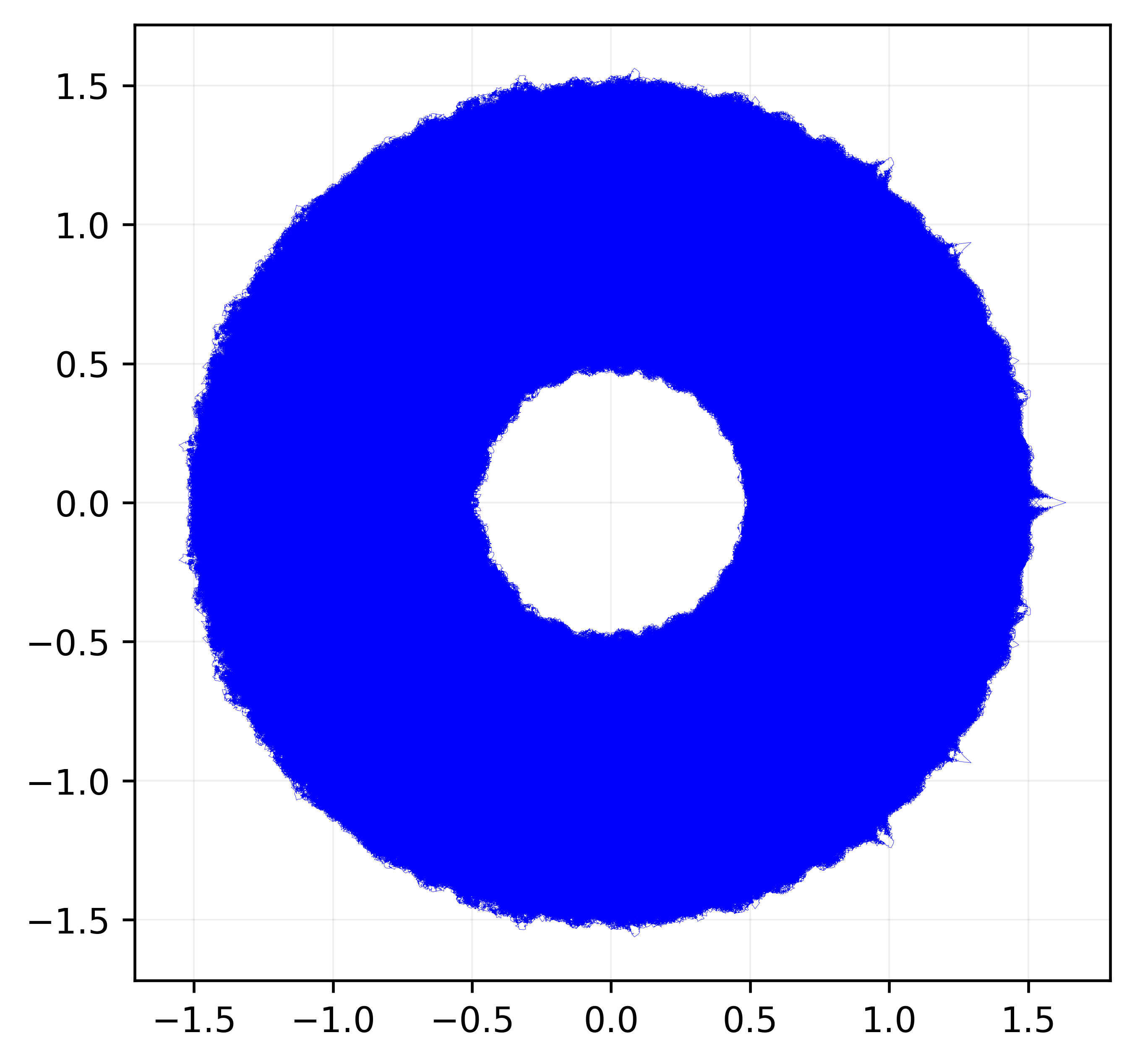}
\caption*{$a=6,b=2,\tau=0.25$}
\end{subfigure}
\hfill%
\begin{subfigure}{0.32\textwidth}
\centering\includegraphics[height=3.5cm]{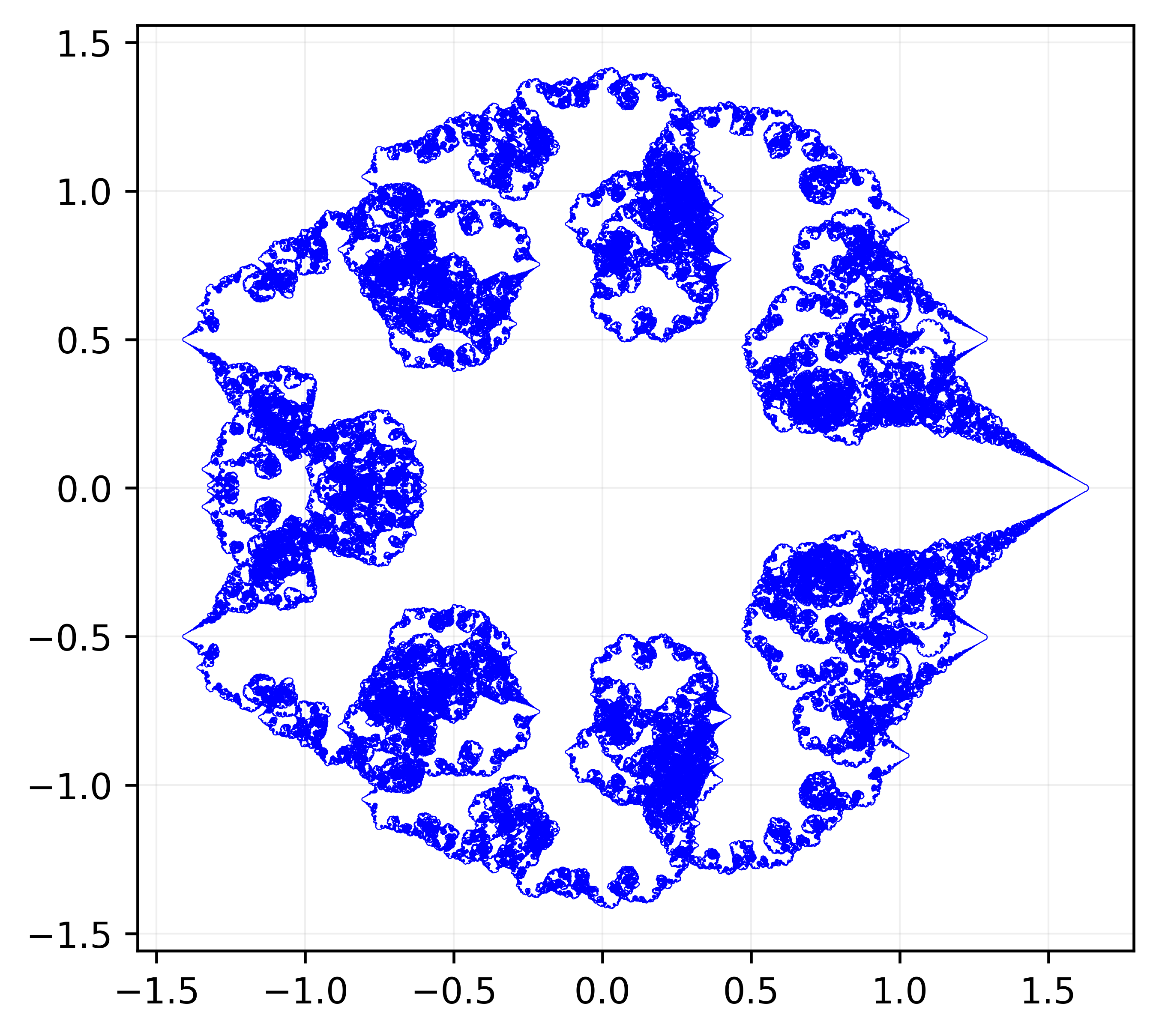}
\caption*{$a=3,b=2,\tau=0.5$}
\end{subfigure}
\hfill%
\begin{subfigure}{0.32\textwidth}
\centering\includegraphics[height=3.5cm]{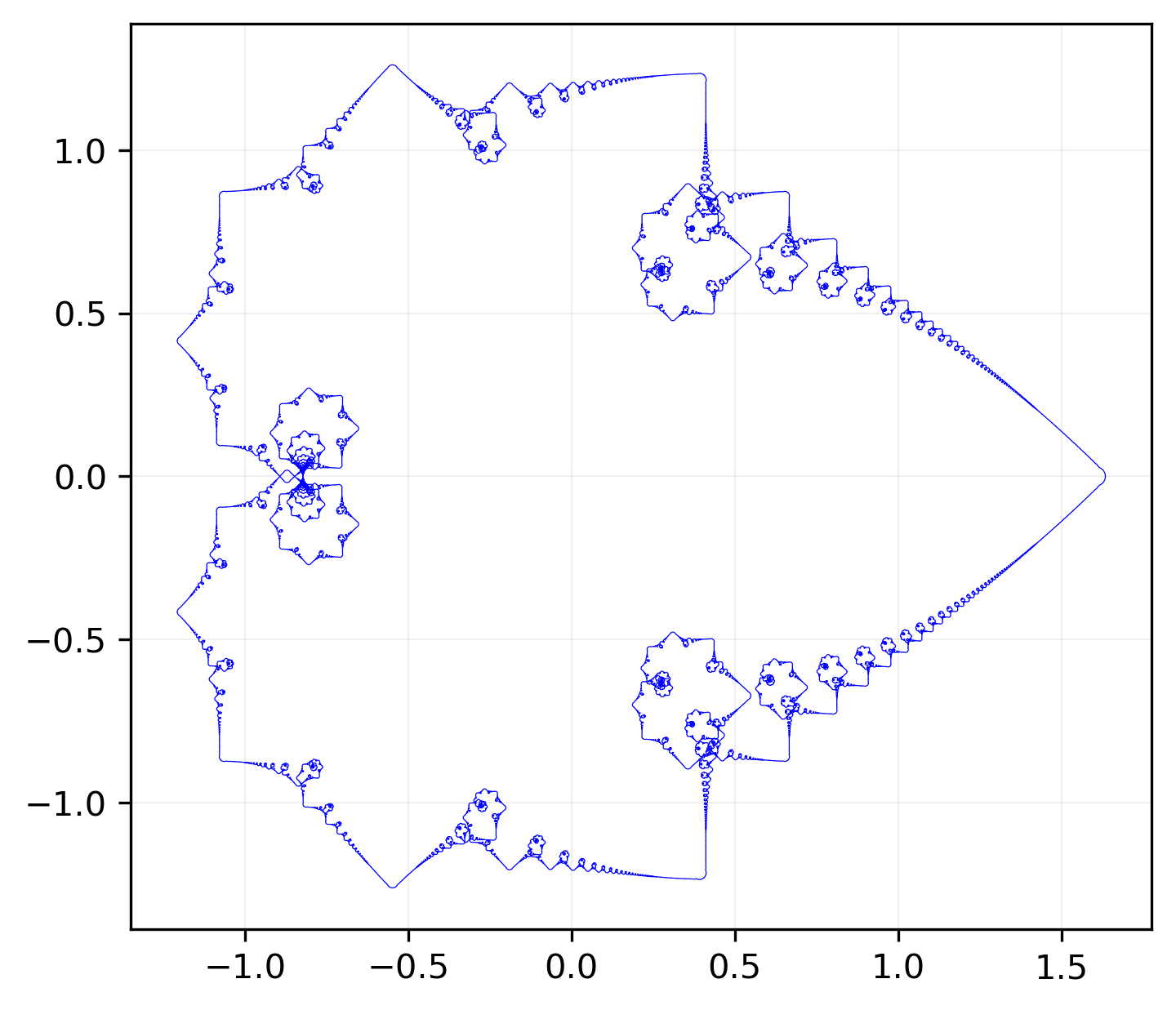}
\caption*{$a=2,b=2,\tau=0.75$}
\end{subfigure}
\caption{Curves $R_{a,b}([0, 1])$ for different $a$ and $b$.}\label{fig2}
\end{figure}

One notable result was due to Salem and Zygmund \cite{SZ}, who discovered the space-filling behavior of lacunary series. They proved that,  under certain conditions, the Hadamard gap series $\sum_{k=1}^{\infty}a_k z^{n_k}$ ($n_{k+1}/n_k\geq \lambda>1$) on $\partial\mathbb{D}$ is space-filling. A {\bf space-filling curve} is a continuous curve from $\R$ to $\R^d$ ($d>1$) whose image contains an open set. Classic examples include the Peano curve and the Hilbert curve, but Salem and Zygmund's work was the first to provide an explicit {\it analytic} construction of a space-filling curve. Kahane, Weiss, and Weiss \cite{KWW}  improved the result of Salem and Zygmund and proved that there exists a Cantor set on which the Hadamard gap series is space-filling. Schaeffer \cite{Schaeffer} proved that, under certain conditions, the image of $\partial\mathbb{D}$ under the Hadamard gap series  coincides with the image of the closed unit disk $\overline{\mathbb{D}}$. Their results also hold for the complex Weierstrass function $W_{\beta, \lambda}$ when $\beta$ is sufficiently small; Bara\'{n}ski \cite{Bar} also obtained this space-filling behavior.  However, their work did not specify how small $\beta$ should be. Belov \cite{Belov} answered this question and proved that if $\lambda>\lambda_0$ and $\beta\in(0, 1/2)$  satisfy $\lambda_0^{1-2\beta}=\pi+\lambda_0^{-\beta}$, then $W_{\beta, \lambda}$ is space-filling on every sufficiently long arc on $\partial\mathbb{D}$.

\subsection{Hausdorff dimension.} Apart from the topological properties,  Hausdorff dimension of the sets related to the Weierstrass functions has well-received attentions since the birth of the modern fractal geometry after Mandelbrot. It has long been conjectured that $2-\beta$ is the Hausdorff dimension of the graph of the real-valued Weierstrass functions $\mathrm{Re}\, W_{\beta, \lambda}$ and $\mathrm{Im}\, W_{\beta, \lambda}$. This is the natural upper bound obtained from the optimal H\"{o}lder exponents of the Weierstrass functions \cite{Fal}.   Hunt \cite{Hunt} confirmed this dimension probabilistically via the random phases.  Bara\'{n}ski,  B\'{a}r\'{a}ny, and Romanowska \cite{BBR} confirmed the conjecture  for integer $\lambda $ and  certain small values of $ \beta$. A further breakthrough was achieved by Shen \cite{Shen}, who completely resolved the problem for all $\beta\in (0,1)$ and integer $\lambda$. Subsequently, in joint work with Ren \cite{RS2021,RS2024}, the theory was extended to higher-dimensional Weierstrass-type series. More precisely, they showed that the Hausdorff dimension of the graph of the deterministic high-dimensional Weierstrass-type function with $\phi$ real-analytic and periodic,
\[
W^{\phi}_{\beta, \lambda}(x)=\sum_{n=1}^{\infty}\lambda^{-\beta n}\phi(\lambda^nx),
\]
under the assumption that $\lambda\geq 2$ is an integer, exhibits a dichotomy: it is either $C^k$  or not Lipschitz. The dimension of its graph (over $[0, 1]$) is given by
\begin{equation}\label{eq1.1}
\bd \mathrm{graph}\, W^{\phi}_{\beta, \lambda}=\hd \mathrm{graph}\, W^{\phi}_{\beta, \lambda}=\min\Bigl\{\frac{1}{\beta}, 1+(d-q)(1-\beta)\Bigr\},
\end{equation}
where the integer $q\in[0, d]$ measures the maximal number of smooth directions of $W^{\phi}_{\beta, \lambda}$. For example, in $\mathbb{R}^2$, if $\phi(x)=e^{2\pi ix}$, then $q=0$; if  $\phi(x)=e^{2\pi ix}-\lambda^{-\beta} e^{2\pi i\lambda x}$, then $q=2$ and
\begin{equation}\label{eq1.2}
W^{\phi}_{\beta, \lambda}=\sum_{n=1}^{\infty}\lambda^{-\beta n}(e^{2\pi i \lambda^n x}-\lambda^{-\beta}e^{2\pi i\lambda^{n+1}x})=\lambda^{-\beta}e^{2\pi  i\lambda x}.
\end{equation}
In this case, $W^{\phi}_{\beta, \lambda}$ is smooth. The key tool in their proofs was to express the graph of the Weierstrass functions as an repeller for a expanding dynamical system, which allowed us to use the Ledrappier-Young's dimension theory \cite{Led,LY}.

To the best of our knowledge, the Hausdorff dimension of the graph of the real-valued Riemann functions Re $R_{a,b}$ and Im $R_{a,b}$ is still unknown.  Chamizo, C\'{o}rdoba, and Ubis  computed the box dimension of the real-valued Riemann functions \cite{CC1999,CU2007}. In particular, $\bd \mathrm{graph}\, \mathrm{Re}\,R_{2,2} =\bd \mathrm{graph}\, \mathrm{Im}\,R_{2,2}= 5/4$.

There are unfortunately much less results about the Hausdorff dimension of the image of the complex series, both in the Weierstrass and Riemann series, beyond the space-filling situation.  We are unaware of any dimension results when $\beta\geq 1/2$ (which will not be space-filling) for the Weierstrass functions, and the dynamical system approach does not seem to directly carry over.

 Riemann functions  was originally introduced as a purely analytic example exhibiting pathological regularity behavior.  Surprisingly,  it was discovered in \cite{DV} that the complex  Riemann function $R_{2,2}$ exhibits an unexpected geometric connection with vortex filament dynamics.  A  generalization of the Riemann function
\begin{equation}\label{eq1.3}
\phi(t)=\sum_{n\in\mathbb{Z}}\frac{e^{-4\pi^2 i n^2t}-1}{-4\pi^2 n^2}=-\frac{1}{2\pi^2}R_{2,2}(-2\pi t)+it+\frac{1}{12}
\end{equation}
accurately describes the trajectories of the corners of polygonal vortex filaments governed by the vortex filament equation. Eceizabarrena \cite{Ece} obtained an upper estimate of the Hausdorff dimension its image:
\begin{equation}\label{eq1.4}
    1\leq\hd\phi(\mathbb{R})\leq \frac{4}{3}.
\end{equation}

\subsection{Main Results.} Due to the lack of deterministic dimension results for the image of complex series, our main goal is to introduce a random model similar to Hunt \cite{Hunt}, where the randomness is introduced through random phases, to help us predict the correct value of the Hausdorff dimension.

It turns out that the random model can be studied in a much more general context.  We will replace the exponential function $e^{2\pi ix}$ with  a sequence of functions   $\phi_n:\mathbb{R}\to\mathbb{C}$  that is   {\bf uniformly bounded and locally uniformly bi-Lipschitz }, which means that there exist constants  $L \geq 1$ and $\delta>0$ such that $\sup_n\lVert\phi_n\rVert<\infty$ and
\begin{equation}\label{eq1.5}
L^{-1}|x-y|\leq |\phi_n(x)-\phi_n(y)|\leq L |x-y|,\quad |x-y|\leq \delta,\quad\forall \, n\in \N.
\end{equation}
 Let  $\{a_n\}\subset\mathbb{C}\setminus\{0\}$ with $\sum_{n=1}^{\infty}|a_n|<\infty$, and let $\{\lambda_n\}\subset\mathbb{R}$ be  an increasing  sequence satisfying
\begin{equation}\label{eq1.6}
\lambda_1=1,\quad \sup_n\frac{\lambda_{n+1}}{\lambda_n}:=q<\infty.
\end{equation}
Similar to Kahane \cite{Kahane},  we define the following quantities:
\begin{equation}\label{eq1.7}
s_k=\Bigl(\sum_{q^k\leq \lambda_n<q^{k+1}}|a_n|^2\Bigr)^{\frac{1}{2}}, \qquad \sigma=\liminf_{k\to\infty}\frac{-\log s_k}{k\log q},\qquad \tau=\limsup_{k\to\infty}\frac{-\log s_k}{k\log q}.
\end{equation}
The central object of this paper is the random  series
\begin{equation}\label{eq1.8}
S(x)=S_{\Theta}(x)=\sum_{n=1}^{\infty} a_nX_n\phi_n(\lambda_nx),\quad x\in \mathbb{R},
\end{equation}
where $\Theta=\{\theta_n\}$ is a sequence of independent uniform distributions on $[0, 1]$ ($\theta_n\overset{\mathrm{iid}}{\sim} U[0,1]$). The sequence $\{X_n=e^{2\pi i\theta_n}\}$ is also known as the {\bf Steinhaus sequence}. The following is our main result in this paper.

\begin{theorem}\label{thm1.1}
Suppose that $0<\sigma\leq \tau<\infty$. Then for any Borel set $A\subset\mathbb{R}$, $S(x)$ in (\ref{eq1.8}) has the following property:
\begin{enumerate}[(i)]
    \item almost surely,
\[
\min\Bigl\{2,\; \frac{\hd A}{\tau}\Bigr\}\leq \hd S(A)\leq \min\Bigl\{2,\; \frac{\hd A}{\min\{\sigma, 1\}}\Bigr\}.
\]
\item If $0<\tau<\frac{1}{2}\hd A$, then  $S(A)$ has positive two-dimensional Lebesgue measure a.s.
\item If $0<\tau<\frac{1}{4}\hd A$, then  $S (A)$ has interior a.s.
\end{enumerate}
Moreover,  if $\tau=0$ and  $\hd A>0$,  then $S(A)$ has interior a.s.
\end{theorem}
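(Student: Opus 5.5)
Upper and lower bounds are handled separately, and in both the key quantity is the increment $S(x)-S(y)$ at scale $|x-y|\approx q^{-k}$.

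\textbf{Upper bound.} Here we show that, almost surely, $S$ is $\gamma$-H\"older on compact sets for every $\gamma<\min\{\sigma,1\}$. Then $\hd S(A)\le \hd A/\gamma$, and $\hd S(A)\le 2$ holds trivially. Fix $|x-y|\approx q^{-k}$ and split the series at frequency $q^k$. For the low frequencies $\lambda_n<q^k$ we use the Lipschitz bound (\ref{eq1.5}) on $\phi_n$, valid when $\lambda_n|x-y|\le\delta$. For the high frequencies we use the uniform bound $\sup_n\lVert\phi_n\rVert$. Because the $X_n$ are Steinhaus, the sums over dyadic-in-$q$ blocks are subgaussian. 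A Salem--Zygmund/Kahane type maximal inequality on a grid of mesh $q^{-k}$ (polynomially many points in $q^k$) then gives, almost surely for large $k$, a bound on each block of order $s_j\sqrt{k}$. The increment is therefore bounded by
\[
\sum_{j<k} q^{j-k}s_j\sqrt{k}+\sum_{j\ge k}s_j\sqrt k\lesssim q^{-k\gamma'}
\]
for every $\gamma'<\min\{\sigma,1\}$. To pass from grid points to arbitrary points we use continuity (Lipschitz) of the partial sums. This is essentially Kahane's argument, and I expect it to be routine.

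\textbf{Lower bound and (ii).} Frostman's lemma gives, for $s<\hd A$, a measure $\mu$ on $A$ with finite $s$-energy. We push it forward to $\nu=S_*\mu$ and estimate $\mathbb{E}\,I_t(\nu)=\iint \mathbb{E}|S(x)-S(y)|^{-t}\,d\mu\,d\mu$ for $t<\min\{2,s/\tau\}$. By Fourier inversion in $\R^2$,
\[
\mathbb{E}|Z|^{-t}=c_t\int|\xi|^{t-2}\,\mathbb{E}e^{i\langle\xi,Z\rangle}\,d\xi .
\]
By independence, the characteristic function of $Z=S(x)-S(y)$ factors as $\prod_n J_0\bigl(|\xi||a_n||\phi_n(\lambda_nx)-\phi_n(\lambda_ny)|\bigr)$, since a rotation-invariant $X_n$ gives a Bessel function. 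Now pick $k$ with $q^{-k}\approx|x-y|$, choose $\eta$ small, and keep only the block $q^{k}\le\lambda_n<q^{k+1}$, or the nearby block where $\lambda_n|x-y|\le\delta$. There the lower Lipschitz bound gives $|\phi_n(\lambda_nx)-\phi_n(\lambda_ny)|\gtrsim\lambda_n|x-y|$, comparable to a constant. Using $|J_0(r)|\le e^{-cr^2}$ for small $r$ and $|J_0|\le1-c$ for larger $r$, the product is at most $\exp(-c|\xi|^2 s_k^2)$ in a range plus decay. This yields
\[
\mathbb{E}|Z|^{-t}\lesssim s_k^{-t}\approx|x-y|^{-t(\tau+\epsilon)},
\]
and hence $\mathbb{E}I_t(\nu)\lesssim I_{t(\tau+\epsilon)}(\mu)<\infty$. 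So $\hd S(A)\ge t$ almost surely. For (ii) we instead bound
\[
\mathbb{E}\int|\hat\nu(\xi)|^2\,d\xi=\iint\int \mathbb{E}e^{i\langle\xi,Z\rangle}\,d\xi\,d\mu\,d\mu ,
\]
where the inner integral is $\lesssim s_k^{-2}\approx|x-y|^{-2\tau-2\epsilon}$. This is finite when $2\tau<s$, so $\nu\in L^2$ and $S(A)$ has positive Lebesgue measure. The main obstacle is controlling the Bessel product uniformly: $J_0$ is not monotone and the relevant block has many terms of varying size. I would handle this by grouping the block terms and using $|J_0(r)|\le C r^{-1/2}$ for large $r$ to obtain integrable tails.

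\textbf{(iii) and $\tau=0$.} We show that $\nu$ has a continuous density, which makes its support interior nonempty. Since $S(A)$ need not be closed, we would apply this to a compact $K\subset A$ with $\hd K$ close to $\hd A$. It suffices to show $\mathbb{E}\bigl(\int|\hat\nu(\xi)|\,d\xi\bigr)^2<\infty$, or $\int|\xi|^{\epsilon}\,\mathbb{E}|\hat\nu(\xi)|^2\,d\xi<\infty$ together with a fourth-moment argument. The standard Kahane route estimates $\mathbb{E}|\hat\nu(\xi)|^4$ via a fourfold integral against $\mu^{\otimes4}$ of products of Bessel functions evaluated at differences. This produces the decay $|\hat\nu(\xi)|\lesssim|\xi|^{-s/(2\tau)+\epsilon}$ almost surely, for instance by Borel--Cantelli on a lattice of $\xi$, and this is integrable in $\R^2$ exactly when $s/(2\tau)>2$, i.e.\ $\tau<s/4$. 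When $\tau=0$ the decay exponent is arbitrarily large, so any $\hd A>0$ suffices.
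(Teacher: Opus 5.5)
Your architecture matches the paper's. The upper bound comes from a Hölder estimate. The lower bound comes from a Frostman measure, its push-forward, and the Bessel-product formula for the characteristic function of $S(x)-S(y)$, summed over shells $|x-y|\approx q^{-k}$.

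Your subgaussian grid argument is a legitimate substitute for the paper's Marcinkiewicz--Zygmund plus Kolmogorov--Chentsov route, with one adjustment. Pass from grid to continuum block by block: each block is a finite sum with Lipschitz constant $\lesssim q^{j}\sum_n|a_n|$, whereas the full tail $\sum_{j\ge k}$ has no usable Lipschitz or deterministic sup bound. The tail factor is then $\sqrt{j}$, not $\sqrt{k}$.

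\textbf{The gap: the Bessel-product estimate.} Keeping a single block cannot give the decay you need. The Gaussian bound $\exp(-c|\xi|^2s_k^2)$ holds only while $|\xi|\max_n\varphi_n(x,y)\lesssim 1$. Beyond that, a block is guaranteed only the factor $(1+c|\xi|^2s_k^2)^{-1/4}\approx(|\xi|s_k)^{-1/2}$. This is sharp when the block is a singleton, which is exactly the Weierstrass case $\lambda_n=\lambda^n$, $q=\lambda$. There
\[
\int_{|\xi|>s_k^{-1}}|\xi|^{t-2}(|\xi|s_k)^{-1/2}\,d\xi=\infty \quad\text{for every } t\ge 1/2 .
\]
So $\mathbb{E}|Z|^{-t}\lesssim s_k^{-t}$ does not follow for $t\in[1/2,2)$, and in (ii) your bound for $\int_{\R^2}|\psi_Z|\,d\xi$ is not even finite. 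No grouping inside one block can fix this.

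The missing idea is to use $\ell$ consecutive blocks $k-\ell\le j\le k-1$. For $\delta q^{-k-1}<|x-y|\le\delta q^{-k}$, every one of these blocks has $\delta q^{-\ell-1}\le\lambda_n|x-y|\le\delta$, so \eqref{eq1.5} gives $\varphi_n(x,y)\ge c_\ell|a_n|$. Combining $|J_0(u)|\le(1+u^2)^{-1/4}$ with $\prod(1+x_i)\ge 1+\sum x_i$ gives each block a factor $\lesssim (rs_j)^{-1/2}$, hence
\[
\prod_{j=k-\ell}^{k-1}\prod_{q^j\le\lambda_n<q^{j+1}}\bigl|J_0(2\pi r\varphi_n(x,y))\bigr|\lesssim_\epsilon q^{k\ell(\tau+\epsilon)/2}\,r^{-\ell/2}.
\]
Split the radial integral at $r=q^{k(\tau+\epsilon)}$. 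Whenever $\ell>2t$, this gives $\int_0^\infty r^{t-1}\prod_n|J_0|\,dr\lesssim q^{tk(\tau+\epsilon)}$, which is what your energy sum needs. Taking $\ell\ge 5$ also gives the density bound $\int_{\R^2}|\psi_Z|\lesssim|x-y|^{-2(\tau+\epsilon)}$ that your argument for (ii) requires.

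\textbf{Part (iii) and $\tau=0$.} With $\ell>2/(\tau+\epsilon)$, the same computation shows $\mathbb{E}\int|\xi|^{t-2}|\widehat{\nu}(\xi)|^2\,d\xi<\infty$ for every $t<\hd A/\tau$, including $t>4$. For $t>4$, Cauchy--Schwarz gives $\widehat{\nu}\in L^1$ almost surely, so $\nu$ has a continuous density. With $\nu$ carried by a compact $K\subset A$, as you note, the set where this density is positive is an open subset of $S(K)\subset S(A)$. This is how the paper obtains (iii) and the case $\tau=0$.

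Your fourth-moment route is unnecessary, and as sketched it does not work, for two reasons:
\begin{enumerate}
\item $\mathbb{E}|\widehat{\nu}(\xi)|^4$ involves the combinations $\phi_n(\lambda_nx_1)+\phi_n(\lambda_nx_2)-\phi_n(\lambda_nx_3)-\phi_n(\lambda_nx_4)$, and the two-point bi-Lipschitz hypothesis gives no lower bound for these.
\item A fixed fourth moment combined with Borel--Cantelli on a lattice of $\xi$ loses in the exponent. It yields roughly $\tau<s/5$ rather than $\tau<s/4$.
\end{enumerate}
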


One can also randomize the coefficients using Gaussian distributions. A special case of this is the  Gaussian Fourier series (see Kahane \cite[Chapter 14]{Kahane}, in which a similar result to (i) was obtained. As Gaussian model changes to size of all the coefficients, Steinhaus model appears to be more natural to reflect on the possible values on the deterministic case.

Let $G_S( A)=\{(x, S(x))\colon x\in A\}\subset \R^3$ be the graph of the random complex series $S(x)$ in \eqref{eq1.8} over $A$. We also obtain the Hausdorff dimension of its graphs.
\begin{theorem}\label{thm1.2}
If $0< \sigma\leq \tau\leq 1$, for any Borel set $A\subset\mathbb{R}$, then almost surely,
\[
\min\Bigl\{\frac{\hd A}{\tau},\; \hd A+2-2\tau\Bigr\}\leq \hd G_S( A)\leq\min\Bigl\{\frac{\hd A}{\sigma},\; \hd A+2-2\sigma\Bigr\}.
\]
Moreover, if $\tau=0$ and $\hd A>0$, then $\hd G_S( A)=2+\hd A$ a.s.
\end{theorem}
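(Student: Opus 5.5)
The plan has two halves. The upper bound comes from almost sure Hölder regularity of $S$ together with a covering argument. The lower bound comes from the energy (potential-theoretic) method, applied to the graph map $x\mapsto (x,S(x))$.

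\emph{Upper bound.} First, for every $\gamma<\sigma$, $S$ is almost surely $\gamma$-Hölder on bounded intervals. To see this, fix $x,y$ with $|x-y|\approx q^{-m}$ and split the series at $\lambda_n\approx q^m$. On blocks with $\lambda_n<q^m$, the Lipschitz bound \eqref{eq1.5} gives $|\phi_n(\lambda_nx)-\phi_n(\lambda_ny)|\le L\lambda_n|x-y|$, and uniform boundedness handles the high blocks. Steinhaus sums are subgaussian, so a Salem--Zygmund/Kahane-type estimate applied on a dyadic net and followed by Borel--Cantelli gives
\[
|S(x)-S(y)|\lesssim \sqrt{m}\Bigl(\sum_{k\le m} s_k q^{k-m}+\sum_{k>m}s_k\Bigr).
\]
Since $s_k\le q^{-k(\sigma-\epsilon)}$ for large $k$, this is $\lesssim |x-y|^{\min\{\sigma,1\}-\epsilon}$, up to a logarithmic factor when $\sigma\ge1$. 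The regularity yields two coverings of $G_S(A)$:
\begin{itemize}
\item Using the Hölder map, a $\delta$-cover of $A$ by intervals $I_j$ maps to a cover by sets of diameter $\lesssim|I_j|^{\gamma}$, which gives $\hd G_S(A)\le \hd A/\gamma$.
\item Each piece $G_S(A\cap I_j)$ sits in a box of size $|I_j|\times|I_j|^{\gamma}\times|I_j|^{\gamma}$, which can be covered by $|I_j|^{2(\gamma-1)}$ cubes of side $|I_j|$. With $s>\hd A$ and a cover satisfying $\sum|I_j|^s$ small, this gives $\hd G_S(A)\le \hd A+2-2\gamma$.
\end{itemize}
Letting $\gamma\uparrow\sigma$ gives the upper bound in the theorem.

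\emph{Lower bound.} Take $t<\hd A$ and a Frostman measure $\mu$ on a compact subset of $A$ with $I_t(\mu)<\infty$, and push it forward to $\nu=(\mathrm{id},S)_*\mu$. Fix $s<\min\{t/\tau,\ t+2-2\tau\}$. It suffices to show
\[
\mathbb{E}\iint \bigl(|x-y|^2+|S(x)-S(y)|^2\bigr)^{-s/2}\,d\mu\,d\mu<\infty .
\]
The key input is a small-ball estimate for the increment. For $|x-y|\approx q^{-m}$, choose a block $k\approx m$ in which, by the bi-Lipschitz condition, $|\phi_n(\lambda_nx)-\phi_n(\lambda_ny)|\gtrsim 1$ for indices with $\lambda_n|x-y|\approx\delta$. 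Conditioning on all other phases, the increment $S(x)-S(y)$ is a fixed vector plus $\sum_{n\in\text{block}}a_nX_n c_n$ with $|c_n|\gtrsim1$. Its characteristic function is a product of Bessel functions $J_0(2\pi|a_nc_n\xi|)$. Using $|J_0(r)|\lesssim r^{-1/2}$ together with a Littlewood--Offord style anticoncentration argument, one gets
\[
\mathbb{P}\bigl(|S(x)-S(y)|<r\bigr)\lesssim \min\Bigl\{1,\ \bigl(r/\rho(|x-y|)\bigr)^2\Bigr\},\qquad \rho(h)\approx h^{\tau+\epsilon},
\]
where $\rho(|x-y|)\approx s_k$ and $s_k\ge q^{-k(\tau+\epsilon)}$. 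Integrating in $r$ bounds the expected kernel by
\[
|x-y|^{-s} \quad\text{if } s<2, \qquad\text{or}\qquad \rho^{-2}|x-y|^{2-s}\approx |x-y|^{-(2\tau+s-2)}\quad\text{if } s>2 .
\]
This is $\mu\times\mu$-integrable exactly when $s<t+2-2\tau$. In the regime $s\le 2$ the comparison with $t/\tau$ enters through $\rho\ge |x-y|^{\tau}$, using the alternative bound $\min\{|x-y|,\rho\}^{-s}$. The case $\tau=0$ follows by letting $\tau\to0$ in the same argument.

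\emph{Main obstacle.} The hardest step is the small-ball estimate with exponent $2$. A single block may contain few terms, and a single Steinhaus variable only gives the weak density bound coming from $J_0$ decay. I would first try to use the $\sup\lambda_{n+1}/\lambda_n=q$ condition to group a bounded number of consecutive blocks. The Bessel product then has enough factors of size $\gtrsim s_k$ to make the characteristic function integrable, which gives a bounded density on scale $\rho$. If that fails, I would settle for the weaker exponent from one variable and iterate over two independent blocks.
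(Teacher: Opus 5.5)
Your plan is correct in outline, but it organizes the lower bound differently from the paper, and two points need fixing.

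\textbf{Comparison with the paper.} The upper bound is the paper's argument in substance: almost sure H\"older continuity of every order $\gamma<\sigma$, followed by the two standard coverings. You get the H\"older property from subgaussian tails on a net plus Borel--Cantelli instead of Kolmogorov--Chentsov with Marcinkiewicz--Zygmund; that is equally valid.

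For the lower bound, the paper splits into two cases:
\begin{enumerate}[(a)]
\item for $\hd A\le 2\tau$, it uses the Lipschitz projection $G_S(A)\to S(A)$ and the Sobolev-dimension bound behind Theorem~\ref{thm1.1} (Theorem~\ref{thm3.4});
\item for $\hd A>2\tau$, it uses the density bound $\|f_{x,y}\|_\infty\lesssim_\epsilon|x-y|^{-2(\tau+\epsilon)}$ of Proposition~\ref{prop4.2} in the graph energy with $s>2$.
\end{enumerate}
You drive both regimes from that single small-ball (bounded-density) estimate. This is legitimate and more self-contained. For $s<2$ the layer-cake formula gives $\mathbb{E}|S(x)-S(y)|^{-s}\lesssim_s\rho^{-s}\lesssim|x-y|^{-s(\tau+\epsilon)}$. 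That is integrable against a Frostman measure of exponent $\alpha$ when $s<\alpha/(\tau+\epsilon)$. As a by-product you even get $\hd S(A)\ge\min\{2,\hd A/\tau\}$ without passing through the Sobolev dimension. The paper's split instead recycles Theorem~\ref{thm3.4}, which it needs anyway for the positive-measure and interior statements.

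\textbf{First fix: the $s\le 2$ step.} As written it fails. Since $\rho\gtrsim|x-y|^{\tau+\epsilon}\gg|x-y|$ for $\tau<1$, your bound $\min\{|x-y|,\rho\}^{-s}$ is just the trivial $|x-y|^{-s}$ and only recovers $\hd A$. You need $\max\{|x-y|,\rho\}^{-s}\le\rho^{-s}$. The relevant inequality is $\rho\gtrsim|x-y|^{\tau+\epsilon}$, not $\rho\ge|x-y|^{\tau}$.

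\textbf{Second fix: the ``main obstacle''.} This is exactly the heart of the paper (Lemma~\ref{lemma3.3} and Proposition~\ref{prop4.2}). Your first suggestion is the right one, but the ingredient that makes it work is missing.

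The per-factor bound $|J_0(u)|\lesssim u^{-1/2}$ does not produce ``factors of size $\gtrsim s_k$'' when a block contains many small coefficients. For $R_{a,b}$ one has $|a_n|\approx q^{-kb/a}\ll s_k\approx q^{-k(2b-1)/(2a)}$, so this route would effectively replace $\tau$ by $b/a$.

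What works is the global bound $|J_0(u)|\le(1+u^2)^{-1/4}$ combined with $\prod_n(1+x_n)\ge 1+\sum_n x_n$. After restricting to $\mathrm{diam}\,A\le\delta q^{-5}$, the bi-Lipschitz condition gives $\varphi_n(x,y)\gtrsim|a_n|$ on the blocks $k-5\le j\le k-1$. Hence, uniformly in the cardinality of each block,
\[
\prod_{j=k-5}^{k-1}\ \prod_{q^j\le\lambda_n<q^{j+1}}|J_0(2\pi r\varphi_n(x,y))|\lesssim\prod_{j=k-5}^{k-1}(r s_j)^{-1/2}\lesssim_\epsilon q^{\frac{5k(\tau+\epsilon)}{2}}r^{-\frac52}.
\]
This is integrable against $r\,dr$ beyond $r=q^{k(\tau+\epsilon)}$. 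So $\|\psi_{x,y}\|_{L^1(\mathbb{R}^2)}\lesssim_\epsilon q^{2k(\tau+\epsilon)}\approx|x-y|^{-2(\tau+\epsilon)}$, and Fourier inversion gives the bounded density. No Littlewood--Offord argument is needed.

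Five blocks is the minimum for this route: four blocks give only $r^{-2}$, which is not integrable against $r\,dr$. So a fallback using two blocks would not yield a bounded density this way.
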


We will now use these results to obtain our dimension results for the Weierstrass and Riemann functions. To the best of our knowledge, existing studies (random and deterministic) on the Hausdorff dimension of graphs of lacunary series are confined to the Weierstrass-type series or Hadamard gap series ($\inf_n \lambda_{n+1}/\lambda_n>1$). Our condition \eqref{eq1.6} allows us to  include also  the   Riemann-type function, in which $\lim_{n\to\infty}\lambda_{n+1}/\lambda_n= 1$.

\noindent{\bf (A). Weierstrass functions.} Our randomized Weierstrass function given by the Steinhaus sequence is defined as
\begin{equation}\label{eq1.9}
W_{\beta,\lambda,\Theta}(x)=\sum_{n=1}^{\infty}\lambda^{-\beta n}  e^{2\pi i(\lambda^{n}x+\theta_n)},\quad \beta\in(0, 1],\;\; \lambda>1,
\end{equation}
where $\theta_n\overset{\mathrm{iid}}{\sim} U[0,1]$.  Then a direct calculation of (\ref{eq1.7}) gives us $s_k=\lambda^{-\beta k}$ and $\sigma=\tau=\beta$. This gives us
\begin{corollary}\label{coro1.3}
Let $0< \beta\leq 1$  and $\lambda>1$.    For any Borel set $A\subset\mathbb{R}$, then
\begin{enumerate}[(i)]
\item almost surely,
\begin{equation} \label{eq1.10}
\begin{aligned}
\hd W_{\beta, \lambda,\Theta}(A)&=\min\left\{2,\; \frac{\hd A}{\beta}\right\},\\
\hd G_{W_{\beta, \lambda,\Theta}}( A)&=\min\Bigl\{\frac{\hd A}{\beta},\; \hd A+2-2\beta\Bigr\}.
\end{aligned}
\end{equation}
\item If  $\beta<\frac{1}{2}\hd A$, then $W_{\beta, \lambda,\Theta}(A)$ has positive two-dimensional Lebesgue measure a.s.
\item If $\beta<\frac{1}{4}\hd A$, then $ W_{\beta, \lambda,\Theta}(A)$ has an interior a.s.
\item If $\bd A=\hd A$, then \eqref{eq1.10} also holds for the box-counting dimension.
\end{enumerate}
\end{corollary}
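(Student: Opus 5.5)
Corollary~\ref{coro1.3} will follow from Theorems~\ref{thm1.1} and~\ref{thm1.2} once we place $W_{\beta,\lambda,\Theta}$ in the framework of \eqref{eq1.8}. We take $a_n=\lambda^{-\beta n}$, $\lambda_n=\lambda^{n-1}$ (rescaling $x$ by $\lambda$ is a bi-Lipschitz change of variable and does not affect dimensions), $X_n=e^{2\pi i\theta_n}$, and $\phi_n(x)=e^{2\pi i\lambda x}$. These $\phi_n$ are uniformly bounded. They are locally uniformly bi-Lipschitz with $\delta=1/(4\lambda)$, because $|e^{2\pi i u}-e^{2\pi i v}|=2|\sin\pi(u-v)|$ is comparable to $|u-v|$ when $|u-v|\le 1/4$. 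We also have $\sum|a_n|<\infty$ and $\lambda_{n+1}/\lambda_n=\lambda$. Choosing $q=\lambda$ in \eqref{eq1.6}, each block $q^k\le\lambda_n<q^{k+1}$ contains exactly one index, namely $n=k+1$. Hence $s_k=\lambda^{-\beta(k+1)}$ and $-\log s_k/(k\log\lambda)\to\beta$, so $\sigma=\tau=\beta\in(0,1]$.

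For part (i), Theorem~\ref{thm1.1}(i) gives both bounds equal to $\min\{2,\hd A/\beta\}$, since $\min\{\sigma,1\}=\beta$. Theorem~\ref{thm1.2} applies because $0<\beta\le1$, and its two bounds coincide at $\min\{\hd A/\beta,\hd A+2-2\beta\}$. Parts (ii) and (iii) are exactly Theorem~\ref{thm1.1}(ii),(iii) with $\tau=\beta$.

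Part (iv) needs its own argument, since the theorems concern only Hausdorff dimension. The lower bound is free, because $\underline{\dim}_{\mathcal B}\ge\hd$ for any set. For the upper bound I would use the H\"older continuity of $W=W_{\beta,\lambda,\Theta}$. For every realization and every $|x-y|\le1$ with $\lambda^{-N-1}<|x-y|\le\lambda^{-N}$, split the series at $N$: the terms $n\le N$ contribute at most $\sum_{n\le N}2\pi\lambda^{(1-\beta)n}|x-y|$, and the terms $n>N$ contribute at most $\sum_{n>N}2\lambda^{-\beta n}$. This gives $|W(x)-W(y)|\le C|x-y|^\beta$ (with a logarithmic factor when $\beta=1$).

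I would then cover $A$ by $N(r)\approx r^{-\bd A}$ intervals of length $r$. Each image is a set of diameter $\lesssim r^\beta$ in $\mathbb C$, so it is covered by $O(1)$ balls of radius $r^\beta$. This gives $\overline{\dim}_{\mathcal B}W(A)\le \bd A/\beta$, and trivially the bound is at most $2$.

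For the graph, the part over one interval of length $r$ lies in a box $r\times r^\beta\times r^\beta$. This box is covered either by $r^{2\beta-2}$ cubes of side $r$, or by $O(1)$ cubes of side $r^\beta$. These yield the exponents $\bd A+2-2\beta$ and $\bd A/\beta$ respectively, so the upper box dimension is at most the minimum of the two. Combined with $\hd A=\bd A$ and part (i), the box dimensions equal the values in \eqref{eq1.10}.

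The only delicate point is the case $\beta=1$, where the logarithmic factor in the H\"older estimate must be absorbed into an arbitrarily small loss of exponent, $r^{1-\epsilon}$. Since $\epsilon$ is arbitrary, the bounds are unaffected, so I expect no real obstacle.
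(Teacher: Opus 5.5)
Your proposal is correct and follows the paper's route. You verify $\sigma=\tau=\beta$ and then invoke Theorems \ref{thm1.1} and \ref{thm1.2}, with (iv) coming from box dimension $\geq$ Hausdorff dimension plus H\"{o}lder-based upper bounds. Your normalization $\lambda_n=\lambda^{n-1}$, $\phi_n(x)=e^{2\pi i\lambda x}$ respects $\lambda_1=1$ and gives $s_k=\lambda^{-\beta(k+1)}$ rather than the paper's $s_k=\lambda^{-\beta k}$; this difference is immaterial. Apart from that, you only differ in proving the H\"{o}lder estimate for every realization and redoing the covering argument, whereas the paper simply cites Theorem \ref{thm2.3} (which already includes box dimensions) and the graph bound \eqref{eq4.2}.
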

The box dimension is no less than the Hausdorff dimension, and the upper bounds are both determined by the H\"{o}lder exponent. Therefore, (iv) follows immediately.

Another interesting application is towards the function  $\phi(x) \equiv e^{2\pi ix}-\lambda^{-\beta} e^{2\pi i\lambda x}\, (0<\beta<1,\lambda>1)$, which is  locally uniformly bi-Lipschitz. Without any randomization, (\ref{eq1.2}) shows that $W^{\phi}_{\beta,\lambda}$ is a smooth function, so its graph has Hausdorff dimension 1. Its associated random series is given by
\[
W^{\phi}_{\beta,\lambda,\Theta}(x)=\sum_{n=1}^{\infty}\lambda^{-\beta n}X_n\phi(\lambda^nx)
\]
where $X_n = e^{2\pi i \theta_n}$ is the Steinhaus sequence.  Theorem \ref{thm1.2} now shows that the dimension of the graph  of $W^{\phi}_{\beta,\lambda,\Theta}([0, 1])$ is $\min\{\beta^{-1}, 3-2\beta\}$ almost surely.  Compared to the deterministic case, randomization of the phases destroys the smoothness and let it exhibit the fractal properties.  Rotation destroys the smoothness of $W^{\phi}_{\beta, \lambda}$ even with deterministic rotation.  Figures \ref{fig3} and \ref{fig4} show the  curves $W^{\phi}_{\beta,\lambda,\Theta}([0, 1])$ for $\phi(x)=e^{2\pi ix}-\lambda^{-\beta} e^{2\pi i\lambda x}$ and different models $\Theta$.
\begin{figure}[ht]
\begin{subfigure}{0.45\textwidth}
\centering\includegraphics[height=3.5cm]{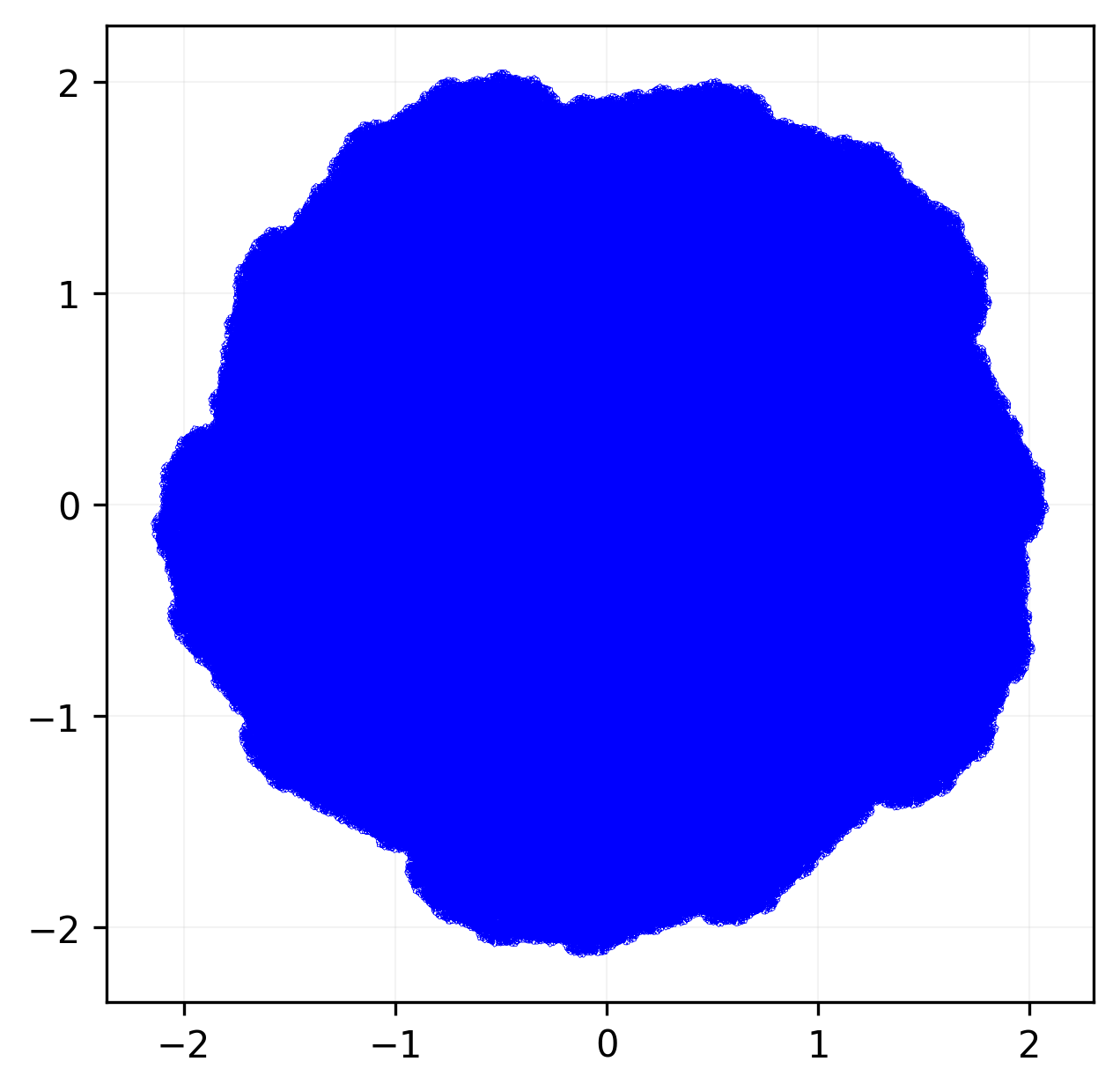}
\caption*{$\lambda=6,\beta=0.3$}
\end{subfigure}
\hfill%
\begin{subfigure}{0.45\textwidth}
\includegraphics[height=3.5cm]{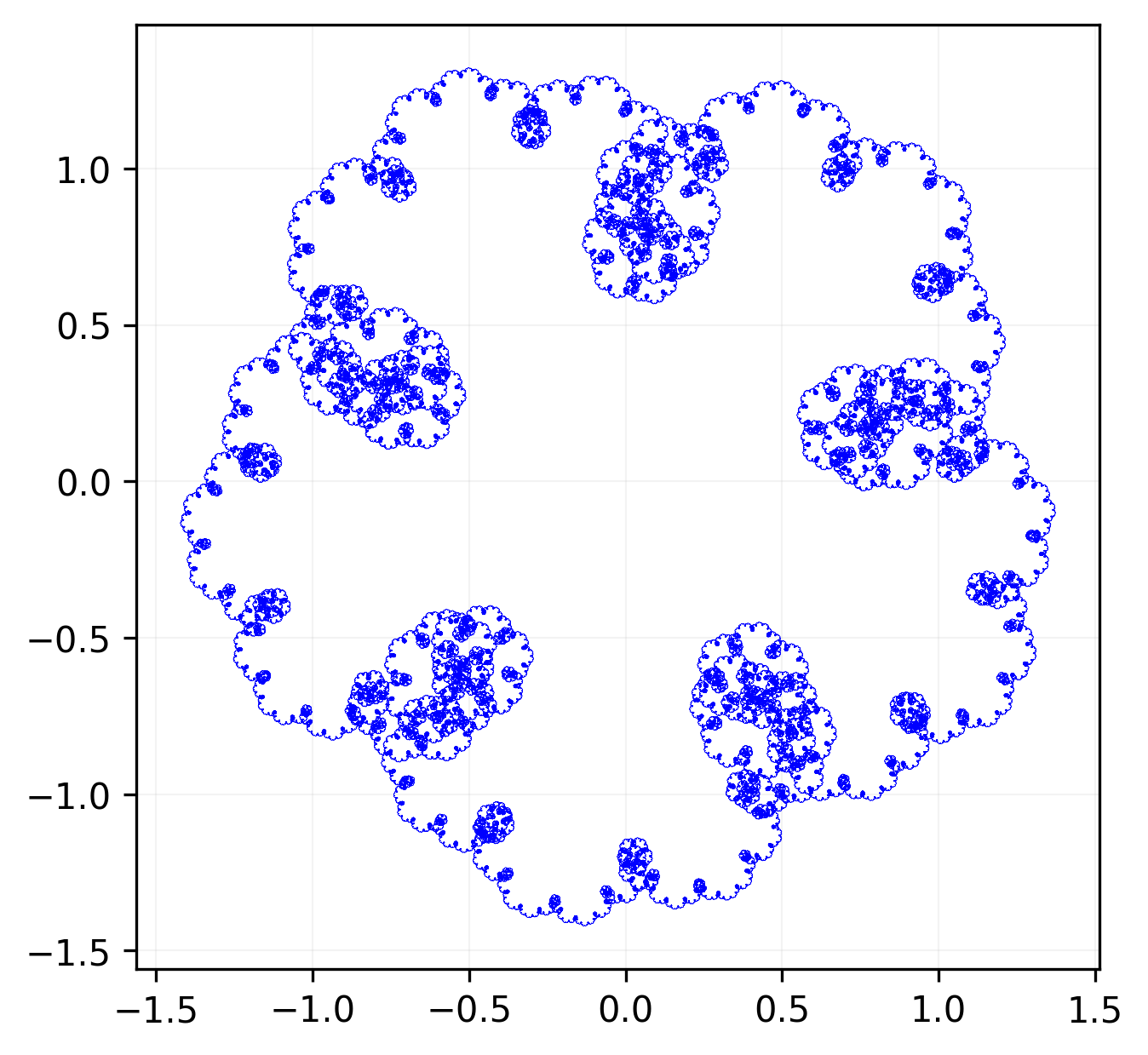}
\centering\caption*{$\lambda=6,\beta=0.6$}
\end{subfigure}
\caption{Curves $W^{\phi}_{\beta,\lambda,\Theta}([0, 1])$  for $\phi(x)=e^{2\pi ix}-\lambda^{-\beta} e^{2\pi i\lambda x}$ and an equidistributed sequence $\theta_n=n\pi\pmod 1$.}\label{fig3}
\end{figure}

\begin{figure}[ht]
\begin{subfigure}{0.45\textwidth}
\centering\includegraphics[height=3.5cm]{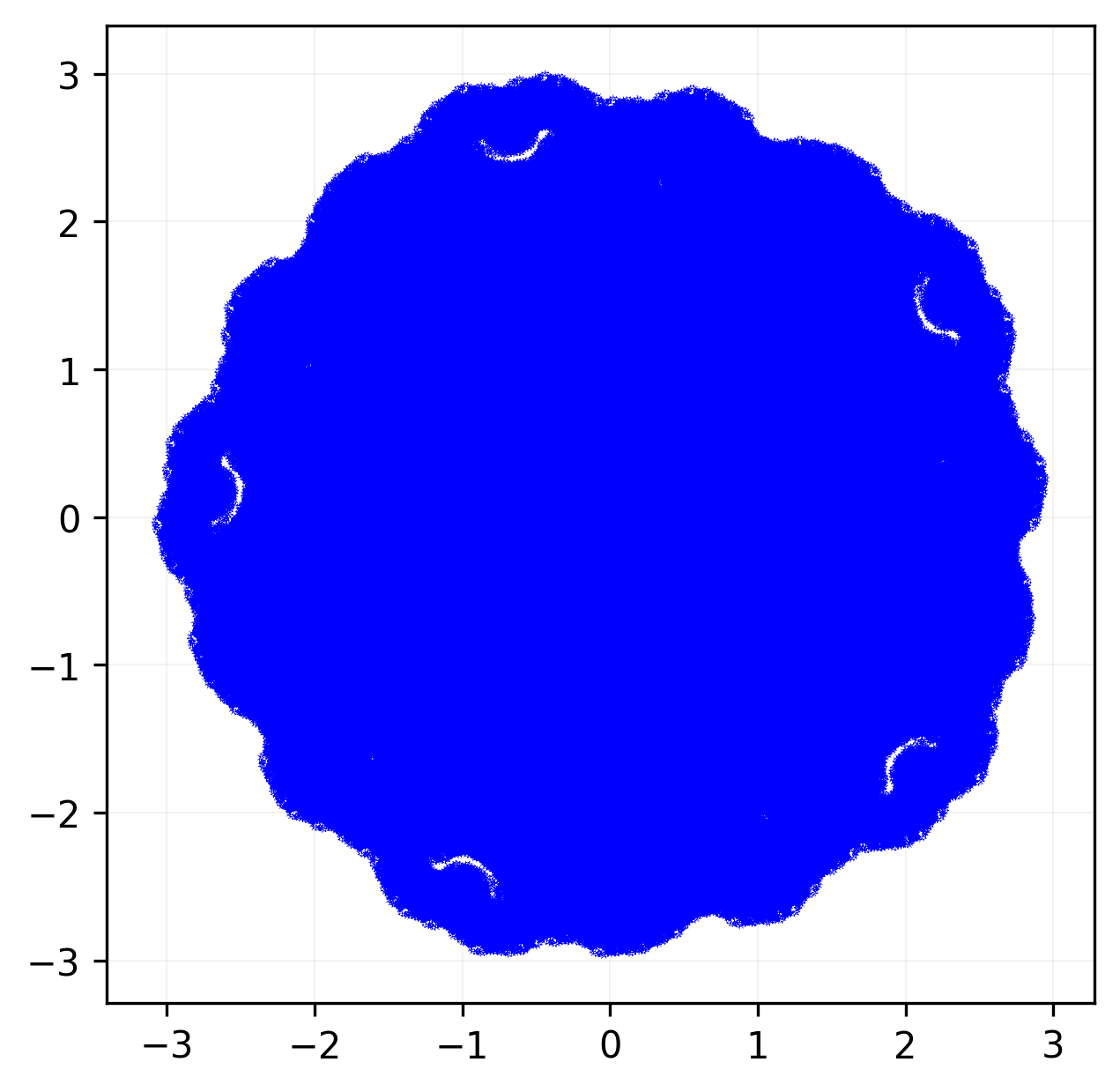}
\caption*{$\lambda=6,\beta=0.3$}
\end{subfigure}
\hfill%
\begin{subfigure}{0.45\textwidth}
\centering\includegraphics[height=3.5cm]{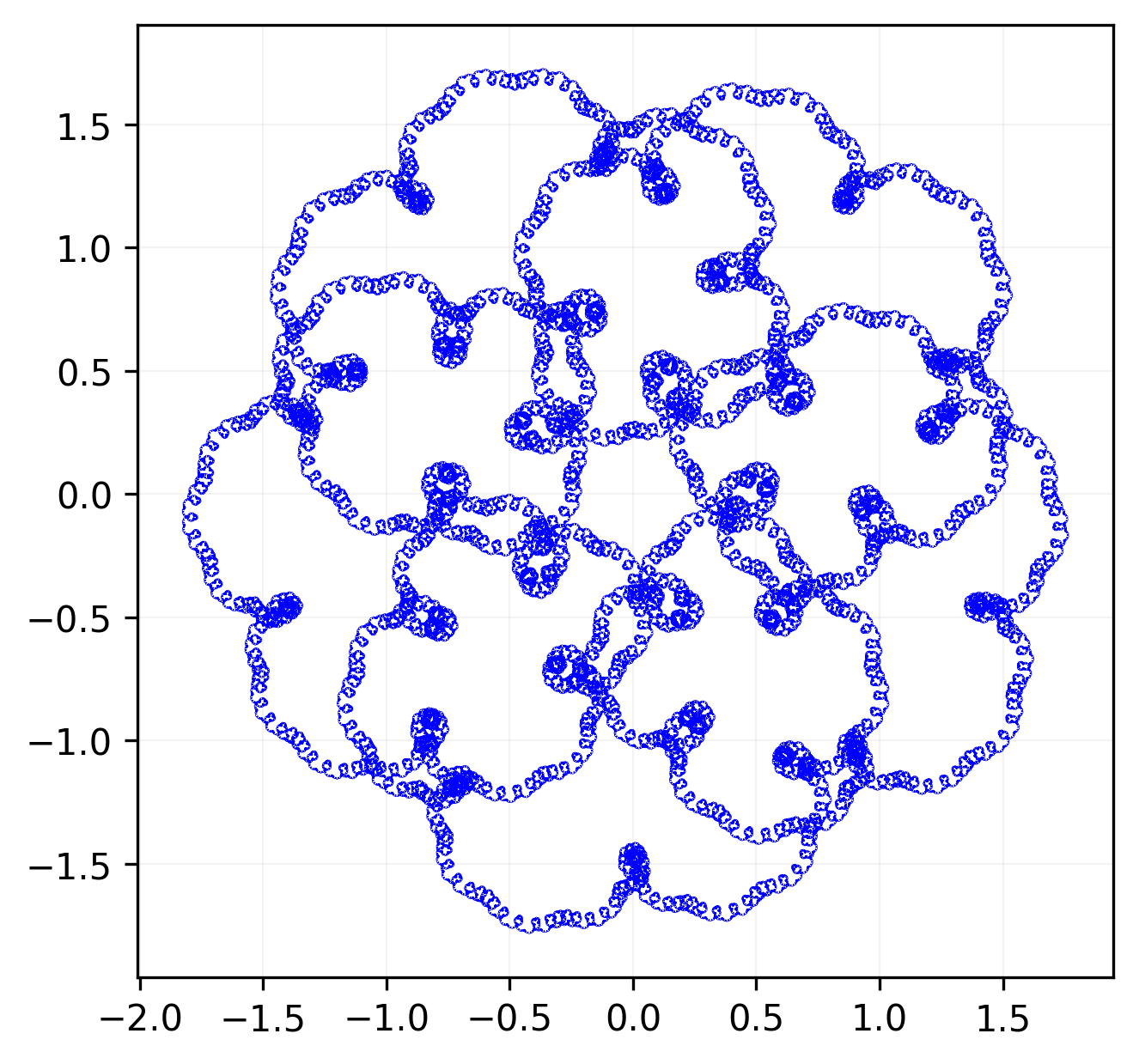}
\caption*{$\lambda=6,\beta=0.6$}
\end{subfigure}
\caption{Sample paths of $W^{\phi}_{\beta,\lambda,\Theta}([0, 1])$ for $\phi(x)=e^{2\pi ix}-\lambda^{-\beta} e^{2\pi i\lambda x}$ and  $\theta_n\overset{\mathrm{iid}}{\sim} U[0,1]$.}\label{fig4}
\end{figure}

\noindent{\bf (B). Riemann Functions.} The random Riemann function is given by
\begin{equation}\label{eq1.11}
R_{a, b, \Theta}(x)=\sum_{n=1}^{\infty}  n^{-b}  e^{2\pi i (n^{a}x+\theta_n)},\quad a>0,\;\; b>1,
\end{equation}
where $\theta_n\overset{\mathrm{iid}}{\sim} U[0,1]$. A straightforward calculation shows that
\[
s_k^2\approx  \int_{q^{\frac{k}{a}}}^{q^{\frac{k+1}{a}}}x^{-2b}dx \approx q^{-\frac{(2b-1)k}{a}},
\]
and hence
\[
\sigma=\tau=\frac{2b-1}{2a}.
\]
Note that $b>1$ and $\tau\leq 1$ if and only if $1<b\leq a+\frac{1}{2}$. Therefore, we have
\begin{corollary}\label{coro1.4}
If $1<b\leq a+\frac{1}{2}$, for any Borel set $A\subset\mathbb{R}$, then
\begin{enumerate}[(i)]
    \item almost surely,
\begin{align*}
\hd R_{a,b,\Theta}(A)&= \min\Bigl\{2,\; \frac{2a}{2b-1}\hd A\Bigr\},\\
\hd G_{R_{a,b,\Theta}}(A)&= \min\Bigl\{\frac{2a}{2b-1}\hd A,\;\hd A+2-\frac{2b-1}{a}\Bigr\}.
\end{align*}
\item If $\hd A>\frac{2b-1}{a}$,  $R_{a,b,\Theta}(A)$ has positive  two-dimensional Lebesgue measure a.s.
\item If $\hd A>\frac{4b-2}{a}$, $R_{a,b,\Theta}(A)$ has interior points a.s.
\item If $\bd A=\hd A$, then the above two dimension formulas also hold for the box-counting dimension.
\end{enumerate}
\end{corollary}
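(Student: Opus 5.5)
Corollary \ref{coro1.4} will be obtained by specializing Theorems \ref{thm1.1} and \ref{thm1.2} to $a_n=n^{-b}$, $\lambda_n=n^a$ and $\phi_n(x)=e^{2\pi ix}$. The only real work is to check the hypotheses and to compute $\sigma,\tau$ exactly.

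First, the hypotheses. The function $e^{2\pi ix}$ is bounded, and it is bi-Lipschitz on $|x-y|\le\delta$ for any $\delta<1/2$ with a uniform constant, since $|e^{2\pi ix}-e^{2\pi iy}|=2|\sin\pi(x-y)|$. Next, $\sum n^{-b}<\infty$ because $b>1$. Finally, $\lambda_1=1$ and $\lambda_{n+1}/\lambda_n=(1+1/n)^a\le 2^a$, so \eqref{eq1.6} holds with $q:=\sup_n\lambda_{n+1}/\lambda_n=2^a>1$.

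Second, the computation of $s_k$. The condition $q^k\le n^a<q^{k+1}$ means $n\in[q^{k/a},q^{(k+1)/a})=[2^k,2^{k+1})$, so
\[
s_k^2=\sum_{2^k\le n<2^{k+1}}n^{-2b}.
\]
This sum lies between $2^k\cdot 2^{-2b(k+1)}$ and $2^k\cdot 2^{-2bk}$. Hence $s_k^2=2^{-(2b-1)k}$ up to factors depending only on $b$, and
\[
\frac{-\log s_k}{k\log q}=\frac{(2b-1)k\log 2/2+O(1)}{ak\log 2}\longrightarrow\frac{2b-1}{2a}.
\]
So $\sigma=\tau=\frac{2b-1}{2a}$, which is positive since $b>1$. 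The condition $\tau\le 1$ is exactly $b\le a+\frac12$.

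With these values, (i) follows by substitution. Theorem \ref{thm1.1}(i) gives matching bounds $\min\{2,\hd A/\tau\}$, because $\min\{\sigma,1\}=\sigma=\tau$ when $\tau\le1$. Theorem \ref{thm1.2} gives $\min\{\hd A/\tau,\ \hd A+2-2\tau\}$ with $2\tau=(2b-1)/a$. Parts (ii) and (iii) are Theorem \ref{thm1.1}(ii),(iii), since $\tau<\frac12\hd A$ is equivalent to $\hd A>\frac{2b-1}{a}$, and $\tau<\frac14\hd A$ is equivalent to $\hd A>\frac{4b-2}{a}$.

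For (iv), we need upper bounds for the upper box dimension that match the Hausdorff values. Box dimension dominates Hausdorff dimension, so the lower bounds transfer. For the upper bounds I would use an almost sure Hölder estimate: a.s. $|R_{a,b,\Theta}(x)-R_{a,b,\Theta}(y)|\le C|x-y|^{\tau-\epsilon}$ on bounded sets. This is the kind of estimate that presumably underlies the upper bound in Theorem \ref{thm1.1} (e.g. via Salem--Zygmund/Kahane estimates for random trigonometric sums). Given this, cover $A$ by $N(r)\approx r^{-\bd A}$ intervals of length $r$.
\begin{itemize}
\item For the image, each interval maps into a ball of radius $r^{\tau-\epsilon}$, which gives $\overline{\dim}_{B} R(A)\le \bd A/\tau$. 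The bound $2$ is trivial.
\item For the graph, at scale $r^{\tau}$ consider interval length $\rho=r$. Each interval gives a column of size $r\times r^{\tau}\times r^{\tau}$, covered by about $r^{2\tau-2}$ cubes of side $r$. This gives the bound $\bd A+2-2\tau$.
\item Alternatively, use intervals of length $r^{1/\tau}$, each mapping into a single cube of side $r$. This gives $\bd A/\tau$.
\end{itemize}
With $\bd A=\hd A$, both bounds match (i).

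The main obstacle is this last step: the Hölder estimate must be available as a uniform almost sure statement. I would extract it from the proof of the upper bound in Theorem \ref{thm1.1} rather than from its statement.
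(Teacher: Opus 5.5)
Your proposal is correct and follows essentially the same route as the paper: you check the hypotheses, compute $\sigma=\tau=\frac{2b-1}{2a}$, and substitute into Theorems \ref{thm1.1} and \ref{thm1.2}. Your exact choice $q=2^a$ makes the blocks dyadic, whereas the paper compares $s_k^2$ with $\int x^{-2b}\,dx$ for a general $q$. For (iv), the uniform almost sure H\"older estimate you flag as the obstacle is precisely Theorem \ref{thm2.3}, proved via the Marcinkiewicz--Zygmund inequality and the Kolmogorov--Chentsov theorem, and it already states the box-dimension upper bound for images; the graph bound \eqref{eq4.2} is cited from Kahane, so your covering arguments are these standard estimates written out.
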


Our results show that the Hausdorff dimension of the random  curve for $R_{2,2}$ is equal to  $4/3$ almost surely, so is the related Riemann function $\phi$ in (\ref{eq1.3}) (see Section \ref{sec5.1}). Therefore, the upper bound in (\ref{eq1.4}) should be the conjectured value for the image of $\phi$.

The paper is organized as follows. In Section \ref{sec2}, we  establish an optimal H\"{o}lder exponent for $S(x)$ to obtain the upper bound for the dimensions.   Section \ref{sec3} is the main part of the paper. We obtain a lower bound for the Sobolev dimension of $S(A)$, from which the remaining conclusions of Theorem \ref{thm1.1} follow.     Section \ref{sec4} is devoted to the Hausdorff dimension of the graphs of $S(x)$ over Borel sets. In Section \ref{sec5}, we discuss the Hausdorff dimensions of the images and graphs of other random series, such as the random Weierstrass-Mandelbrot-type series and some random one-dimensional trigonometric series. Section \ref{sec6} concludes the paper with some open problems and conjectures.

\medskip

Throughout this paper, the constants $c, c_1, c_2,  \ldots,$ etc. denote the absolute constants. We use the notation $f(x) \lesssim g(x) $ to denote that there exists a constant $C>0$ independent of $x$ such that $f(x)\leq C g(x)$ for all $x$. We write $f(x)\approx g(x)$ to mean $f(x)\lesssim g(x) \lesssim f(x)$. In some occasions, we will employ the notation $f(x)\lesssim_{\epsilon} g(x)$ to emphasize that the previous implicit constants $C$ may depend on some relevant parameter $\epsilon$.

\section{H\"{o}lder continuity and Upper bound}\label{sec2}

The goal of this section is to establish the upper bound of the Hausdorff dimensions by computing the almost sure H\"{o}lder exponent of the random series \eqref{eq1.8}. This has been a classical work in stochastic processes via the  Kolmogorov-Chentsov  theorem (see e.g. \cite[p.450]{Kle}).

\begin{theorem}[Kolmogorov-Chentsov  theorem]
 Let $ X_t (t\in (0, \infty))$ be a real-valued process. Assume for every $T > 0$, there are numbers $\alpha, \beta, C>0$ such that
 \begin{equation}\label{eq2.1}
 \mathbb{E}[|X_t-X_s|^{\alpha}]\leq C|t-s|^{1+\beta},\quad \forall s, t\in [0, T].
 \end{equation}
 Then  there is a modification $Y_t$   of $X_t$ whose paths are locally H\"{o}lder continuous of every order $\gamma\in (0, \beta/\alpha)$.
\end{theorem}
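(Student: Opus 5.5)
This is the classical Kolmogorov--Chentsov theorem, which the paper quotes without proof from \cite{Kle}. I would follow the standard dyadic chaining argument. Fix $T>0$ and $\gamma\in(0,\beta/\alpha)$; it suffices to work on $[0,T]$ and then patch over $T\in\N$. Let $D_n=\{kT2^{-n}:0\le k\le 2^n\}$ and $D=\bigcup_n D_n$.

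\textbf{Step 1: Borel--Cantelli on consecutive dyadic points.} By Chebyshev's inequality applied to \eqref{eq2.1},
\[
\mathbb{P}\bigl(|X_{(k+1)T2^{-n}}-X_{kT2^{-n}}|\ge 2^{-\gamma n}\bigr)\le C\,T^{1+\beta}2^{-n(1+\beta)}2^{\alpha\gamma n}.
\]
A union bound over the $2^n$ values of $k$ gives
\[
\mathbb{P}\Bigl(\max_k |X_{(k+1)T2^{-n}}-X_{kT2^{-n}}|\ge 2^{-\gamma n}\Bigr)\lesssim 2^{-n(\beta-\alpha\gamma)}.
\]
This is summable because $\gamma<\beta/\alpha$. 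By Borel--Cantelli, almost surely there is a random $N(\omega)$ such that for all $n\ge N$ every consecutive pair in $D_n$ has increment below $2^{-\gamma n}$.

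\textbf{Step 2: chaining.} Take $s,t\in D$ with $|t-s|\le T2^{-m}$ and $m\ge N$. Each of $s,t$ can be reached from a common neighbouring point of $D_m$ by adding at most one step of length $T2^{-j}$ for each level $j>m$. The triangle inequality then gives
\[
|X_t-X_s|\le 2\sum_{j\ge m}2^{-\gamma j}\lesssim 2^{-\gamma m}\lesssim |t-s|^{\gamma}.
\]
So $X$ restricted to $D$ is uniformly $\gamma$-H\"older on $\{|t-s|\le T2^{-N}\}$, and hence uniformly continuous on $D\cap[0,T]$. This combinatorial bookkeeping is the step I expect to need the most care, though it is routine.

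\textbf{Step 3: extension and modification.} Define $Y_t=\lim_{D\ni s\to t}X_s$ on the almost sure event, and $Y\equiv0$ off it. Then $Y$ inherits the H\"older bound. To see that $Y$ is a modification, note that \eqref{eq2.1} implies $X_s\to X_t$ in probability as $s\to t$. Along a sequence $s_j\in D$ with $s_j\to t$, we have $X_{s_j}\to Y_t$ almost surely and $X_{s_j}\to X_t$ in probability, so $X_t=Y_t$ almost surely.

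Finally, intersect over rational $\gamma\uparrow\beta/\alpha$ and over $T\in\N$. The modifications built for different $T$ agree on $D$, hence everywhere by continuity. This gives a single modification that is locally H\"older of every order $\gamma<\beta/\alpha$.
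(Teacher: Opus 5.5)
Your proposal is correct and is the standard dyadic chaining proof (Markov's inequality with Borel--Cantelli on $D_n$, chaining, continuous extension from $D$, and identification $Y_t=X_t$ a.s.\ via convergence in probability), which is the argument in the source \cite{Kle}; the paper itself only quotes the theorem and gives no proof. One small cleanup for the final patching step: with $D_n=\{kT2^{-n}\}$ the dyadic sets for different $T$ are not nested, so ``agree on $D$'' needs either $D_n=\{k2^{-n}\}\cap[0,T]$ or the observation that two continuous modifications of $X$ agree a.s.\ on the rationals and are therefore indistinguishable.
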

Here, the modification $Y_t$ of $X_t$ means that for every $t$, $\mathbb{P}(Y_t=X_t)=1$. Note that this event with probability 1 depends on $t$. To derive the moment estimates (\ref{eq2.1}), one needs the Marcinkiewicz-Zygmund inequality \cite[p.578]{Kaw}.

\begin{theorem}[Marcinkiewicz-Zygmund inequality]
Let $p\geq 1$ and $X_n$ be a sequence of independent random variables such that  $\mathbb{E}X_n=0$ and $\mathbb{E}|X_n|^p<\infty$. If $\sum_{n=1}^{\infty}X_n$ converges almost surely, and if the sum belongs to $L^p$, then $\sum_{n=1}^{\infty} X_n^2$ converges almost surely, belongs to $L^{p/2}$, and
\[
A_p\mathbb{E}\Bigl[\Bigl(\sum_{n=1}^{\infty}|X_n|^2\Bigr)^{\frac{p}{2}}\Bigr]\leq \mathbb{E}\Bigl|\sum_{n=1}^{\infty}X_n\Bigr|^p\leq B_p\mathbb{E}\Bigl[\Bigl(\sum_{n=1}^{\infty}|X_n|^2\Bigr)^{\frac{p}{2}}\Bigr].
\]
\end{theorem}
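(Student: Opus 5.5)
Since this inequality is classical and only quoted here, I sketch the standard argument: symmetrization followed by Khintchine's inequality. I would first prove the two-sided bound for finite sums $S_N=\sum_{n\le N}X_n$, with constants $A_p,B_p$ depending only on $p$, and then pass to the limit $N\to\infty$. Complex-valued $X_n$ (the case needed here, e.g.\ $a_nX_n\phi_n(\lambda_n x)$) reduce to the real case by splitting into real and imaginary parts. This costs only a factor depending on $p$, because $|z|^p\approx_p |\mathrm{Re}\,z|^p+|\mathrm{Im}\,z|^p$ and $\sum|X_n|^2=\sum(\mathrm{Re}\,X_n)^2+\sum(\mathrm{Im}\,X_n)^2$.

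For the finite case, let $\{X_n'\}$ be an independent copy of $\{X_n\}$ and put $S_N'=\sum_{n\le N}X_n'$.
\begin{enumerate}[(1)]
\item Since $\mathbb{E}X_n'=0$, we have $S_N=\mathbb{E}[S_N-S_N'\mid X]$. Conditional Jensen then gives $\mathbb{E}|S_N|^p\le \mathbb{E}|S_N-S_N'|^p\le 2^{p}\,\mathbb{E}|S_N|^p$, where the last step uses the triangle inequality.
\item The variables $X_n-X_n'$ are symmetric, so $S_N-S_N'$ has the same law as $\sum_{n\le N}\varepsilon_n(X_n-X_n')$, where $\{\varepsilon_n\}$ is an independent Rademacher sequence.
\item Conditioning on $(X,X')$ and applying Khintchine's inequality gives $\mathbb{E}|S_N-S_N'|^p\approx_p \mathbb{E}\bigl(\sum_{n\le N}|X_n-X_n'|^2\bigr)^{p/2}$.
\item For the upper comparison, $|X_n-X_n'|^2\le 2|X_n|^2+2|X_n'|^2$, and $(u+v)^{p/2}\lesssim_p u^{p/2}+v^{p/2}$. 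So this last quantity is $\lesssim_p \mathbb{E}\bigl(\sum|X_n|^2\bigr)^{p/2}$.
\item For the lower comparison, the map $y\mapsto(\sum|y_n|^2)^{p/2}$ is convex for $p\ge1$, being a power $\ge1$ of a norm. Conditional Jensen in the variables $X'$, with $\mathbb{E}'[X_n-X_n']=X_n$, gives $\bigl(\sum|X_n|^2\bigr)^{p/2}\le \mathbb{E}'\bigl(\sum|X_n-X_n'|^2\bigr)^{p/2}$.
\end{enumerate}
Combining these steps yields the finite-$N$ inequality.

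To pass to the infinite series, write $S=S_N+T_N$ with $T_N=\sum_{n>N}X_n$ independent of $S_N$.
\begin{enumerate}[(1)]
\item Since $S\in L^p$, Fubini applied to $\mathbb{E}|x+T_N|^p$ shows that both $S_N$ and $T_N$ lie in $L^p$. Hence $\mathbb{E}S$ and $\mathbb{E}T_N$ exist.
\item \emph{This is the step I expect to be most delicate.} I need to show $\mathbb{E}T_N=0$, so that $S_N=\mathbb{E}[S\mid X_1,\dots,X_N]$. I would get this from uniform integrability: the conditional-expectation identity at level $M>N$ gives $\sum_{N<n\le M}X_n=\mathbb{E}[\,\cdot\mid\ldots]$ of the tail. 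Alternatively, I would use the symmetrized series $\sum(X_n-X_n')$ and Lévy's inequality, and transfer back via step (1) of the finite case.
\item Given the identity $S_N=\mathbb{E}[S\mid X_1,\dots,X_N]$, conditional Jensen gives $\mathbb{E}|S_N|^p\le \mathbb{E}|S|^p$. The finite lower bound and monotone convergence then give $A_p\,\mathbb{E}\bigl(\sum_{n=1}^{\infty}|X_n|^2\bigr)^{p/2}\le \mathbb{E}|S|^p$. In particular $\sum|X_n|^2<\infty$ a.s. and lies in $L^{p/2}$.
\item Since $S_N\to S$ a.s., Fatou's lemma with the finite upper bound gives $\mathbb{E}|S|^p\le\liminf_N\mathbb{E}|S_N|^p\le B_p\,\mathbb{E}\bigl(\sum_{n=1}^{\infty}|X_n|^2\bigr)^{p/2}$.
\end{enumerate}
This completes the proof.
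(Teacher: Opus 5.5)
The paper does not prove this inequality: it quotes it from Kawata's book (p.~578) and only remarks that the complex case follows by splitting into real and imaginary parts, which is the same reduction you make. Your symmetrization-plus-Khintchine argument is the standard proof, and it is correct. The finite-$N$ part is complete as written.

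The step you flagged does close, but the identity needs to be stated precisely. Put $T_{N,M}=\sum_{N<n\le M}X_n$ and $c=\mathbb{E}T_N$. Since $T_N=T_{N,M}+T_M$ and $T_M\in L^1$ by your Fubini step, $c=\mathbb{E}T_M$ for all $M>N$. Independence then gives $\mathbb{E}[T_N\mid X_{N+1},\dots,X_M]=T_{N,M}+c$, not $T_{N,M}$. Uniform integrability ignores additive constants, so $\{T_{N,M}\}_{M>N}$ is uniformly integrable and converges a.s.\ to $T_N$. Vitali's theorem then gives $c=\lim_M\mathbb{E}T_{N,M}=0$. Equivalently, $S_N+c=\mathbb{E}[S\mid X_1,\dots,X_N]\to S$ a.s.\ by L\'evy's upward theorem, while $S_N\to S$ a.s., which forces $c=0$. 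Your alternative via L\'evy's maximal inequality also works. It gives the stronger conclusion $S_N\to S$ in $L^p$, after which your steps (3)--(4) are just limits of the finite inequality.

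Your proof makes the paper self-contained, and the limiting argument for the infinite series is where all the subtlety of the quoted statement lies. The paper's only use of the inequality, in Theorem~\ref{thm2.3}, never meets that subtlety. There the summands $a_nX_n\bigl(\phi_n(\lambda_n(x+h))-\phi_n(\lambda_nx)\bigr)$ are bounded by $2\sup_n\lVert\phi_n\rVert\,|a_n|$ with $\sum|a_n|<\infty$, so the partial sums converge boundedly. They are also already symmetric, since Steinhaus variables are rotation invariant. Only the upper bound is needed, and it follows from Khintchine's inequality applied conditionally on $\{X_n\}$ (using $|X_n|=1$) together with dominated convergence. No symmetrization or conditional-expectation identity is required in that setting.
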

The inequality also holds for the complex-valued random variables by considering its real and imaginary parts via an elementary inequality $(|a|+|b|)^p\leq C_p (|a|^p+|b|^p)$  for all  $p>0$.  Our main theorem in this section is as below:
\begin{theorem}\label{thm2.3}
If $ 0<\sigma<\infty$, then almost surely, the random  series $S(x)$ in \eqref{eq1.8} is  H\"{o}lder continuous with exponent $\alpha$ for all $\alpha\in (0, \min\{\sigma, 1\})$. Consequently, almost surely, for any Borel set $A\subset\mathbb{R}$,
\begin{equation}\label{eq2.2}
\dim S(A)\leq \min\Bigl\{2,\; \frac{\dim A}{\min\{\sigma, 1\}}\Bigr\},
\end{equation}
where $\dim$ denotes the Hausdorff dimension, the lower or upper box-counting dimensions.
\end{theorem}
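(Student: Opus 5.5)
Fix $\alpha<\alpha'<\min\{\sigma,1\}$. The plan is to derive a moment bound of the form \eqref{eq2.1} for the increments $S(x)-S(y)$ using the Marcinkiewicz-Zygmund inequality, and then apply the Kolmogorov-Chentsov theorem to the real and imaginary parts of $S$ separately.

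\emph{Moment bound.} The summands $Y_n=a_nX_n(\phi_n(\lambda_nx)-\phi_n(\lambda_ny))$ are independent, have mean zero because $\mathbb{E}X_n=0$, and are uniformly bounded by $2\sup_n\|\phi_n\||a_n|$. Since $\sum|a_n|<\infty$, the series converges absolutely, so the Marcinkiewicz-Zygmund inequality applies and gives
\[
\mathbb{E}|S(x)-S(y)|^p\lesssim_p\Bigl(\sum_n|a_n|^2|\phi_n(\lambda_nx)-\phi_n(\lambda_ny)|^2\Bigr)^{p/2}.
\]
Here I used that $|X_n|=1$, so the square function is deterministic.

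To bound the right-hand side, take $|x-y|=h<\delta$ small and choose $K$ with $q^{K}\approx \delta/h$. I group the indices $n$ into the blocks $q^k\le\lambda_n<q^{k+1}$.
\begin{itemize}
\item For $k\le K$, we have $|\lambda_n x-\lambda_n y|\le\delta$, so the Lipschitz bound in \eqref{eq1.5} gives $|\phi_n(\lambda_nx)-\phi_n(\lambda_ny)|\le Lq^{k+1}h$.
\item For $k>K$, I use the trivial bound $2\sup\|\phi_n\|$.
\end{itemize}
By the definition of $\sigma$, for any $\sigma'<\sigma$ we have $s_k\lesssim q^{-k\sigma'}$. Hence
\[
\sum_n|a_n|^2|\Delta_n|^2\lesssim h^2\sum_{k\le K}q^{2k(1-\sigma')}+\sum_{k>K}q^{-2k\sigma'}.
\]
Choose $\sigma'$ with $\alpha'<\sigma'<\sigma$. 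If $\sigma'<1$, both sums are $\lesssim h^{2\sigma'}$. If $\sigma\ge1$, I instead take $\sigma'$ slightly below $1$, and the first sum gives at most $h^{2\sigma'}$ as well. In either case the sum is $\lesssim h^{2\alpha'}$.

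Combining, $\mathbb{E}|S(x)-S(y)|^p\lesssim h^{p\alpha'}$ for every $p$. Large $h$ is harmless on a compact interval, since $S$ is bounded.

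\emph{Hölder continuity.} Apply the Kolmogorov-Chentsov theorem to the real and imaginary parts with $p$ large. This yields a modification that is Hölder of every order $<(p\alpha'-1)/p$. Letting $p\to\infty$ and $\alpha'\to\min\{\sigma,1\}$ gives every order below $\min\{\sigma,1\}$, locally, and then on all of $\mathbb{R}$ by taking a countable union of compact intervals. The main subtlety is that the theorem only produces a modification. However, $S$ itself is almost surely continuous: the series converges uniformly, each $\phi_n$ is continuous, and $\sum|a_n|<\infty$. Two continuous processes that agree almost surely on a countable dense set agree everywhere, so $S$ itself is almost surely Hölder. A countable intersection over rational $\alpha$ then handles all $\alpha$ simultaneously.

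\emph{Dimension bound.} Fix an $\alpha$-Hölder sample path with $\alpha<\min\{\sigma,1\}$. The standard fact that an $\alpha$-Hölder map multiplies Hausdorff and lower/upper box dimensions by at most $1/\alpha$ gives $\dim S(A)\le\dim A/\alpha$. For Hausdorff dimension this follows by pushing forward $\delta$-covers; for box dimensions it follows by counting images of $\delta$-intervals, each of which lands in a ball of radius $\lesssim\delta^\alpha$. The bound $\dim S(A)\le 2$ holds trivially since $S(A)\subset\mathbb{R}^2$. Since the almost-sure event is the Hölder property, which does not depend on $A$, the bound holds for all Borel $A$ at once. Letting $\alpha\uparrow\min\{\sigma,1\}$ gives \eqref{eq2.2}.
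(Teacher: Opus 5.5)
Your proposal is correct and follows the paper's argument essentially step for step. Both use Marcinkiewicz--Zygmund with the deterministic square function, split the frequencies into $q$-adic blocks at $q^{K}\approx\delta/h$ (Lipschitz bound on the low blocks, trivial bound on the high ones), apply Kolmogorov--Chentsov, and use continuity of $S$ to pass from the modification to $S$ itself. The differences are cosmetic: you apply Kolmogorov--Chentsov to the real and imaginary parts separately, and you handle $\sigma\ge 1$ by taking $\sigma'<1$ instead of bounding by $h^2$. One small normalization is needed: choose $K$ so that $q^{K+1}h\le\delta$, as the paper does with $\delta/q^{N+1}<h\le\delta/q^{N}$, so that the Lipschitz bound in \eqref{eq1.5} genuinely applies on every low block.
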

\begin{proof}
Let $\delta$ and $q$ be the constants in \eqref{eq1.5} and \eqref{eq1.6}. We are going to estimate $\mathbb{E}|S(x+h)-S(x)|^p$. It suffices to consider  $0<h\leq \delta/q$. Since $\{a_n\}\in l^1$  ensures that $S(x)$ is bounded, the moment estimate in the Kolmogorov-Chentsov theorem for larger $h>\delta/q$ holds automatically by considering a large constant.   Let  now $N\geq 1$ be the unique integer such that $\delta/q^{N+1}<h\leq  \delta/q^{N}$. By the  Marcinkiewicz-Zygmund inequality, for $p\geq 1$,
\[
\mathbb{E}|S(x+h)-S(x)|^p\leq B_p\mathbb{E}\Bigl(\sum_{n=1}^{\infty}|a_n|^2|X_n|^2\Delta^2_n(x,h)\Bigr)^{\frac{p}{2}}=B_p\Bigl(\sum_{n=1}^{\infty}|a_n|^2\Delta^2_n(x,h)\Bigr)^{\frac{p}{2}},
\]
where $\Delta_n(x,h)=|\phi_n(\lambda_n(x+h))-\phi_n(\lambda_n  x)|$.
The definition of $\sigma$ implies that  for any sufficiently small $\epsilon>0$,
\[
s_n\lesssim_{\epsilon} q^{-n(\sigma-\epsilon)}.
\]
Then, we see that
\begin{align*}
\sum_{n=1}^{\infty}|a_n|^2\Delta_n^2(x,h)&=\sum_{k=0}^{N-1}\sum_{q^k\leq \lambda_n<q^{k+1}}|a_n|^2\Delta_n^2(x,h)+\sum_{k=N}^{\infty}\sum_{q^k\leq\lambda_n<q^{k+1}}|a_n|^2\Delta_n^2(x,h)\\
&\leq L^2h^2\sum_{k=0}^{N-1}\sum_{q^k\leq\lambda_n<q^{k+1}}|a_n|^2 \lambda_n^2+2\sup_{n}\lVert\phi_n\rVert\sum_{k=N}^{\infty}\sum_{q^k\leq\lambda_n<q^{k+1}}|a_n|^2\\
&\leq L^2h^2\sum_{k=0}^{N-1}q^{2(k+1)}s_k^2 +2\sup_{n}\lVert\phi_n\rVert\sum_{k=N}^{\infty}s_k^2 \\
&\lesssim_{\epsilon}h^2 \sum_{k=0}^{N-1}q^{2k(1-\sigma+\epsilon)}+\sum_{k=N}^{\infty}q^{-2k(\sigma-\epsilon)}.
\end{align*}
If $0<\sigma\leq 1$, then, using the fact that $q^{-N}\approx h$,
\[
\sum_{n=1}^{\infty}|a_n|^2\Delta_n^2(x,h)\lesssim_{\epsilon} h^2 q^{2N(1-\sigma+\epsilon)} +q^{-2N(\sigma-\epsilon)}\lesssim_{\epsilon}h^{2(\sigma-\epsilon)}.
\]
On the other hand, if $ \sigma>1$, we have
\[
\sum_{n=1}^{\infty}|a_n|^2\Delta_n^2(x,h)\lesssim_{\epsilon} h^{2}+h^{2(\sigma-\epsilon)}\lesssim_{\epsilon} h^2.
\]
Thus,
\begin{equation}\label{eq2.3}
\mathbb{E}|S(x+h)-S(x)|^p\lesssim_{\epsilon,p}h^{p(\min\{\sigma, 1\}-\epsilon)}:=h^{1+\alpha_0}.
\end{equation}
The Kolmogorov-Chentsov theorem  implies that there exists a  modification $\widetilde{S}(x)$ of $S(x)$ that is  H\"{o}lder continuous with any order $0<\alpha<\alpha_0/p=\min\{\sigma, 1\}-\epsilon-1/p$.

Since $\{a_n\}\in l^1$, $S(x)$ is continuous for every sample path $\Theta$. $\mathbb{P}(S(t_n)=\widetilde{S}(t_n),\, \forall t_n\in \mathbb{Q})=1$. By the continuity of $S(t)$ and $\widetilde{S}(t)$, $\mathbb{P}(S(t)=\widetilde{S}(t),\,\forall t\in \mathbb{R})=1$.  The arbitrariness of $\epsilon$ and $p$ completes the proof of H\"{o}lder continuity.
\end{proof}

Our result established the almost sure H\"{o}lder exponents.  We remark that it is not always the case that the H\"{o}lder exponent is constant at all points. It is true that the Weierstrass function $W_{\beta,\lambda}$ has a uniform H\"{o}lder exponent $\beta$  at all points if $\beta\in(0, 1)$.  The Riemann function $R_{a,b}$ has a uniform H\"{o}lder exponent $(b-1)/a$ at all points if $1<b<a+1$ \cite{Johnsen}.  This  uniform exponent  of  Riemann function is too small to give an optimal upper bound for the dimension.  Indeed, Riemann functions possess a rich multifractal structure, and their pointwise exponents varies from points to points.  For further multifractal analysis of Riemann functions, we refer to \cite{CU2014,Jaf}.

\section{Lower bound of  Hausdorff dimension}\label{sec3}

\subsection{Sobolev dimension}

To prove the lower bound on the Hausdorff dimension in Theorem \ref{thm1.1}, let us review some basic properties about the Sobolev dimension.  The Sobolev dimension \cite[Section 5.2]{Mattila} of a finite Borel measure $\mu$ on ${\mathbb R}^d$  is defined to be
\begin{equation}\label{eq3.1}
\sd{\mu} = \sup \Bigl\{s>0: \int_{\mathbb{R}^d} |\widehat{\mu}(\xi)|^{2} |\xi|^{s-d} d\xi<\infty\Bigr\}
\end{equation}
and the Fourier transform of a measure is defined to be
\[
\widehat{\mu}(\xi) = \int_{\mathbb{R}^d} e^{-2\pi i \langle\xi, u\rangle}d\mu(u).
\]
The Sobolev dimension of a Borel set $E$ is defined to be the supremum of $s$ such that $E$ supported a finite Borel measure $\mu$ with $\sd{\mu} = s$. If $0< s< d$,  the $s$-energy of $\mu$ is
\[
I_s(\mu)=\int_{\mathbb{R}^d}\int_{\mathbb{R}^d}\frac{d\mu(u)d\mu(v)}{|u-v|^s}=c(s,d)\int_{\mathbb{R}^d}|\xi|^{s-2}|\widehat{\mu}(\xi)|^2d\xi,
\]
where $c(s,d)$ is a positive constant. As Hausdorff dimension of a Borel set $E$ can be expressed as the supremum of $s<d$ such that $I_{s}(\mu)<\infty$ for some finite Borel measure $\mu$ supported on $E$, we know that
\begin{equation}\label{eq3.2}
\hd{E} = \min \{d,\, \sd{E}\}.
\end{equation}
Unlike the energy integral, it is possible that the Sobolev dimension to go over the ambient dimension $d$. Indeed, if the set $E$ contains an interior point, then $E$ will support a non-negative infinitely many differentiable function $f$ and its Fourier transform will decay as fast as we can. Taking $\mu = f(x)dx$, $\sd{E} = \infty$. The following proposition can be found in Theorem 5.4 of \cite{Mattila}.
\begin{proposition}\label{prop3.1}
Let $E$ be a Borel set on $\mathbb{R}^d$. (i) If $\sd{E}>d$, then $E$ has positive $d$-dimensional Lebesgue measure. (ii) If $\sd{E}>2d$, then $E$ contains an interior point.
\end{proposition}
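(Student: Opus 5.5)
We would prove Proposition~\ref{prop3.1} directly from the definition \eqref{eq3.1}, choosing a measure $\mu$ supported on $E$ with $\sd\mu>d$ (respectively $>2d$); such a $\mu$ exists by the definition of $\sd E$ as a supremum. Fix $s$ with $d<s<\sd\mu$ (resp.\ $2d<s<\sd\mu$), so that
\[
\int_{\mathbb{R}^d}|\widehat{\mu}(\xi)|^2|\xi|^{s-d}\,d\xi<\infty .
\]
Since $\mu$ is finite, $|\widehat\mu|\le \mu(\mathbb{R}^d)$, so the integral over $|\xi|\le 1$ is harmless and all the information sits in the region $|\xi|\ge1$.

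For (i), $s>d$ gives $|\xi|^{s-d}\ge 1$ on $|\xi|\ge 1$. Together with boundedness on the unit ball this yields $\widehat\mu\in L^2(\mathbb{R}^d)$. By Plancherel there is then $f\in L^2$ with $\widehat f=\widehat\mu$, and uniqueness of Fourier transforms of tempered distributions forces $\mu=f\,dx$. Because $\mu$ is a nonzero positive measure, $f\ge0$ a.e.\ and $\int f>0$. As $\mu$ is carried by $E$, the set $\{f>0\}$ is contained in $E$ up to a null set, so $\mathcal{L}^d(E)>0$. The only point needing care is that $E$ is merely Borel rather than closed; we handle this by working with $\mu(\mathbb{R}^d\setminus E)=0$ instead of with the support.

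For (ii), we use Cauchy--Schwarz on $|\xi|\ge1$:
\[
\int_{|\xi|\ge1}|\widehat\mu|\,d\xi\le\Bigl(\int|\widehat\mu|^2|\xi|^{s-d}\,d\xi\Bigr)^{1/2}\Bigl(\int_{|\xi|\ge1}|\xi|^{d-s}\,d\xi\Bigr)^{1/2}.
\]
The second factor is finite exactly when $s-d>d$, i.e.\ $s>2d$, which explains the threshold. Hence $\widehat\mu\in L^1$, so $\mu=f\,dx$ with $f$ continuous by Fourier inversion and Riemann--Lebesgue. Since $f\ge0$ is continuous and not identically zero, $\{f>0\}$ is a nonempty open set.

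This is the main obstacle: we must pass from ``$\mu$ is concentrated on $E$'' to ``$E$ contains an open set.'' The first step is to read ``$E$ supports $\mu$'' as the closed support $\operatorname{spt}\mu\subset E$, which is Mattila's convention. Then the open set $\{f>0\}$ lies in $\operatorname{spt}\mu\subset E$, and we are done. If only $\mu(E^c)=0$ were available, we would get just $\mathcal{L}^d(U\setminus E)=0$ for some open $U$, which is not enough. In that case we would fall back on Mattila's Theorem~5.4 setting, or on compact $E$, the case relevant to images $S(A)$ of compact $A$. This is how we would interpret the statement.
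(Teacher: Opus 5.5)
Your proof is correct and follows the same argument the paper relies on. The paper only cites Mattila's Theorem 5.4, and in Section 6 describes it as the $L^2$ versus $L^1$ dichotomy for $\widehat{\mu}$ (Mattila's Theorems 3.3 and 3.4); that is your Plancherel step for $\sd\mu>d$, and your Cauchy--Schwarz bound against $|\xi|^{d-s}$ plus Fourier inversion for $\sd\mu>2d$.

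Your reading of ``supports'' as $\operatorname{spt}\mu\subset E$ is genuinely needed. With only $\mu(\mathbb{R}^d\setminus E)=0$, part (ii) fails for $E=\mathbb{R}^d\setminus\mathbb{Q}^d$ with $\mu$ a smooth bump times Lebesgue measure. It costs nothing in the paper's application, though: Frostman's lemma gives $\nu$ with compact support inside the Borel set $A$, so $\operatorname{spt}S_*\nu=S(\operatorname{spt}\nu)\subset S(A)$ with no compactness assumption on $A$.
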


\subsection{Estimates of a product of Bessel functions}
The Bessel function of the first kind $J_0(x)$ is defined as the integral
\[
J_0(x)=\int_{0}^{1}e^{ix\sin2\pi\theta}d\theta=\int_{0}^{1}\cos(x\sin2\pi\theta)d\theta,\quad x\in \mathbb{R}.
\]
The asymptotic expansion \cite[p.34]{Mattila} of the Bessel function $J_0$ at infinity is as follows:
\[
J_0(x)=\sqrt{\frac{2}{\pi x}} \cos(x-\frac{\pi}{4})+O(x^{-\frac{3}{2}}),\quad x\to\infty.
\]
It follows \cite[p.78]{AP} that there is a global estimate for $J_0$:
\begin{equation}\label{eq3.3}
|J_0(x)|\leq (1+|x|^2)^{-\frac{1}{4}},\quad \forall \, x\in\mathbb{R}.
\end{equation}
Finally, we use the Bessel function $J_0$ to represent the second moment of the Fourier transform of the push-forward  measure induced by the random  series $S(x)$ in  \eqref{eq1.8}.

\begin{lemma}\label{lemma3.2}
Let $\nu$ be a Borel measure supported on a Borel set $A\subset\mathbb{R}$  and
\[
\mu (E) = \nu (\{x\in A\colon S(x)\in E\}), \quad \forall E\subset\mathbb{R}^2 \; \text{Borel}.
\]
Then
\[
\mathbb{E}[|\widehat{\mu}(\xi)|^2]=\iint_{A\times A}\prod_{n=1}^\infty J_0(2\pi|\xi|\varphi_n(x,y))d\nu(x)d\nu(y),
\]
where $ \varphi_n(x,y)=|a_n|\cdot |\phi_n(\lambda_nx)-\phi_n(\lambda_ny)|$.
\end{lemma}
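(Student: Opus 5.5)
We compute $\widehat{\mu}$ directly from the push-forward definition and then average over the independent phases $\theta_n$. Since $\mu$ is the image of $\nu$ under $S$,
\[
\widehat{\mu}(\xi)=\int_A e^{-2\pi i\langle \xi, S(x)\rangle}\,d\nu(x),
\]
where $\xi\in\mathbb{R}^2$ is identified with a complex number and $\langle \xi,z\rangle=\mathrm{Re}(\bar\xi z)$. Therefore
\[
|\widehat{\mu}(\xi)|^2=\iint_{A\times A} e^{-2\pi i\langle \xi, S(x)-S(y)\rangle}\,d\nu(x)\,d\nu(y).
\]
We take expectations and apply Fubini. This is justified because the integrand has modulus $1$, $\nu$ is finite (we assume finiteness, as in the Sobolev dimension setup), and $(\Theta,x,y)\mapsto S_\Theta(x)-S_\Theta(y)$ is jointly measurable: it is continuous in $(x,y)$ for each $\Theta$, since $\{a_n\}\in l^1$. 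We get
\[
\mathbb{E}|\widehat{\mu}(\xi)|^2=\iint_{A\times A}\mathbb{E}\bigl[e^{-2\pi i\langle \xi, S(x)-S(y)\rangle}\bigr]\,d\nu(x)\,d\nu(y).
\]

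Fix $x,y$ and write $S(x)-S(y)=\sum_n X_n w_n$ with $w_n=a_n(\phi_n(\lambda_nx)-\phi_n(\lambda_ny))\in\mathbb{C}$, so that $|w_n|=\varphi_n(x,y)$. The series converges absolutely and uniformly in $\Theta$ because $\sum|a_n|<\infty$ and $\sup_n\|\phi_n\|<\infty$. Hence the partial sums converge for every $\Theta$, and by bounded convergence
\[
\mathbb{E}\bigl[e^{-2\pi i\langle \xi,\sum_n X_nw_n\rangle}\bigr]=\lim_{N\to\infty}\prod_{n=1}^N\mathbb{E}\bigl[e^{-2\pi i\langle \xi, X_nw_n\rangle}\bigr],
\]
where independence of the $\theta_n$ gives the factorization at each finite stage.

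For each factor, write $\bar\xi w_n=|\xi|\,|w_n|e^{i\alpha_n}$. Then $\langle \xi, X_nw_n\rangle=|\xi|\varphi_n\cos(2\pi\theta_n+\alpha_n)$. Since $\theta_n$ is uniform on $[0,1]$, the shift by $\alpha_n$ is immaterial modulo $1$, and a further shift turns $\cos$ into $\sin$. Therefore
\[
\mathbb{E}\bigl[e^{-2\pi i\langle \xi, X_nw_n\rangle}\bigr]=\int_0^1 e^{-2\pi i|\xi|\varphi_n\sin 2\pi\theta}\,d\theta=J_0(2\pi|\xi|\varphi_n(x,y)),
\]
using that $J_0$ is even, which holds because its integral representation is real (the cosine form).

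It remains to check that the infinite product $\prod_n J_0(2\pi|\xi|\varphi_n)$ makes sense and equals the limit above. This is automatic, since the finite products converge to the expectation by the previous step. It is also consistent: $\sum\varphi_n<\infty$ and $J_0(t)=1-O(t^2)$, so the product converges absolutely. Substituting into the Fubini identity gives the lemma.

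The only mildly delicate points are the Fubini and limit interchanges. Both are handled by boundedness, since all integrands have modulus at most $1$, together with the uniform convergence of the series.
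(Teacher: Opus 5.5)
Your proposal is correct and follows the same route as the paper. You expand $|\widehat{\mu}(\xi)|^2$ as a double integral over $A\times A$, move the expectation inside, factor over $n$ by independence, and identify each factor as $J_0(2\pi|\xi|\varphi_n(x,y))$ using translation invariance of the uniform phase. You also justify steps the paper leaves implicit: Fubini with a finite $\nu$, the infinite product as a bounded-convergence limit of finite products, and the $\cos\to\sin$ shift and sign conventions.
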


\begin{proof}
It is more convenient to write the Euclidean inner product as $\langle x, y\rangle = \mbox{Re} (x\overline{y})$. With this notation, we can expand
\[
|\widehat{\mu}(\xi)|^2=\iint_{\mathbb{R}^2\times\mathbb{R}^2} e^{-2\pi i\langle\xi, u-v\rangle}d\mu(u)d\mu(v)=\iint_{A\times A} \prod_{n=1}^{\infty} e^{-2\pi i|\xi|\varphi_n(x,y)\cos(\gamma_n-2\pi \theta_n)}d\nu(x)d\nu(y),
\]
where $\gamma_n=\gamma_n(\xi,x,y)$ is the argument of the complex number $\xi\cdot\overline{a_n(\phi_n(\lambda_nx)-\phi_n(\lambda_ny))}$. Since  $\theta_n$ are independently uniform distributions on $[0, 1]$, we have
\begin{align*}
\mathbb{E}[|\widehat{\mu}(\xi)|^2]&=\iint_{A\times A}\prod_{n=1}^\infty\mathbb{E}\Bigl[e^{-2\pi i|\xi|\varphi_n(x,y)\cos(\gamma_n-2\pi \theta_n)}\Bigr]d\nu(x)d\nu(y)\\
&=\iint_{A\times A} \prod_{n=1}^\infty\Bigl(\int_0^1e^{-2\pi i|\xi|\varphi_n(x-y)\cos(2\pi\theta)}d\theta \Bigr)d\nu(x)d\nu(y)\\
&=\iint_{A\times A}\prod_{n=1}^\infty J_0(2\pi|\xi|\varphi_n(x,y))d\nu(x)d\nu(y),
\end{align*}
where the second equality uses the fact that the integrand is periodic with period one. The last formula is exactly our desired formula.
\end{proof}

The following lemma gives the crucial estimate for the infinite product of the Bessel functions.
\begin{lemma}\label{lemma3.3}
Let $L,q,\delta$ be the constants for the locally uniformly bi-Lipschitz functions defined in \eqref{eq1.5}-\eqref{eq1.6}. Let also $\tau$ be the constant defined in (\ref{eq1.7}).

Fix $\ell\in\N$ and $k\geq \ell$. Then for each $j$ such that  $k-\ell\le j<k-1$, $\lambda_n\in [q^j,q^{j+1})$ and $|x-y|\in [\delta q^{-(k+1)}, \delta q^{-k})$, we have
\begin{equation}\label{eq3.4}
\varphi_n(x,y)\geq  \frac{L^{-1}\delta}{q^{\ell+1}}\cdot|a_n|.
\end{equation}
Moreover, for all $\epsilon>0$ and $r>0$,
\begin{equation}\label{eq3.5}
\prod_{j=k-\ell}^{k-1}\prod_{q^j\leq \lambda_n<q^{j+1}}|J_0(2\pi r\cdot\varphi_n(x,y))|\lesssim_{\epsilon} q^{\frac{k\ell(\tau+\epsilon)}{2}}\cdot r^{-\frac{\ell}{2}}.
\end{equation}
\end{lemma}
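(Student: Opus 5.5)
Both parts are direct estimates: \eqref{eq3.4} comes from the lower Lipschitz bound in \eqref{eq1.5}, and \eqref{eq3.5} comes from the pointwise Bessel bound \eqref{eq3.3} combined with \eqref{eq3.4}, with the definition of $\tau$ controlling the block sums.

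\textbf{Step 1: proof of \eqref{eq3.4}.} Take $\lambda_n\in[q^j,q^{j+1})$ with $j\le k-2$ and $|x-y|<\delta q^{-k}$. Then
\[
\lambda_n|x-y|<\delta q^{j+1-k}\le \delta q^{-1}<\delta,
\]
so \eqref{eq1.5} applies at the points $\lambda_nx,\lambda_ny$. It gives
\[
\varphi_n(x,y)\ge |a_n|L^{-1}\lambda_n|x-y|.
\]
Using $\lambda_n\ge q^j\ge q^{k-\ell}$ and $|x-y|\ge \delta q^{-(k+1)}$, we get $\lambda_n|x-y|\ge \delta q^{-\ell-1}$, which is \eqref{eq3.4}.

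\textbf{Step 2: reduce each Bessel factor to a power.} By \eqref{eq3.3}, $|J_0(t)|\le(1+t^2)^{-1/4}\le 1$. So we may keep only one convenient factor per block $j$ and bound all the others by $1$. Within a block, choose the index $n_j$ maximizing $|a_n|$. Since the block has length $\lambda\in[q^j,q^{j+1})$ and $\lambda_n$ increases with $\lambda_{n+1}/\lambda_n\le q$, the number of indices in the block could be large or zero. This is the main obstacle. A single factor gives
\[
|J_0(2\pi r\varphi_{n_j})|\le (2\pi r\varphi_{n_j})^{-1/2}\lesssim r^{-1/2}q^{(\ell+1)/2}|a_{n_j}|^{-1/2},
\]
and one needs $|a_{n_j}|\gtrsim s_j/\sqrt{\#\text{block}}$.

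\textbf{Preferred route: use all factors.} A cleaner approach is to use the whole block product. Note that $\prod_n (1+t_n^2)^{-1/4}\le (1+\sum_n t_n^2)^{-1/4}$, since $\prod(1+u_n)\ge 1+\sum u_n$. With $t_n=2\pi r\varphi_n$ and \eqref{eq3.4},
\[
\sum_{n\in\text{block }j} t_n^2\ \ge\ (2\pi r L^{-1}\delta q^{-\ell-1})^2 s_j^2 .
\]
Hence each block product is $\lesssim r^{-1/2}q^{(\ell+1)/2}s_j^{-1/2}$.

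By the definition of $\tau$ in \eqref{eq1.7}, for $j$ large we have $s_j\ge q^{-j(\tau+\epsilon)}\ge q^{-k(\tau+\epsilon)}$. For the finitely many small $j$, the bound holds with a constant depending on $\epsilon$. Blocks with $s_j=0$ are impossible for large $j$, since $\tau<\infty$. So each block product is $\lesssim_\epsilon r^{-1/2}q^{(\ell+1)/2}q^{k(\tau+\epsilon)/2}$.

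\textbf{Step 3: multiply over the blocks.} Taking the product over the $\ell$ values of $j$ gives
\[
r^{-\ell/2}\,q^{\ell(\ell+1)/2}\,q^{k\ell(\tau+\epsilon)/2}.
\]
The factor $q^{\ell(\ell+1)/2}$ depends only on $\ell$, which is fixed, so it is absorbed into the implicit constant. That yields \eqref{eq3.5}.

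Two points need care here. First, the range stated is $j<k-1$, which gives $\ell-1$ blocks; the missing block only improves the bound when $r$ is large, and when $r$ is small one simply adjusts the constant. Second, the $\epsilon$-dependence of the constant comes from the finitely many early $k$.
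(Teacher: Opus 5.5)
Your argument is essentially the paper's own. It uses the Bessel bound \eqref{eq3.3}, then $\prod(1+u_n)\ge 1+\sum u_n$ to collapse each block into $c\,r^{-1/2}s_j^{-1/2}$ via \eqref{eq3.4}, and finally $s_j\gtrsim_\epsilon q^{-j(\tau+\epsilon)}$ from the definition of $\tau$.

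The one thing to fix is your closing remark on the block $j=k-1$.
\begin{itemize}
\item Bounding that block by $1$ leaves only $r^{-(\ell-1)/2}$ decay. For $r>q^{k(\tau+\epsilon)}$ this is weaker than \eqref{eq3.5}, since the ratio of the two bounds is $(q^{k(\tau+\epsilon)}/r)^{1/2}\to 0$. No constant independent of $r$ can repair this, so the ``adjust the constant'' patch fails.
\item Large $r$ is exactly the regime the lemma is used for, namely the integral $U_k$ in Theorem~\ref{thm3.4}.
\item No patch is actually needed. Your Step~1 computation goes through verbatim for $j=k-1$: there $\lambda_n|x-y|<q^{k}\cdot\delta q^{-k}=\delta$, so \eqref{eq1.5} applies, and $\lambda_n|x-y|\ge\delta q^{-2}\ge\delta q^{-(\ell+1)}$.
\item Hence \eqref{eq3.4} holds on all $\ell$ blocks $k-\ell\le j\le k-1$. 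The restriction $j<k-1$ in the statement is just stronger than necessary, and the paper's proof implicitly uses all $\ell$ blocks.
\end{itemize}

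A smaller point: your constant for the finitely many small $j$ is finite only because $s_j>0$ for every $j\ge 0$. This holds because $a_n\neq 0$ and every block is nonempty. Blocks cannot be empty because $\lambda_1=1$, $\lambda_{n+1}\le q\lambda_n$, and $\lambda_n$ is unbounded (which $\tau<\infty$ forces), so the sequence cannot skip any interval $[q^j,q^{j+1})$. So your aside in Step~2 that a block ``could be zero'' does not occur under \eqref{eq1.6}.
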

\begin{proof}
By the locally uniformly bi-Lipschitz lower bound in \eqref{eq1.5} and the range of $\lambda_n$ and $j$,
$$
|\phi_n(\lambda_nx)-\phi_n(\lambda_ny)|\geq L^{-1} |\lambda_n||x-y|\ge \frac{L^{-1}}{q^{\ell+1}}\cdot q^{k+1}|x-y|.
$$
Using the range of $|x-y|$, we obtain
\begin{equation}\label{eq3.6}
|\phi_n(\lambda_nx)-\phi_n(\lambda_ny)|\geq\frac{L^{-1} \delta}{q^{\ell+1}}, \quad \mbox{if}\quad \frac{\delta}{q^{k+1}}<|x-y|\leq \frac{\delta}{q^k}.
\end{equation}
Hence, \eqref{eq3.4} follows.

To prove the second inequality, the bound  \eqref{eq3.3}  for the Bessel's function, we have
\begin{equation}\label{eq3.7}
|J_0(2\pi r\cdot\varphi_n(x,y))|\le (1+4\pi^2 c_1^2 r^2 |a_n|^2)^{-\frac{1}{4}}
\end{equation}
where $c_1 = L^{-1}\delta q^{-(\ell+1)}$ as in  \eqref{eq3.4}.
Note that $0\leq \tau<\infty$. By the definition of $\tau$, for all $\epsilon>0$, we have
\begin{equation}\label{eq3.8}
|s_k|\gtrsim_{\epsilon} q^{-k(\tau+\epsilon)}, \quad \forall k\ge 0.
\end{equation}
Using the elementary inequality $\prod_{k=1}^n(1+x_k)\ge 1+\sum_{k=1}^nx_k$ when all $x_k>0$,
$$
\prod_{j=k-\ell}^{k-1}\prod_{q^j\leq \lambda_n<q^{j+1}}|J_0(2\pi r\cdot\varphi_n(x,y))|\le \prod_{j=k-\ell}^{k-1}\bigl(\sum_{q^j\leq \lambda_n<q^{j+1}} 4\pi^2c_1^2r^2 |a_n|^2\bigr)^{-\frac{1}{4}} = c_2 \cdot \prod_{j=k-\ell}^{k-1} (r^{-\frac{1}{2}}s_j^{-\frac{1}{2}}).
$$
where $c_2 = (4\pi^2c_1^2)^{-\ell/4}$. Applying  \eqref{eq3.8}, the last product satisfies
$$
\prod_{j=k-\ell}^{k-1} (r^{-\frac{1}{2}}s_j^{-\frac{1}{2}})\lesssim_{\epsilon }q^{\frac{k\ell(\tau+\epsilon)}{2}}\cdot r^{-\ell/2},
$$
which shows that (\ref{eq3.5}) holds as desired.
\end{proof}

\subsection{Lower bound of Sobolev dimension} We now can establish the lower bound of the Sobolev dimension.

\begin{theorem}\label{thm3.4}
If $0\leq\tau<\infty$, then for any Borel set $A\subset\mathbb{R}$ with $\hd A>0$,
\[
\sd{S(A)} \geq \frac{\hd{A}}{\tau},\quad a.s.
\]
\end{theorem}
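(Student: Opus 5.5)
We will push a Frostman measure on $A$ forward by $S$ and show that its expected Sobolev integral is finite. Fix $0<s<\hd A$. By Frostman's lemma (after restricting to a compact subset of $A$ of positive $\mathcal{H}^s$-measure), there is a probability measure $\nu$ on $A$ with $\nu(B(x,\rho))\lesssim \rho^{s}$. Let $\mu=\mu_\Theta$ be the push-forward of $\nu$ under $S$, as in Lemma \ref{lemma3.2}; $\mu$ is supported on $S(A)$. We aim to show that for every $t<s/\tau$ (any $t$ if $\tau=0$),
\[
\mathbb{E}\int_{\mathbb{R}^2}|\widehat{\mu}(\xi)|^2|\xi|^{t-2}\,d\xi<\infty .
\]
Then almost surely the integral is finite, so $\sd\mu\ge t$. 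Letting $t\uparrow s/\tau$ and $s\uparrow\hd A$ along countable sequences gives the claim almost surely. By Fubini and Lemma \ref{lemma3.2}, the expectation equals
\[
\iint_{A\times A}\int_{\mathbb{R}^2}|\xi|^{t-2}\prod_n \bigl|J_0(2\pi|\xi|\varphi_n(x,y))\bigr|\,d\xi\,d\nu(x)d\nu(y).
\]
The region $|\xi|\le1$ contributes a finite amount, since $|J_0|\le1$ and $t>0$.

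Next we decompose the diagonal. Pairs with $|x-y|\ge \delta/q$ are handled using a fixed-scale version of the estimate in Lemma \ref{lemma3.3} (with $k$ bounded). For the rest, we split into shells $D_k=\{\delta q^{-(k+1)}\le|x-y|<\delta q^{-k}\}$. The Frostman property gives $\nu\times\nu(D_k)\lesssim q^{-ks}$. On $D_k$ we bound every factor with $\lambda_n\ge q^{k-1}$ by $1$. We keep only the blocks $j\in[k-\ell,k-1)$ and apply \eqref{eq3.5} with $r=|\xi|$. One checks that the lower bound \eqref{eq3.4} is valid uniformly there, with constant $c_1=L^{-1}\delta q^{-(\ell+1)}$. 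This requires $\lambda_n|x-y|\le\delta$, so that the local bi-Lipschitz bound applies; that holds because $\lambda_n<q^{k}$ and $|x-y|<\delta q^{-k}$. The factors from blocks $j<k-\ell$ are dropped.

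The main point is to choose $\ell$ well. Using the bound $\min\{1,\,C_\ell q^{k\ell(\tau+\epsilon)/2}r^{-\ell/2}\}$, the radial integral is
\[
\int_1^\infty r^{t-1}\min\{1,\,C_\ell q^{k\ell(\tau+\epsilon)/2}r^{-\ell/2}\}\,dr .
\]
With $\ell>2t$ fixed, this converges at infinity. Its value is $\lesssim_\ell q^{kt(\tau+\epsilon)}$, as seen by splitting at $r=q^{k(\tau+\epsilon)}$: below that point the integrand is at most $r^{t-1}$, and above it the tail is comparable. Summing over $k$,
\[
\sum_k q^{-ks}\,q^{kt(\tau+\epsilon)}<\infty
\]
as soon as $t(\tau+\epsilon)<s$. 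Since $\epsilon$ is arbitrary, this covers every $t<s/\tau$.

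The subtle step is that $\ell$ must be fixed independently of $k$. The constant $c_2$ in Lemma \ref{lemma3.3}, and the bound \eqref{eq3.8} used inside it, depend only on $\ell$ and $\epsilon$, so this is legitimate. One must also ensure that $k\ge\ell$ for all but finitely many shells; the finitely many small $k$ are absorbed by the separated-pairs estimate. The remaining thing to check is that the "$\lesssim_\epsilon$" in \eqref{eq3.8} is uniform in $k$; it is, since $s_j>0$ for all $j$ because $a_n\neq0$. In the case $\tau=0$, the same computation gives finiteness for every $t$, so $\sd S(A)=\infty$.
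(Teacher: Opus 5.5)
\textbf{There is a genuine gap: the separated pairs and small shells.} Your scheme is the paper's: Frostman measure, push-forward, Lemma \ref{lemma3.2}, shells in $|x-y|$, splitting the radial integral at $q^{k(\tau+\epsilon)}$, and keeping $\ell$ blocks via Lemma \ref{lemma3.3}. The step that fails is disposing of the pairs with $|x-y|\ge\delta/q$, and of the shells $D_k$ with $k<\ell$, by a ``separated-pairs estimate''.

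The only source of decay in $\prod_n J_0(2\pi r\varphi_n(x,y))$ is the lower bound \eqref{eq3.4}. It comes from \eqref{eq1.5}, so it needs $\lambda_n|x-y|\le\delta$.
\begin{itemize}
\item Since $\lambda_n\ge\lambda_1=1$, no factor at all is controlled when $|x-y|>\delta$.
\item On $D_k$ only the $n$ with $\lambda_n\le\delta/|x-y|<q^{k+1}$ are controlled. In general that is about $k$ factors, e.g.\ when each block contains a single $\lambda_n$. Each is worth only $r^{-1/2}$ in the argument of Lemma \ref{lemma3.3}.
\item So for $k$ small compared with $2t$, the available bound is not integrable against $r^{t-1}$ at infinity.
\end{itemize}
This is not merely a weakness of the bookkeeping. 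Take $\phi_n(u)=e^{2\pi iu}$ and integer $\lambda_n$. Then $\varphi_n(x,x+1+h)=|a_n|\,|e^{2\pi i\lambda_n h}-1|\to0$ as $h\to0$ for every fixed $n$. So if $A$ contains points at distance close to $1$, no ``fixed-scale'' lower bound of the form \eqref{eq3.4} holds on $\{|x-y|\ge\delta/q\}$. Such pairs behave like near-diagonal pairs. Controlling them would need their own multiscale analysis, and that relies on structure (periodicity) that general $\phi_n$ need not have.

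\textbf{The repair is the localization the paper performs.}
\begin{itemize}
\item After fixing $s$, $t$ and hence $\ell$, cover $A$ by countably many intervals of length $\delta q^{-\ell}$.
\item By countable stability, some piece $A'=A\cap I$ has $\hd A'>s$.
\item $\sd S(A)\ge\sd S(A')$, because any measure supported on $S(A')$ is supported on $S(A)$.
\item Take the Frostman measure on $A'$. Then every off-diagonal pair lies in some $D_k$ with $k\ge\ell$, no separated-pairs estimate is needed, and the rest of your argument is exactly the paper's.
\end{itemize}
Your choice $\ell>2t$ instead of $\ell>2/(\tau+\epsilon)$ is immaterial. The dependence of $A'$ on $t$ is also harmless, since you only need countably many pairs $(s,t)$.

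One small bookkeeping point: you discard the block $j=k-1$ but then invoke the $\ell$-block bound \eqref{eq3.5}. Either keep that block, which is legitimate since, as you note, $\lambda_n<q^k$ there, or take $\ell>2t+1$.
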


\begin{proof}
Let $\epsilon>0$ and let $\ell>2/(\tau+\epsilon)$ be a fixed integer, and let $\delta, q$ be the constants given by \eqref{eq1.5}-\eqref{eq1.6}. For $\alpha\nearrow\hd A$, we may assume that the diameter of $A$ does not exceed $\delta/q^{\ell}$, otherwise, we replace $A$ by a subset of $A$ with  Hausdorff dimension $>\alpha$. As the Hausdorff dimension is countable stable under union, we can always do such replacement. The Frostman's lemma \cite[Theorem 2.7]{Mattila}  tells us that there exists a  Borel measure $\nu$ supported on $A$ such that
\begin{equation}\label{eq3.9}
\nu (B(x,r))\lesssim  r^\alpha,\quad \forall x\in\mathbb{R},\; r>0.
\end{equation}
The assumption $\{a_n\}\in l^1$ ensures the continuity of $S(x)$; consequently,
$\nu$ induces a well-defined push-forward measure  $\mu=S_{\ast}\nu$ supported on $S(A)$:
\begin{equation}\label{eq3.10}
\mu(E)=\nu(\{x\in A\colon S(x)\in E\}),\quad \forall\; \text{Borel}\; E\subset \mathbb{R}^2.
\end{equation}
To obtain a lower bound for the Sobolev dimension, we need to take the expectation of the integral in (\ref{eq3.1}) and prove that if $s<\alpha/(\tau+\epsilon)$, then
\[
\int_{\mathbb{R}^2}|\xi|^{s-2}\mathbb{E}[|\widehat{\mu}(\xi)|^2]d\xi<\infty.
\]

Let
\[
A_k=\Bigl\{(x,y)\in A\times A: \frac{\delta}{q^{k+1}}< |x-y|\leq\frac{\delta}{q^k}\Bigr\}
\]
for $k\geq \ell$ (since we assume the diameter of $A\leq \delta/q^{\ell}$). Let $A_{k,x}=\{y\in A\colon (x, y)\in A_k\}$.  \eqref{eq3.9} implies that $\nu(A_{k,x})\lesssim q^{-\alpha k}$ for all $x\in A$. As $\nu$ is a $\sigma$-finite measure, by the Fubini's theorem, we have
\[
(\nu\times\nu)(A_k)=\int \nu(A_{k, x}) d\nu(x) \lesssim q^{-\alpha k}
\]
and $\nu\times \nu(\{(x, y)\in A\times A\colon x=y\})=0$.  Then we apply Lemma \ref{lemma3.2} to measure $\mu$ and decompose the energy into two parts via the polar coordinates:
\begin{equation}\label{eq3.11}
\int_{\mathbb{R}^2}|\xi|^{s-2}\mathbb{E}[|\widehat{\mu}(\xi)|^2]d\xi=2\pi\sum_{k=\ell}^{\infty}\iint_{A_k}(B_{k}(x,y)+U_{k}(x,y))d\nu(x)d\nu(y),
\end{equation}
where
\[
B_{k} (x,y)  = \int_{0}^{q^{(\tau+\epsilon)k}} r^{s-1}\prod_{n=1}^\infty J_0(2\pi r\cdot\varphi_n(x,y))dr,\quad  U_{k} (x,y)  = \int_{q^{(\tau+\epsilon)k}}^{\infty} r^{s-1}\prod_{n=1}^\infty J_0(2\pi r\cdot\varphi_n(x,y))dr.
\]
For the first integral, as $|J_0|\leq 1$,
\begin{equation}\label{eq3.12}
|B_{k}(x,y)|\leq \int_{0}^{q^{(\tau+\epsilon)k}}r^{s-1}dr=\frac{q^{sk(\tau+\epsilon)}}{s}.
\end{equation}
Now, we  estimate the second integral  by rearranging the infinite product into blocks $\{n: q^j\leq \lambda_n< q^{j+1}\}$ and keeping the product of $\ell$ blocks. Using \eqref{eq3.5}  in Lemma \ref{lemma3.3},
\begin{align*}
|U_k(x,y)|\leq \int_{q^{k(\tau+\epsilon)}}^{\infty}r^{s-1}\prod_{j=k-\ell}^{k-1}\prod_{q^j\leq \lambda_n<q^{j+1}}|J_0(2\pi r\cdot \varphi_n(x,y))|~dr \lesssim_{\epsilon}q^{\frac{k\ell(\tau+\epsilon)}{2}}\int_{q^{k(\tau+\epsilon)}}^{\infty} r^{s-1-\ell/2}~ dr.
\end{align*}
We  notice that
\begin{equation}\label{eq3.13}
s<\frac{\alpha}{\tau+\epsilon}< \frac{1}{\tau+\epsilon}<\frac{\ell}{2}
\end{equation}
by our choice of $\ell$.  (\ref{eq3.13}) ensures the finiteness of the integral. Integrating it results in
\begin{equation}\label{eq3.14}
|U_k(x,y)|\lesssim_{\epsilon} q^{sk(\tau+\epsilon)}.
\end{equation}
Putting estimates (\ref{eq3.12}) and (\ref{eq3.14}) together into (\ref{eq3.11}), we have  if $s<\alpha/(\tau+\epsilon)$, then
\[
\int_{\mathbb{R}^2}|\xi|^{s-2}\mathbb{E}[|\widehat{\mu}(\xi)|^2]d\xi\lesssim_{\epsilon} \sum_{k=\ell}^{\infty}q^{sk(\tau+\epsilon)}\cdot\nu\times\nu(A_k)\lesssim_{\epsilon} \sum_{k=\ell}^{\infty} q^{(s(\tau+\epsilon)-\alpha)k} <+\infty.
\]
This shows that
\[
\sd S(A)\geq \frac{\hd A}{\tau+\epsilon}.
\]
The proof is now complete after letting $\epsilon\to 0$. \qedhere
\end{proof}

With this lower bound on the Sobolev dimension, we  complete  the proof for Theorem \ref{thm1.1}.

\begin{proof}[Proof of Theorem \ref{thm1.1}]
(i) If $0<\sigma<\tau<\infty$, there is nothing to prove if $\hd{A}=0$, so we may assume $\hd{A}>0$.  It follows from  \eqref{eq2.2}, \eqref{eq3.2} and Theorem \ref{thm3.4}   that
\[
\min\Bigl\{2,\, \frac{\hd A}{\tau}\Bigr\}\leq \hd S(A)\leq \min\Bigl\{2,\, \frac{\hd A}{\min\{\sigma, 1\}}\Bigr\},\quad a.s.
\]
(ii) If $\hd A>2\tau>0$, then $\sd  S(A)\geq 2$.  By Proposition \ref{prop3.1} (i), $S(A)$ has two-dimensional Lebesgue measure a.s.  (iii) Similarly, when $\hd A>4\tau>0$, we know that $\sd  S(A) > 4 $. By Proposition \ref{prop3.1} (ii), $S(A)$ has interior points a.s. (iv) If $\tau=0$, we  choose $\epsilon<\frac{1}{4}\hd A$, then by  Theorem \ref{thm3.4}, $\sd  S(A) > 4 $. Hence $S(A)$ has interior points a.s.
\end{proof}
\begin{remark}
The at most exponential growth condition \eqref{eq1.6} of $\{\lambda_n\}$  ensures that all blocks $\{n\geq 1: q^k\leq \lambda_n<q^{k+1}\}$ are non-empty, thereby obtaining an upper bound for $U_k$. If $\{\lambda_n\}$ has super-exponential growth, then infinitely many blocks will become empty, and the product terms in \eqref{eq3.5} will become trivially constant. This may cause  the integral in $U_k$ to be infinite for all $s > 0$.
\end{remark}

\subsection{Further Examples} We end this section by demonstrating some similar series of the same nature. They are also studied in literature.

\begin{example}
Let $\lVert t \rVert=\min\{ |t-n|\colon n\in\mathbb{Z}\}$ be the distance from $t$ to the nearest integer. If we replace the cosine function in the real-valued Weierstrass function with the distance function $\lVert t \rVert$, then a Takagi-type function is obtained \cite{AK}.  It is easy to verify that  $\phi(t)=\lVert t\rVert+i\sin(2\pi t)$ is locally uniformly  bi-Lipschitz. See Figure \ref{fig5} for the curves $W_{\beta, \lambda}^{\phi}([0, 1])$.
\end{example}

\begin{figure}[ht]
\begin{subfigure}{0.3\textwidth}
\centering\includegraphics[height=3cm]{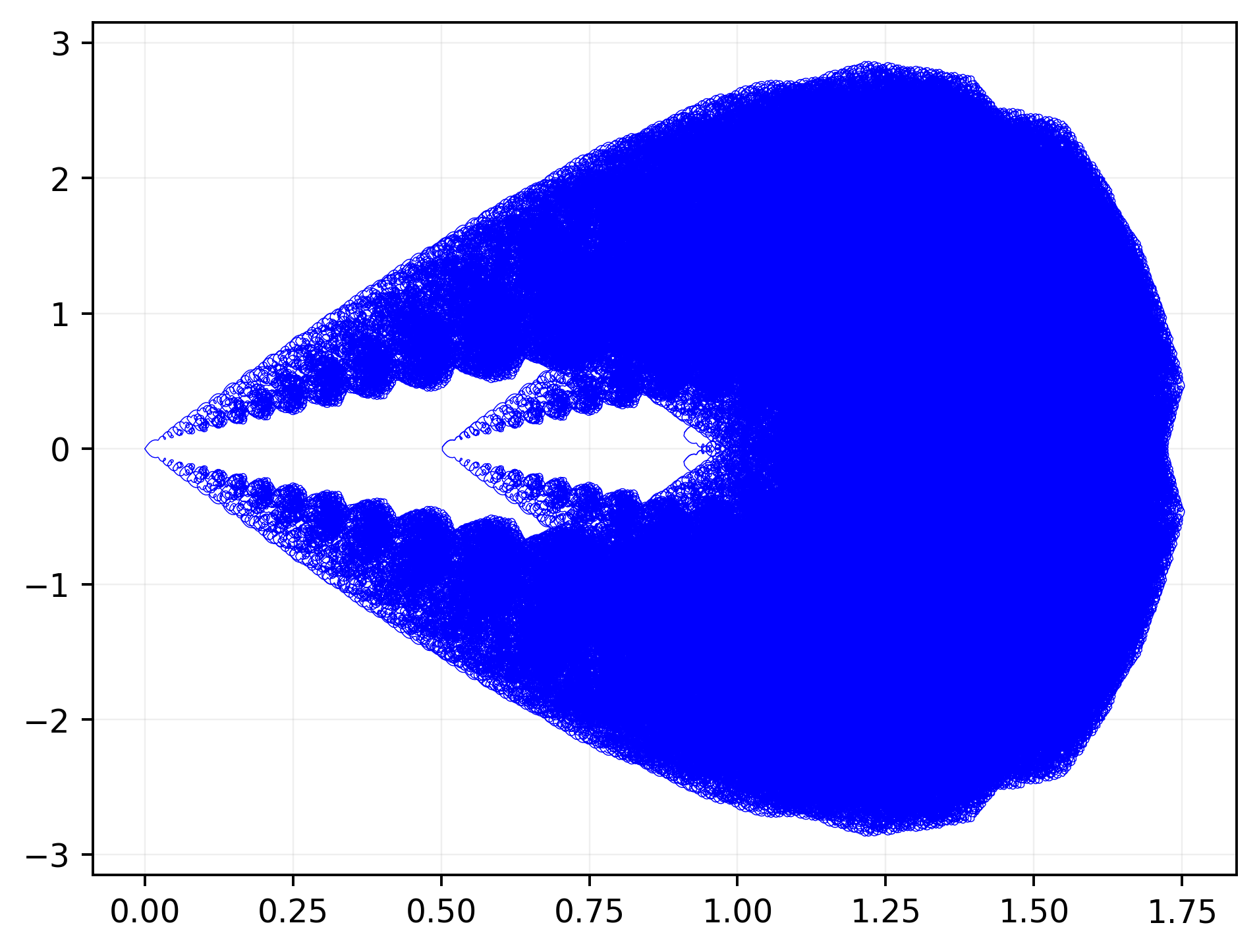}
\caption*{$\lambda=2, \beta=0.3$}
\end{subfigure}
\hfill%
\begin{subfigure}{0.3\textwidth}
\centering\includegraphics[height=3cm]{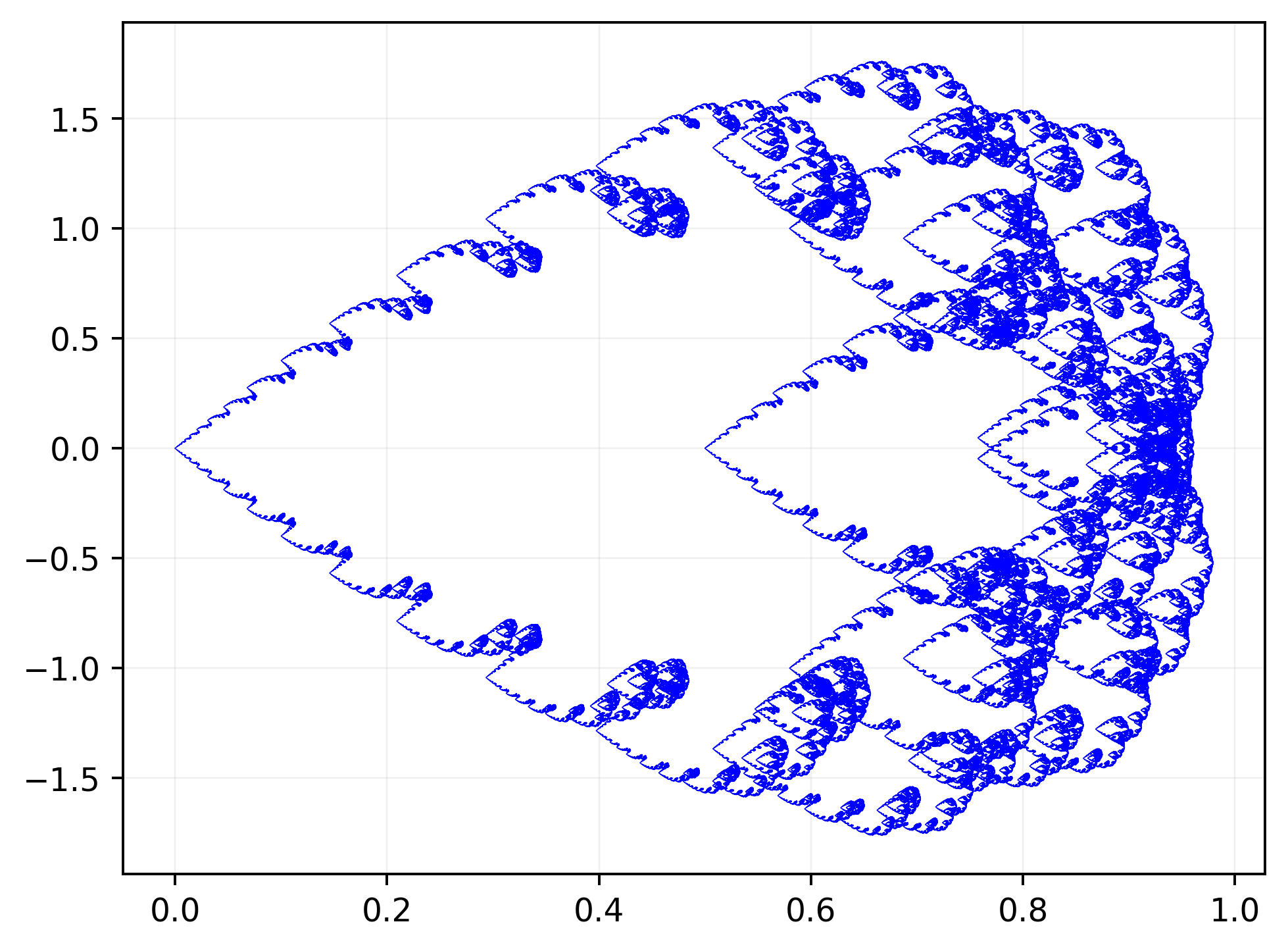}
\caption*{$\lambda=2, \beta=0.6$}
\end{subfigure}
\hfill%
\begin{subfigure}{0.3\textwidth}
\centering\includegraphics[height=3cm]{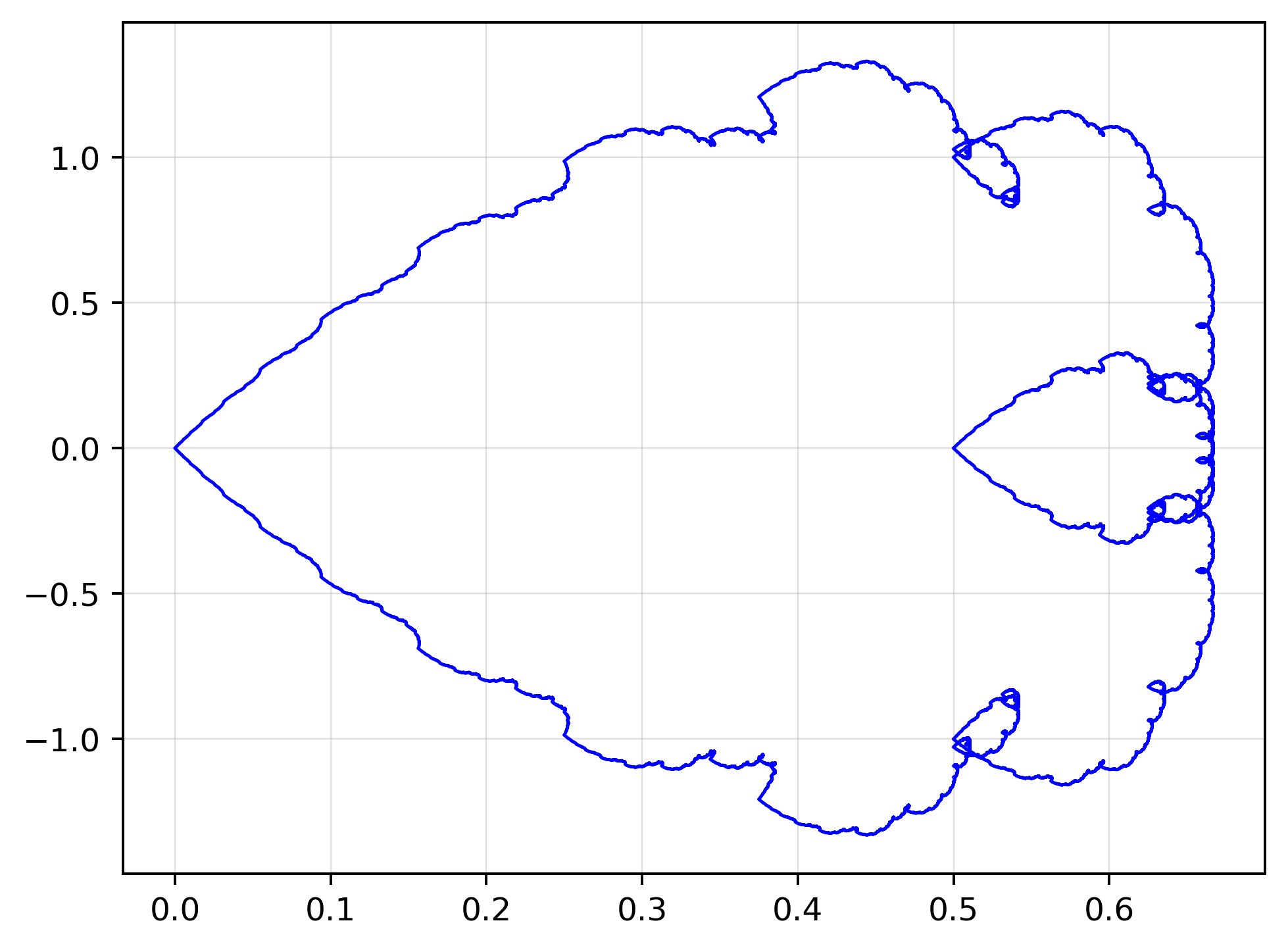}
\caption*{$\lambda=2, \beta=1$}
\end{subfigure}
\caption{Curves $W_{\beta,\lambda}^{\phi}([0, 1])$ for $\phi(t)=\lVert t\rVert+i\sin(2\pi t)$.} \label{fig5}
\end{figure}

\begin{example}
(i) Hardy \cite{Hardy} asserted that the series $\sum_{n=1}^{\infty}n^{-2}\cos b^nx$ is not H\"{o}lder continuous at any point. This complex series falls into the case  $\tau=0$ in Theorem \ref{thm1.1} (see Figure \ref{fig6}). It is known that the  Hadamard gap series $\sum_{k=1}^{\infty}k^{-p} e^{i n_k x} \,(p>1, \inf_k n_{k+1}/n_k>1)$ provides an example of a space-filling curve in   \cite{KWW}.

(ii) For any $\alpha>0$, there exists a Riemann-type series $\sum_{n=1}^{\infty}a_n X_n e^{2\pi i n^{\alpha}x}$ such that $\{a_n\}\in l^1$ and $\sigma=\tau=0$. The construction is as follows: for each block $\{n\geq 1: q^k\leq n^{\alpha}< q^{k+1}\}$, choose the coefficients $a_n$ to be a geometric sequence with first term $k^{-2}$ and common ratio $1/2$.  Then  $s_k \approx k^{-2}$, $\{a_n\}\in l^1$, and $\sigma=\tau=0$.
\end{example}

\begin{figure}[ht]
\centering\includegraphics[width=3.5cm]{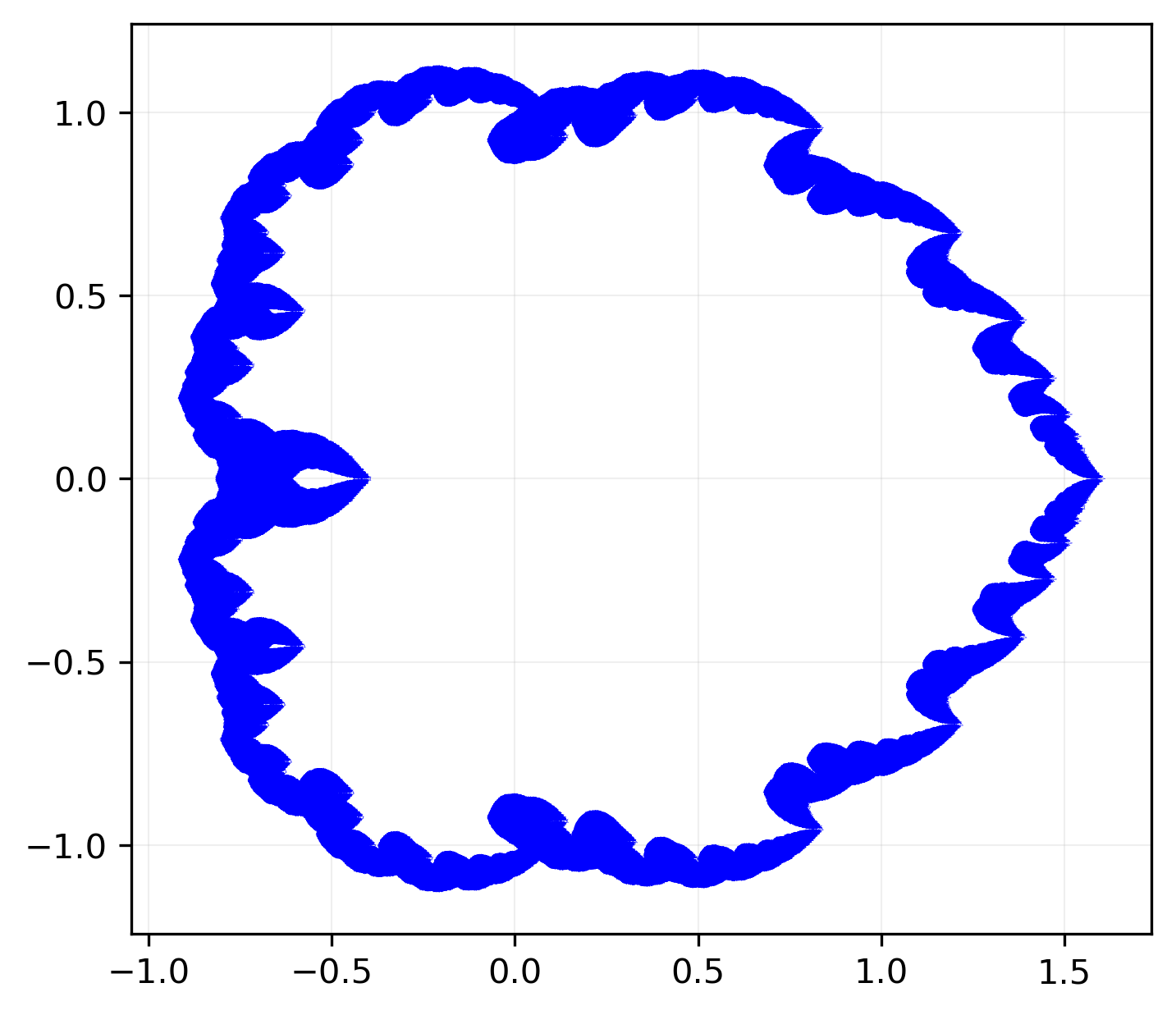}
\caption{Image of $[0, 1]$ under  the series $\sum_{n=1}^{\infty}n^{-2}e^{2\pi i2^nx}$ with $\tau=0$.}\label{fig6}
\end{figure}

\section{Graphs}\label{sec4}
In this section, we study the Hausdorff dimension of the graph of the random series $S(x)$ in \eqref{eq1.8}.  Let $X$ be a random vector in $\mathbb{R}^d$. To be consistent with the notation used earlier, we define the characteristic function of $X$ as
\[
\psi_X(\xi)=\mathbb{E}\bigl[e^{-2\pi i\langle\xi, X\rangle}\bigr],\quad \xi\in\mathbb{R}^d.
\]
By a standard argument in probability theory, namely the inversion formula for characteristic functions, if the characteristic function of a random variable is integrable, then the random variable admits a continuous bounded density function \cite[Theorem 1.3.6]{Sasvari} (see also \cite[Theorem 3.4]{Mattila}).   More precisely, we have the following theorem.
\begin{theorem}
If $\psi_X(\xi)\in L^1(\mathbb{R}^d)$, then $X$ has a bounded continuous density function $f_X$   given by  the inverse Fourier transform
\begin{equation}\label{eq4.1}
f_X(u)=\int_{\mathbb{R}^d}\psi_X(\xi)e^{2\pi i\langle\xi, u\rangle}d\xi,\quad u\in\mathbb{R}^d.
\end{equation}
\end{theorem}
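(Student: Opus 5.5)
Since $|\psi_X|\in L^1(\mathbb{R}^d)$, the integral in \eqref{eq4.1} converges absolutely for every $u$. So $f_X$ is well defined and satisfies $|f_X(u)|\le \lVert\psi_X\rVert_{L^1}$. Continuity of $f_X$ follows from dominated convergence: the integrand $\psi_X(\xi)e^{2\pi i\langle\xi,u\rangle}$ depends continuously on $u$ and is dominated by $|\psi_X|$. The substance of the statement is therefore that $f_X$ is a density of the law $P_X$ of $X$: it is nonnegative and $P_X(E)=\int_E f_X$ for every Borel set $E$.

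\textbf{Step 1 (smoothing).} Let $g_t(u)=t^{-d/2}e^{-\pi|u|^2/t}$, whose Fourier transform is $\widehat{g_t}(\xi)=e^{-\pi t|\xi|^2}$. The random vector $X+\sqrt{t/(2\pi)}\,Z$, with $Z$ standard Gaussian and independent of $X$, has law $P_X*g_t$ and characteristic function $\psi_X(\xi)e^{-\pi t|\xi|^2}$. That law has the density $f_t=P_X*g_t$, which is smooth, nonnegative and integrable. For this Gaussian case the inversion formula
\[
f_t(u)=\int_{\mathbb{R}^d}\psi_X(\xi)e^{-\pi t|\xi|^2}e^{2\pi i\langle\xi,u\rangle}\,d\xi
\]
follows directly from Fubini, because $\psi_X e^{-\pi t|\cdot|^2}$ is integrable. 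Concretely, one expands
\[
f_t(u)=\int g_t(u-v)\,dP_X(v),
\]
writes $g_t(u-v)$ as the inverse Fourier transform of the Gaussian, and interchanges the two integrals.

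\textbf{Step 2 (limit).} Let $t\to0$. Dominated convergence with dominating function $|\psi_X|$ gives $f_t(u)\to f_X(u)$ for every $u$, and moreover $|f_t|\le\lVert\psi_X\rVert_{L^1}$ uniformly in $t$. Each $f_t\ge 0$, so the pointwise limit $f_X$ is real and nonnegative.

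\textbf{Step 3 (identification).} Fix $h\in C_c(\mathbb{R}^d)$. On one side,
\[
\int h\,f_t\,du=\int (h*\tilde g_t)\,dP_X\to\int h\,dP_X,
\]
since $h*\tilde g_t\to h$ uniformly and $P_X$ is a probability measure; here $\tilde g_t(w)=g_t(-w)$, which equals $g_t$ because $g_t$ is even. On the other side, $\int h f_t\to\int h f_X$ by bounded convergence, using the uniform bound from Step 2 and the compact support of $h$. Hence $\int h\,dP_X=\int h f_X$ for all $h\in C_c$. The Riesz representation theorem then identifies $P_X=f_X\,du$. This also shows $\int f_X=1$, by monotone convergence applied to $h\nearrow 1$.

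\textbf{Expected obstacle.} The only delicate point is Step 3: passing to the limit under the integral on the $f_t$ side requires the uniform sup bound, and nothing stronger. The argument deliberately avoids assuming $f_X\in L^1$ beforehand; integrability of $f_X$ is obtained at the end from the identification. Since the paper only invokes this as a standard fact, citing \cite[Theorem 1.3.6]{Sasvari} would suffice; the sketch above is the self-contained route.
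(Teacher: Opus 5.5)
Your proof is correct, and since the paper does not prove this statement at all, it is a self-contained alternative to a citation rather than a variant of the paper's argument. The paper quotes the result as the standard inversion theorem for characteristic functions, citing \cite[Theorem 1.3.6]{Sasvari} and \cite[Theorem 3.4]{Mattila}.

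The textbook proofs behind such citations usually take one of two routes:
\begin{enumerate}[(a)]
\item L\'evy's inversion formula on boxes: rule out atoms, write $P_X$ of a small box as an absolutely convergent integral against $\psi_X$, and differentiate.
\item The Parseval-type identity $\int\widehat{\varphi}\,dP_X=\int\varphi\,\psi_X\,d\xi$ for Schwartz $\varphi$, combined with Fourier inversion on $\mathcal{S}$, which identifies $P_X$ with $f_X\,du$ as distributions.
\end{enumerate}
Your Gaussian regularization needs only the explicit transform of $g_t$ and the approximate-identity property of $g_t$. It also has a real advantage: nonnegativity of $f_X$ comes for free from $f_t\ge 0$, instead of being read off after $P_X=f_X\,du$ is established. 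The cited routes, in exchange, are shorter once the general inversion machinery is available.

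Two small remarks on your write-up.
\begin{itemize}
\item In Step 1, the interchange of integrals is justified by the integrability of $e^{-\pi t|\xi|^2}$ with respect to $d\xi\times dP_X$. The hypothesis $\psi_X\in L^1$ is not used there; it is used only in Steps 2 and 3.
\item Riesz uniqueness applies because both $P_X$ and $f_X\,du$ are locally finite Borel measures, the latter since $f_X$ is bounded. Once it gives $P_X=f_X\,du$, you get $\int f_X=P_X(\mathbb{R}^d)=1$ directly, so the final monotone-convergence step is redundant.
\end{itemize}
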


 For any given $x\neq y$, consider $S(x)-S(y)$ as a complex random variable.  Its characteristic function is easily computed as
\[
\psi_{x,y}(\xi)=\mathbb{E}\bigl[e^{-2\pi i\langle\xi, S(x)-S(y)\rangle}\bigr]=\prod_{n=1}^{\infty}J_0\bigl(2\pi|\xi|\cdot|a_n(\phi_n(\lambda_nx)-\phi_n(\lambda_ny))|\bigr).
\]
Let $f_{x,y}(u)$ be the density function of $S(x)-S(y)$, its upper bound can be given by \eqref{eq4.1}.

\begin{proposition}\label{prop4.2}
Let $0<|x-y|\leq \delta/q^{5}$. If $0\leq \tau<\infty$, then for any sufficiently small $\epsilon>0$,
\[
|f_{x,y}(u)|\lesssim_{\epsilon} |x-y|^{-2(\tau+\epsilon)},\quad u\in\mathbb{R}^2.
\]
\end{proposition}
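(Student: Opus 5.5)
The plan is to bound the density directly through the inversion formula \eqref{eq4.1}:
\[
|f_{x,y}(u)|\le \int_{\mathbb{R}^2}|\psi_{x,y}(\xi)|\,d\xi = 2\pi\int_0^\infty r\prod_{n=1}^\infty |J_0(2\pi r\varphi_n(x,y))|\,dr,
\]
with $\varphi_n$ as in Lemma \ref{lemma3.2}. Once this integral is shown to be finite, the inversion theorem also justifies the existence of the bounded continuous density. Choose $k$ with $\delta q^{-(k+1)}<|x-y|\le \delta q^{-k}$. The hypothesis $|x-y|\le\delta/q^5$ gives $k\ge 5$, so we may take $\ell=5$ blocks in Lemma \ref{lemma3.3}. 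The reason for $\ell=5$ is that the decay produced is $r^{-\ell/2}$, and we need $r\cdot r^{-\ell/2}$ to be integrable at infinity, which forces $\ell/2>2$, i.e.\ $\ell\ge 5$. This explains the constant $q^5$ in the statement.

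Split the radial integral at $R=q^{k(\tau+\epsilon)}$, following the proof of Theorem \ref{thm3.4}.

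\textbf{Low range.} On $[0,R]$ we use $|J_0|\le1$, which gives the bound $R^2/2\approx q^{2k(\tau+\epsilon)}$.

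\textbf{High range.} On $[R,\infty)$ we discard all factors except the blocks $j=k-5,\dots,k-1$ (each factor has modulus at most $1$). Applying \eqref{eq3.5} with $\ell=5$ gives
\[
\int_R^\infty r\prod|J_0|\,dr\lesssim_\epsilon q^{5k(\tau+\epsilon)/2}\int_R^\infty r^{1-5/2}\,dr\approx q^{5k(\tau+\epsilon)/2}R^{-1/2}=q^{2k(\tau+\epsilon)}.
\]

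Both pieces are therefore $\lesssim_\epsilon q^{2k(\tau+\epsilon)}$. Since $q^{k}\approx \delta/|x-y|$, this yields $|f_{x,y}(u)|\lesssim_\epsilon |x-y|^{-2(\tau+\epsilon)}$ uniformly in $u$.

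The main point to check is that Lemma \ref{lemma3.3} really applies to all five blocks $k-5\le j\le k-1$. The stated range there is $k-\ell\le j<k-1$, but the inequality $\lambda_n|x-y|\le q^{j+1}\delta q^{-k}\le\delta$ still holds for $j=k-1$. So the bi-Lipschitz lower bound from \eqref{eq1.5} remains valid on the last block, and the proof of \eqref{eq3.4} goes through unchanged.

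One also needs \eqref{eq3.8} to bound each $s_j$ from below. This bound holds for large $j$, and for small $j$ it holds after enlarging the constant, since every block is non-empty by \eqref{eq1.6} and all $a_n\neq0$. Finally, if one prefers to avoid the $j=k-1$ issue entirely, one can instead use blocks $k-6,\dots,k-2$. This changes only constants, but it requires $k\ge6$, so for the single borderline case $k=5$ one argues directly with the cruder bound given by the same computation.
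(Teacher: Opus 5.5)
Your proposal is correct and follows the paper's proof: take $\ell=5$ in Lemma~\ref{lemma3.3}, split the radial integral of $|\psi_{x,y}|$ at $q^{k(\tau+\epsilon)}$, bound the low range trivially by $q^{2k(\tau+\epsilon)}$ and the high range via \eqref{eq3.5} by $q^{5k(\tau+\epsilon)/2}\int_{q^{k(\tau+\epsilon)}}^{\infty} r^{-3/2}\,dr\approx q^{2k(\tau+\epsilon)}$, then conclude from $|f_{x,y}(u)|\le\lVert\psi_{x,y}\rVert_{L^1}$. Your extra checks are valid and fill in details the paper leaves implicit: \eqref{eq3.4} also holds on the block $j=k-1$, which the product in \eqref{eq3.5} actually uses despite the stated range $j<k-1$, and \eqref{eq3.8} holds for all $j\ge 0$ because every block is nonempty and $a_n\neq 0$; the closing alternative with blocks $k-6,\dots,k-2$ is unnecessary, and for $k=5$ it would leave only four blocks, so it is best dropped.
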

Recall that both $\delta$ and $q$ are constants given by \eqref{eq1.5} and \eqref{eq1.6}.
\begin{proof}
Take $\ell = 5$ in Lemma \ref{lemma3.3}. Let  $k\geq 5$   such that $\delta/q^{k+1}<|x-y|\leq \delta/q^{k}$. We first turn into polar coordinates. Then, using \eqref{eq3.5} in Lemma \ref{lemma3.3},  we obtain
\begin{align*}
\int_{\mathbb{R}^2} |\psi_{x,y}(\xi)|d\xi&=2\pi\Bigl(\int_{0}^{q^{k(\tau+\epsilon)}}+\int_{q^{k(\tau+\epsilon)}}^{\infty}\Bigr)|\psi_{x,y}(r)|~rdr\\
&\lesssim_{\epsilon}\int_{0}^{q^{k(\tau+\epsilon)}}r~dr+\int_{q^{k(\tau+\epsilon)}}^{\infty}  r \cdot \prod_{j=k-5}^{k-1}\prod_{q^j\leq \lambda_n<q^{j+1}}\Bigl|J_0\bigl(2\pi r\cdot|a_n(\phi_n(\lambda_nx)-\phi_n(\lambda_ny))|\bigr)\Bigr|~ dr\\
&\lesssim_{\epsilon} q^{2k(\tau+\epsilon)}+q^{\frac{5k(\tau+\epsilon)}{2}}\int_{q^{k(\tau+\epsilon)}}^{\infty} r^{1-\frac{5}{2}}dr\lesssim_{\epsilon}|x-y|^{-2(\tau+\epsilon)}<\infty.
\end{align*}
The desired upper bound follows from  $|f_{x,y}(u)|\leq \lVert\psi_{x,y}(\xi)\rVert_{L^1}$ and (\ref{eq4.1}).
\end{proof}

Let $G_f(x)=\{(x, f(x))\colon x\in\mathbb{R}\}$ be the graph function of the  function $f:\mathbb{R}\to\mathbb{R}^d$.  By Theorem \ref{thm2.3}, if $0<\sigma<\infty$, then $S(x)$ is almost surely H\"{o}lder continuous with order $\min\{1,\sigma\}-\epsilon$ for any sufficiently small $\epsilon>0$.  Let $A\subset\mathbb{R}$ be a Borel set, it follows from   \cite[Theorem 10.6]{Kahane} that the Hausdorff dimension of $G_S( A)$ is bounded above by
\begin{equation}\label{eq4.2}
\hd  G_S( A) \le \bd  G_S( A)\leq \min\Bigl\{\frac{\dim A}{\min\{1,\sigma\}},\; \dim A+2(1-\min\{1,\sigma\})\Bigr\},\quad a.s.
\end{equation}

Note that the image $S(A)$ is actually the projection of the graph  $G_S( A)$.  The projection
\begin{align*}
P: G_S( A)& \longrightarrow S(A)\\
(x,S(x))& \longmapsto S(x)
\end{align*}
is Lipschitz, then
\[
\dim  S (A)=\dim P(G_S(A))\leq \dim  G_S( A).
\]
 Theorem \ref{thm1.1} implies that if $\tau\geq \frac{1}{2}\hd A$,
\begin{equation}\label{eq4.3}
 \hd G_S( A)\geq \hd S(A)\geq \frac{\hd A}{\tau}, \quad a.s.
\end{equation}
With the estimate of density function, we can determine the lower bound for  $0< \tau<\frac{1}{2}\hd A$.

\begin{proof}[Proof of Theorem \ref{thm1.2}] (i) By \eqref{eq4.2} and \eqref{eq4.3}, we only need to consider $0\leq  \tau<\frac{1}{2}\hd A$. Similar to Theorem \ref{thm3.4}, we assume that the diameter of $A$ does not exceed $\delta/q^{5}$. We choose $\alpha\nearrow\hd A$, and choose $\epsilon\searrow 0$ such that
\[
\tau+\epsilon<\frac{\alpha}{2}.
\]
Let $\nu$ be the Borel measure supported on $A$ given by \eqref{eq3.9}. Define the push-forward measure as
\[
\mu_1(E)=\nu(\{x\in A: G_S(x)\in E\}),\quad \forall\, \text{Borel}\, E\subset\mathbb{R}^3.
\]
It follows from Proposition \ref{prop4.2} that
\begin{align*}
\mathbb{E}[I_s(\mu_1)]&=\iint \mathbb{E}\bigl[(|x-y|^2+|S(x)-S(y)|^2)^{-\frac{s}{2}}\bigr]d\nu(x)d\nu(y)\\
&=\iint \int_{\mathbb{R}^2}\frac{f_{x,y}(u)}{(|x-y|^2+|u|^2)^{\frac{s}{2}}}du d\nu(x)d\nu(y)\lesssim_{\epsilon}\iint \Bigl(\int_{\mathbb{R}^2}\frac{|x-y|^{-2(\tau+\epsilon)}}{(|x-y|^2+|u|^2)^{\frac{s}{2}}}du \Bigr)d\nu(x)d\nu(y)\\
&=2\pi\iint|x-y|^{-2(\tau+\epsilon)} \Bigl(\int_{0}^{\infty}\frac{r  dr}{(|x-y|^2+r^2)^{\frac{s}{2}}} \Bigr)d\nu(x)d\nu(y).
\end{align*}
Let $A_k=\{(x,y)\in A: \delta/q^{k+1}<|x-y|\leq \delta/q^k\}$. If $2<s<2(1-\tau-\epsilon)+\alpha$, by the change of variables $t=r^2/|x-y|^2$, we obtain
\begin{align*}
\mathbb{E}[I_s(\mu_1)]&\leq \pi\sum_{k=5}^{\infty}\iint_{A_k}|x-y|^{2-2(\tau+\epsilon)-s}d\nu(x)d\nu(y)\int_{0}^{\infty}\frac{dt}{(1+t)^{\frac{s}{2}}}\\
&\lesssim_{\epsilon}\sum_{k= 5}^{\infty}\iint_{A_k}|x-y|^{2-s-2(\tau+\epsilon)}d\nu(x)d\nu(y)\lesssim_{\epsilon} \sum_{k= 5}^{\infty}q^{k(2\tau+2\epsilon-2+s-\alpha)}<\infty,
\end{align*}
where the last integral on the right-hand side of the first inequality is finite for $s>2$.
This implies that
\[
\hd G_S( A)\geq 2(1-\tau-\epsilon)+\alpha.
\]
The arbitrariness of $\epsilon$ and $\alpha$ completes the proof.

(ii) If $\tau=0$, the above argument shows that
\[
\hd G_S( A)\geq 2+\hd A.
\]
The upper bound is easily obtained since $G_S( A)=\{(x, G(x))\colon x\in A\}\subset A\times \mathbb{R}^2$.
\end{proof}

\begin{remark}
Note that $\hd A+2-2\tau<\frac{1}{\tau}\hd A$ if and only if $\tau>1$ or $\tau<\frac{1}{2}\hd A$. To avoid the minimum value of the lower bound in Theorem \ref{thm1.2} being negative, we restrict $\tau\leq 1$. If $\tau>1$, the lower bound is given by \eqref{eq4.3}.
\end{remark}

\section{Other random series}\label{sec5}

\subsection{Random Weierstrass-Mandelbrot-type series}\label{sec5.1}
We say that a function $f$ has the \emph{scaling property} if there exist some constants $a, b>0$ such that $f(ax)=bf(x)$ for all $x$. One can readily check that the Weierstrass function $W_{\beta, \lambda}(x)=\sum_{n=1}^{\infty}\lambda^{-\beta n}e^{2\pi i\lambda^nx}$ does not have {scaling property}. Mandelbrot \cite[p.328]{Mandelbrot}  discovered  that {scaling property} could be achieved  by adding the low-frequency terms corresponding to negative indices:
\[
W(x)=\sum_{n=-\infty}^{\infty}\lambda^{-\beta n} (1-e^{2\pi i\lambda^n x}),\quad \lambda>1, \quad 0<\beta<1, \quad x\in \mathbb{R}.
\]
This function is known as the Weierstrass-Mandelbrot function and satisfies the scaling  property  $W(\lambda x)=\lambda^{\beta} W(x)$.

Berry and Lewis \cite{BL}  observed that there is some connection between the limit of the Steinhaus Weierstrass-Mandelbrot function and the fractional Brownian motion; Szulga and Molz \cite{SM} proved the Berry-Lewis's limit theorem.  Szulga \cite{Szulga} showed that  the Steinhaus  Weierstrass-Mandelbrot function and the fractional Brownian motion share the Hausdorff dimension of paths and graphs, i.e.,
\[
\hd W([0, 1])=\min\{2,\; \beta^{-1}\},\quad \hd G_W([0, 1])=\min\{\beta^{-1},\; 3-2\beta\}.
\]

In this section, we replace $e^{2\pi i x}$ by boundedness, uniformly locally bi-Lipschitz functions  $\phi_n(x):\mathbb{R}\to\mathbb{C}$ with $\phi_n(0)=0\,(n\in\mathbb{Z})$,  and still denote the random Weierstrass-Mandelbrot-type series by $W(x)$:
\begin{equation}\label{eq5.1}
W(x)=\sum_{n=-\infty}^{\infty}\lambda^{-\beta n} X_n\phi_n(\lambda^nx),\quad \lambda>1,\;\; 0<\beta<1,\;\;x\in\mathbb{R},
\end{equation}
where $\{X_n\}$ is still a Steinhaus sequence. If $\phi_n(t)=1-e^{2\pi i t}$, $W(x)$ is the Steinhaus Weierstrass-Mandelbrot function in \cite{Szulga}. We  can generalize  the dimension results in \cite{Szulga} to Corollary \ref{coro1.3} for  the random Weierstrass-Mandelbrot-type series \eqref{eq5.1}. First, we obtain the H\"{o}lder property.

\begin{proposition}\label{prop5.1}
$W(x)$ is $\beta$-H\"{o}lder continuous for every  sample path $\Theta$.
\end{proposition}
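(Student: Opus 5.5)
The claim is deterministic: since $|X_n|=1$, the random phases play no role. I would prove the bound $|W(x)-W(y)|\lesssim |x-y|^\beta$ for every sample path, and make it uniform on bounded intervals (or global, if one is careful) using only the uniform bounds on $\phi_n$. Set $M=\sup_n\lVert\phi_n\rVert<\infty$ and $h=|x-y|$.

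\textbf{Step 1: convergence of $W$.} For $n\ge 0$, the term $\lambda^{-\beta n}|\phi_n(\lambda^n x)|\le M\lambda^{-\beta n}$ is summable. For $n<0$ and fixed $x$, we have $|\lambda^n x|\le\delta$ once $n$ is negative enough. Then $\phi_n(0)=0$ and \eqref{eq1.5} give
\[
|\phi_n(\lambda^n x)|\le L\lambda^n|x|,
\]
so the term is at most $L|x|\lambda^{(1-\beta)n}$, which is summable over $n<0$ because $\beta<1$. Hence $W$ is well defined and continuous for every $\Theta$.

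\textbf{Step 2: splitting the increment.} For $0<h\le\delta$, choose the integer $N$ with $\lambda^{-N-1}<h/\delta\le\lambda^{-N}$, so $\lambda^N h\le\delta$. Then
\[
|W(x)-W(y)|\le \sum_{n\le N}\lambda^{-\beta n}|\phi_n(\lambda^n x)-\phi_n(\lambda^n y)|+\sum_{n>N}2M\lambda^{-\beta n}.
\]
For $n\le N$ we have $\lambda^n h\le\delta$, so the Lipschitz bound in \eqref{eq1.5} applies and each term is at most $L\lambda^{(1-\beta)n}h$. The low-frequency sum is therefore bounded by a geometric series with ratio $\lambda^{1-\beta}>1$, dominated by its top term:
\[
\sum_{n\le N}L\lambda^{(1-\beta)n}h\lesssim \lambda^{(1-\beta)N}h\approx h^{\beta}.
\]
The high-frequency tail is $\lesssim\lambda^{-\beta N}\approx h^\beta$. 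This uses $0<\beta<1$ in both directions: the low sum converges at $-\infty$ and is governed by its top, and the tail converges at $+\infty$.

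\textbf{Step 3: large $h$.} For $h>\delta$, on a bounded interval $W$ is bounded by Step 1, so the Hölder bound holds there after enlarging the constant. For a global statement I would instead use the scaling idea: $h>\delta$ corresponds to $N<0$, and the same splitting works verbatim because the index set runs over all of $\mathbb{Z}$. This gives $|W(x)-W(y)|\lesssim h^\beta$ for all $x,y$, with a constant depending only on $L,M,\delta,\lambda,\beta$.

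\textbf{Main obstacle.} The only delicate point is keeping the local Lipschitz condition applicable: it must be invoked only when $\lambda^n|x-y|\le\delta$, which is exactly what the choice of $N$ guarantees. One must also check that the low-frequency terms of $W$ converge, which needs $\phi_n(0)=0$ as in Step 1; the increments themselves do not need it.
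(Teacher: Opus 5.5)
Your proof is correct and follows the same argument as the paper. Both choose $N$ with $\lambda^N|x-y|\approx\delta$, apply the local Lipschitz bound for $n\le N$ and uniform boundedness for $n>N$, and sum the resulting geometric series; the only difference is that the paper first separates the negative-index part $W^-$ and shows it is (locally) Lipschitz, whereas you absorb those terms into a single low-frequency sum over all $n\le N$, which also gives you the estimate for $|x-y|>\delta$.
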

\begin{proof}
Denote
\[
W^{+}(x)=\sum_{n=0}^{+\infty}\lambda^{-\beta n}X_{n}\phi_{n}(\lambda^n x),\quad W^{-}(x)=\sum_{n=1}^{+\infty}\lambda^{\beta n} X_{-n}\phi_{-n}(\lambda^{-n} x).
\]
The condition \eqref{eq1.5} and $\phi_n(0)=0$ imply that $W^{-}(x)$ converges at every  $x$. Let $\delta$ be given by \eqref{eq1.5} and $0<|x-y|\leq \delta$, then
\[
|W^{-}(x)-W^{-}(y)|\leq \sum_{n=1}^{+\infty}\lambda^{\beta n} \cdot  \big|\phi_{-n}(\lambda^{-n}x)-\phi_{-n}(\lambda^{-n}y)\big|\lesssim |x-y|\sum_{n=1}^{\infty} \lambda^{-(1-\beta)n}.
\]
Since the last sum  converges,  $W^{-}$ is Lipschitz.

Let  $N\geq 1$ be the integer such that $\delta/\lambda^{N+1}<|x-y|\leq \delta/\lambda^N$, then
\[
|W^{+}(x)-W^{+}(y)|\leq \sum_{n=0}^N\lambda^{-\beta n}|\phi_n(\lambda^nx)-\phi_n(\lambda^ny)|+2\lVert\phi_n\rVert\sum_{n=N+1}^{\infty}\lambda^{-\beta n}\lesssim |x-y|^{-\beta}.
\]
The conclusion follows.
\end{proof}

It is well known that the graph of a function has the same Hausdorff dimension as that of its sum with any Lipschitz functions \cite{MW}.     We now prove that adding a independent Lipschitz function to $S(x)$ in \eqref{eq1.8} does not change the dimension of its images.

\begin{theorem}\label{thm5.2}
Let $f\colon\mathbb{R}\to\mathbb{C}$  be a function independent of $S(x)$ (it may be either random or deterministic).
If $f$ and $S+f$ have the same H\"{o}lder  exponent, then Theorems \ref{thm1.1} and \ref{thm1.2} still hold for  $S(x)+f(x)$.
\end{theorem}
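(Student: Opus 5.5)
The plan is to treat the upper and lower bounds separately. The upper bounds in Theorems \ref{thm1.1} and \ref{thm1.2} come only from the H\"{o}lder exponent, and the lower bounds come only from Fourier and density estimates in which $f$ enters as a phase.

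\emph{Upper bounds.} By hypothesis $S+f$ has the same H\"{o}lder exponent as $f$. By Theorem \ref{thm2.3}, $S$ is a.s.\ $\alpha$-H\"{o}lder for every $\alpha<\min\{\sigma,1\}$, and I read the hypothesis as saying that $S+f$ is a.s.\ H\"{o}lder of the same orders. Then the argument of \eqref{eq2.2} gives
\[
\hd (S+f)(A)\le \min\Bigl\{2,\frac{\hd A}{\min\{\sigma,1\}}\Bigr\}.
\]
The graph bound \eqref{eq4.2} (via \cite[Theorem 10.6]{Kahane}) also applies verbatim to $S+f$, since it uses nothing beyond the H\"{o}lder order of the function.

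\emph{Lower bounds.} I condition on $f$. By independence, for almost every realization of $f$ I can treat $f$ as deterministic and average only over $\Theta$. I take the Frostman measure $\nu$ on $A$ exactly as in Theorem \ref{thm3.4}, with $\operatorname{diam} A\le \delta/q^{\ell}$, and set $\mu=(S+f)_*\nu$. Then
\[
|\widehat{\mu}(\xi)|^2=\iint e^{-2\pi i\langle \xi, f(x)-f(y)\rangle}\, e^{-2\pi i\langle\xi,S(x)-S(y)\rangle}\,d\nu(x)\,d\nu(y).
\]
The deterministic phase factors out of the expectation, and it has modulus one. The computation of Lemma \ref{lemma3.2} therefore gives
\[
\mathbb{E}_\Theta|\widehat{\mu}(\xi)|^2\le \iint\prod_n \bigl|J_0(2\pi|\xi|\varphi_n(x,y))\bigr|\,d\nu(x)\,d\nu(y).
\]
The proof of Theorem \ref{thm3.4} used only absolute values of the Bessel product: it split into $B_k$ and $U_k$ and applied \eqref{eq3.5}. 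So it yields $\sd (S+f)(A)\ge \hd A/\tau$ for a.e.\ $f$, and by Fubini this holds a.s.\ jointly. Proposition \ref{prop3.1} and \eqref{eq3.2} then give parts (i)--(iii) and the $\tau=0$ clause of Theorem \ref{thm1.1} for $S+f$.

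\emph{Graphs.} Conditionally on $f$, the random variable $(S+f)(x)-(S+f)(y)$ is $S(x)-S(y)$ translated by the constant $f(x)-f(y)$. Its density is $f_{x,y}(u-(f(x)-f(y)))$, so the sup bound of Proposition \ref{prop4.2} holds unchanged. In the energy estimate for $\mu_1$ in the proof of Theorem \ref{thm1.2}, I bound the density by its supremum before integrating in $u$, so the translation is harmless and the same lower bound $\hd A+2-2\tau$ follows. The other lower bound, $\hd A/\tau$, comes from projecting the graph onto the image, as in \eqref{eq4.3}.

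\emph{Main obstacle.} The delicate point is measurability and independence. The conditioning argument needs $f$ to be jointly measurable and independent of the whole sequence $\Theta$, not merely of $S$ pointwise. It also needs the Fubini exchange of ``a.s.\ in $\Theta$ for a.e.\ $f$'' into a joint almost-sure statement. I would state this interpretation of independence explicitly. I would also note that continuity of $f$, which follows from it being H\"{o}lder, is what makes the push-forward measures well defined.
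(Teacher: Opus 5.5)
Your proposal is correct and follows the paper's proof. The upper bounds come only from the H\"{o}lder exponent of $S+f$. For the lower bounds, independence lets the $f$-phase be bounded in modulus by one, so the Bessel-product estimate behind Theorem~\ref{thm3.4} and the density bound of Proposition~\ref{prop4.2} go through unchanged. The paper factors the expectation into the product of the two characteristic functions rather than conditioning on $f$, which amounts to the same thing.

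Your reading of the hypothesis as ``$S+f$ is H\"{o}lder of the same orders as $S$'' is exactly what the paper's proof uses, and it is genuinely needed. Under the literal condition on $f$ and $S+f$, one could take $f$ to be an independent planar Brownian motion. Then $(S+f)([0,1])$ has Hausdorff dimension $2$, which contradicts the claimed upper bound $1/\sigma$ whenever $1/2<\sigma\le 1$.
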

\begin{proof}
(i)  Denote $g(x)=S(x)+f(x)$. Let $\nu$ and $\mu$  be the measures given by Theorem \ref{eq3.3}, and let $\mu_g=g_{\ast}\nu$ be the push-forward measure. Then by Theorem \ref{thm3.4}, if $s<\hd A/\tau$,
\begin{align*}
\int|\xi|^{s-2}\mathbb{E}[|\widehat{\mu_g}(\xi)|^2]d\xi&=\int|\xi|^{s-2}\iint \mathbb{E}\bigl[e^{-2\pi i\langle\xi, g(x)-g(y)}\bigr]d\xi\\
&=\int|\xi|^{s-2}\iint \mathbb{E}\bigl[e^{-2\pi i\langle\xi, f(x)-f(y)}\bigr] \mathbb{E}\bigl[e^{-2\pi i\langle\xi, S(x)-S(y)}\bigr]d\nu(x)d\nu(y)d\xi\\
&\leq \int|\xi|^{s-2}\iint \Bigl|\prod_{n=1}^{\infty}J_0(2\pi|\xi|\cdot|a_n(\phi_n(\lambda_nx)-\phi_n(\lambda_ny))|)\Bigr|d\nu(x)d\nu(y)d\xi<\infty.
\end{align*}
This implies that
\[
\hd g(A)\geq \min\Bigl\{2,\;\frac{\hd A}{\tau}\Bigr\}.
\]
The upper bound for the dimension follows from the fact that $S(x)+f(x)$ and $S(x)$ share the same H\"{o}lder exponent.

(ii) The upper bound of dimension of the graph of $S(x)+f(x)$  remains unchanged, as the H\"{o}lder exponent does not change.  Similar to (i), Proposition \ref{prop4.2} remains valid for $S(x)+f(x)$; consequently, the lower bound is also preserved.
\end{proof}
Applying this theorem, we obtain the dimensions of the random Weierstrass-Mandelbrot-type series as well as of the random generalization of the Riemann function.
\begin{corollary}
Corollary \ref{coro1.3} still holds for the  random Weierstrass-Mandelbrot-type series $W(x)$ with $0<\beta<1$ and $\lambda>1$.
\end{corollary}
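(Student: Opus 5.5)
Write $W=W^{+}+W^{-}$ as in the proof of Proposition \ref{prop5.1}, and apply Theorem \ref{thm5.2} with $S=W^{+}$ and $f=W^{-}$. The term $W^{+}(x)=\sum_{n\ge 0}\lambda^{-\beta n}X_n\phi_n(\lambda^n x)$ is, after reindexing $n\mapsto n+1$, a series of the form \eqref{eq1.8}. Its frequencies are $\lambda_n=\lambda^{n-1}$, so $\lambda_1=1$ and $q=\lambda$. Its coefficients $a_n=\lambda^{-\beta(n-1)}$ are summable, and the $\phi_n$ are uniformly bounded and locally uniformly bi-Lipschitz by assumption. Each block $\{q^k\le\lambda_n<q^{k+1}\}$ contains exactly one index, so $s_k=\lambda^{-\beta k}$ and $\sigma=\tau=\beta\in(0,1)$. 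Hence Theorems \ref{thm1.1} and \ref{thm1.2} apply to $W^{+}$ and give exactly the formulas of Corollary \ref{coro1.3} for $W^{+}$.

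The term $W^{-}$ depends only on $\{\theta_{-n}\}_{n\ge1}$, which is independent of $\{\theta_n\}_{n\ge0}$, so $f=W^{-}$ is independent of $S=W^{+}$. By the proof of Proposition \ref{prop5.1}, $W^{-}$ is Lipschitz. Theorem \ref{thm2.3} shows that $W^{+}$ is a.s.\ H\"older of every order $<\beta$, and Proposition \ref{prop5.1} shows that $W=W^{+}+W^{-}$ is $\beta$-H\"older. So the sum has the same H\"older exponent as $W^{+}$. This is the property the proof of Theorem \ref{thm5.2} actually uses for the upper bounds, even though the hypothesis is phrased in terms of $f$. With this, Theorem \ref{thm5.2} shows that the conclusions of Theorems \ref{thm1.1} and \ref{thm1.2} hold for $W$ with $\sigma=\tau=\beta$. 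That yields part (i) of Corollary \ref{coro1.3}. Parts (ii) and (iii) follow because the Sobolev lower bound $\hd A/\beta$ passes to $W$ through the independence factorization in Theorem \ref{thm5.2}. Part (iv) follows from the H\"older upper bound for box dimension.

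Two points need care.

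First, the upper bound for the graph. I would argue it directly rather than through Theorem \ref{thm5.2}: $W$ is $\beta$-H\"older on bounded sets, so \eqref{eq4.2} applies with $\sigma$ replaced by $\beta$. Alternatively, the graph of $W^{+}+W^{-}$ is the image of the graph of $W^{+}$ under the bi-Lipschitz map $(x,z)\mapsto(x,z+W^{-}(x))$, so the two graphs have the same dimension (cf. \cite{MW}). The same map argument gives the graph lower bound without needing Proposition \ref{prop4.2} for the sum.

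Second, the lower bound for the image. Here I would make the independence step in Theorem \ref{thm5.2} explicit. Conditioning on $W^{-}$ gives
\[
\bigl|\mathbb{E}[e^{-2\pi i\langle\xi,W(x)-W(y)\rangle}]\bigr|\le\Bigl|\prod_{n\ge0} J_0\bigl(2\pi|\xi|\lambda^{-\beta n}|\phi_n(\lambda^nx)-\phi_n(\lambda^ny)|\bigr)\Bigr|.
\]
After this, the estimates in the proof of Theorem \ref{thm3.4} go through verbatim, and the statement holds as Corollary \ref{coro1.3} asserts.
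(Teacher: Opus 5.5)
Your proposal is correct and follows the paper's proof: split $W=W^{+}+W^{-}$, observe that $W^{+}$ is a series of the form \eqref{eq1.8} with $\sigma=\tau=\beta$ and that $W$ and $W^{+}$ share the H\"older exponent by Proposition \ref{prop5.1}, then invoke Theorem \ref{thm5.2}. Your reading of that theorem's hypothesis as ``$S$ and $S+f$ share the exponent'' is how the paper itself uses it, and your shear map $(x,z)\mapsto(x,z+W^{-}(x))$ for the graph is a valid, slightly cleaner substitute (it needs no independence) for re-running Proposition \ref{prop4.2} on the sum.
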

\begin{proof}
By Proposition \ref{prop5.1},  $W(x)=W^{+}(x)+W^{-}(x)$ and $W^{+}(x)$  have the same H\"{o}lder exponent. This corollary follows from  Theorem \ref{thm5.2}.
\end{proof}
\begin{corollary}
Consider the random generalization of the Riemann function
\[
\phi_{\Theta}(t)=\sum_{n\in\mathbb{Z}} \frac{(e^{-4\pi^2in^2t}-1)X_n}{-4\pi^2n^2}=-\frac{1}{4\pi^2}R_{2,2,\Theta^+}(-2\pi t)-\frac{1}{4\pi^2}R_{2,2,\Theta^{-}}(-2\pi t)+itX_0+\sum_{n\neq 0}\frac{X_n}{4\pi^2n^2},
\]
where $X_n$ are Steinhaus sequence and $\Theta^{+}=\{\theta_n: n\geq 1\}, \Theta^{-}=\{\theta_n: n<0\}$. Then
\[
\hd \phi_{\Theta}(A)=\frac{4}{3}\hd A,\quad a.s.
\]
\end{corollary}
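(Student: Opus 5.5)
The plan is to apply Theorem~\ref{thm5.2} with $S=R_{2,2,\Theta^+}$ (suitably rescaled) and $f$ equal to everything else in the formula for $\phi_\Theta$. Conditioning on $\Theta^-$ and $\theta_0$ makes $f$ independent of $S$.

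\textbf{Step 1: parameters of the main part.} Write $-\frac{1}{4\pi^2}R_{2,2,\Theta^+}(-2\pi t)=\sum_{n\ge1}a_nX_n\phi_n(\lambda_n t)$ with $a_n=-\frac{1}{4\pi^2}n^{-2}$, $\lambda_n=n^2$ and $\phi_n(s)=e^{-4\pi^2 i s}$. Each $\phi_n$ is bounded and locally uniformly bi-Lipschitz, since $s\mapsto e^{-4\pi^2 is}$ is bi-Lipschitz on intervals of length less than $1/(4\pi)$. Also $\lambda_{n+1}/\lambda_n\le 4$, so \eqref{eq1.6} holds, and $\{a_n\}\in l^1$.

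The computation preceding Corollary~\ref{coro1.4}, with $a=b=2$, gives $\sigma=\tau=\frac{3}{4}\le1$. Theorem~\ref{thm1.1}(i) then yields $\hd S(A)=\min\{2,\frac43\hd A\}=\frac43\hd A$, because $\hd A\le1$ forces $\frac43\hd A<2$. Note that the time rescaling $t\mapsto-2\pi t$ is absorbed into the choice of $\phi_n$ and does not change dimensions.

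\textbf{Step 2: the perturbation.} Set
\[
f(t)=-\tfrac{1}{4\pi^2}R_{2,2,\Theta^-}(-2\pi t)+itX_0+\textstyle\sum_{n\ne0}\frac{X_n}{4\pi^2n^2}.
\]
Here $f$ depends only on $\{\theta_n:n\le0\}$, so it is independent of $S$. The $n<0$ series has the same structure as $S$, with coefficients $n^{-2}$ and frequencies $n^2$. The term $itX_0$ is Lipschitz, and the last term is a constant.

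\textbf{Step 3: the H\"older hypothesis (main obstacle).} Theorem~\ref{thm5.2} requires that $f$ and $S+f$ have the same H\"older exponent, which should be $\frac34^-$. By Theorem~\ref{thm2.3}, both $R_{2,2,\Theta^\pm}$ are almost surely H\"older of every order below $\min\{\sigma,1\}=\frac34$. Adding a Lipschitz term and a constant keeps $f$ and $S+f$ H\"older of every order below $\frac34$.

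In the proof of Theorem~\ref{thm5.2}, the exponent is used only to get the upper bound $\hd (S+f)(A)\le \frac{\hd A}{3/4}$. So I will verify directly that $S+f$ is almost surely $\alpha$-H\"older for every $\alpha<\frac34$, which gives
\[
\hd\phi_\Theta(A)\le \frac{\hd A}{\alpha}\quad\text{for every } \alpha<\tfrac34, \qquad\text{hence}\qquad \hd\phi_\Theta(A)\le \tfrac43\hd A.
\]
The one delicate point is that Theorem~\ref{thm2.3} is stated for a fixed Borel $A$ with the almost-sure event independent of $A$. This is fine because H\"older continuity is a property of the whole path.

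\textbf{Step 4: the lower bound.} Follow the proof of Theorem~\ref{thm5.2}(i). By independence,
\[
\mathbb{E}|\widehat{\mu_g}(\xi)|^2=\iint \mathbb{E}\bigl[e^{-2\pi i\langle \xi,f(x)-f(y)\rangle}\bigr]\prod_n J_0(\cdots)\,d\nu\,d\nu .
\]
The first factor has modulus at most $1$, so the bound from Theorem~\ref{thm3.4} gives $\sd\phi_\Theta(A)\ge\frac{\hd A}{3/4}$ almost surely. By \eqref{eq3.2}, $\hd\phi_\Theta(A)\ge\min\{2,\frac43\hd A\}=\frac43\hd A$.

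Combining Steps 3 and 4 gives equality almost surely.
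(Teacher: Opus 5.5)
Your argument is correct and is essentially the paper's proof. You take $-\frac{1}{4\pi^2}R_{2,2,\Theta^+}(-2\pi t)$ as the main series with $\sigma=\tau=\frac34$, treat the rest as an independent perturbation, and apply Theorem~\ref{thm5.2}: the lower bound comes from the factorized characteristic function, and the upper bound from H\"older continuity of every order below $\frac34$.

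One small correction is needed. The constant $\sum_{n\neq0}X_n/(4\pi^2n^2)$ involves $X_n$ for $n\ge1$, so your $f$ is not literally independent of $S$. Drop it first: it cancels in $g(x)-g(y)$, and translations preserve $\hd$. After that, $f(x)-f(y)$ depends only on $\{\theta_n\colon n\le0\}$, and the factorization in Step 4 is valid.
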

\begin{proof}
Note that linear transformations do not change the Hausdorff dimension of a set. $R_{2,2,\Theta^+}$ and $R_{2,2,\Theta^-}$ have the same probability distribution and  share the same H\"{o}lder exponent. Since $it$ is Lipschitz, this corollary follows from  Theorem \ref{thm5.2}.
\end{proof}

\subsection{One-dimensional trigonometric series}\label{sec5.2}
If we check the proof of Theorem \ref{thm1.1} more carefully, we can see that the argument does not actually require the series $S(x)$ to be two-dimensional. The core of the proof of Theorem \ref{thm1.1} consists of the estimates for the Bessel function $J_0$ in Lemma \ref{lemma3.3}  and the positive lower bound in \eqref{eq3.6}. The Bessel functions arise from Steinhaus random variables, while the lower bound in \eqref{eq3.6} comes from the lower Lipschitz estimate \eqref{eq1.5} of $\phi_n$. In fact, the two conditions can still be satisfied by certain one-dimensional  functions  $\phi_n\colon\mathbb{R}\to\mathbb{R}$.
 For example, suppose that $\phi_n$ is a family of 1-periodic, uniformly bounded and locally uniformly Lipschitz
functions  such that the oscillatory integrals of the  phases $\phi_n(x+\theta)-\phi_n(y+\theta)$  satisfy a certain  uniform polynomial decay; other conditions may refer to \cite{Hunt}. In this subsection, we will illustrate this with the random sine  series:
\begin{equation}\label{eq5.2}
S_1(x)=\sum_{n=1}^{\infty}a_n\sin2\pi(\lambda_nx+\theta_n),\quad a_n\in\mathbb{R},
\end{equation}
where $\theta_n\overset{\mathrm{iid}}{\sim} U[0,1]$. The same also holds for the cosine series, since they share the same distribution.

\begin{theorem}\label{thm5.5}
If $0<\sigma\leq \tau<\infty$, for any Borel set $A\subset\mathbb{R}$, then (i) almost surely,
\[
\min\Bigl\{1,\; \frac{\hd A}{\tau}\Bigr\}\leq \hd S_1(A)\leq \min\Bigl\{1,\; \frac{\hd A}{\min\{\sigma, 1\}}\Bigr\}.
\]
(ii) If $0<\tau<\hd A$, then  $S_1(A)$ has positive one-dimensional Lebesgue measure a.s. (iii) If $0<\tau<\frac{1}{2}\hd A$, then  $S_1(A)$ has interior a.s. (iii) If $\tau=0$ and  $\hd A>0$, then $S_1(A)$ has interior a.s.
\end{theorem}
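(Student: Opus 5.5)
Theorem \ref{thm5.5} will be proved along the same lines as Theorem \ref{thm1.1}, with $d=1$ and the Bessel function $J_0$ replaced by the characteristic function of a single random sine term.

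\textbf{Upper bound.} Theorem \ref{thm2.3} goes through verbatim, since the Marcinkiewicz--Zygmund inequality and the Kolmogorov--Chentsov theorem are dimension free. The function $\sin 2\pi(\cdot)$ is bounded and Lipschitz, and only the upper Lipschitz bound was used there. Hence $S_1$ is a.s. H\"older of every order $<\min\{\sigma,1\}$, which gives the upper bound, together with the trivial bound $\hd S_1(A)\le 1$.

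\textbf{Fourier identity.} For the lower bound, take a Frostman measure $\nu$ on $A$ as in \eqref{eq3.9} and set $\mu=(S_1)_*\nu$. Write
\[
\sin 2\pi(\lambda_n x+\theta)-\sin 2\pi(\lambda_n y+\theta)=2\sin\pi\lambda_n(x-y)\cos\pi(\lambda_n(x+y)+2\theta).
\]
Averaging over $\theta$ gives
\[
\mathbb{E}\bigl[e^{-2\pi i\xi a_n(\sin2\pi(\lambda_nx+\theta_n)-\sin2\pi(\lambda_ny+\theta_n))}\bigr]=J_0\bigl(4\pi\xi a_n\sin\pi\lambda_n(x-y)\bigr),
\]
because $\cos$ of a uniformly distributed phase has the same law as $\sin 2\pi\theta$. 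So the analogue of Lemma \ref{lemma3.2} is
\[
\mathbb{E}|\widehat\mu(\xi)|^2=\iint\prod_n J_0\bigl(2\pi|\xi|\varphi_n(x,y)\bigr)\,d\nu\,d\nu,\qquad \varphi_n(x,y)=2|a_n||\sin\pi\lambda_n(x-y)|.
\]

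\textbf{Lower bound on $\varphi_n$.} The key step is the analogue of \eqref{eq3.4}, which is the main point to check. In \eqref{eq3.4}, $\lambda_n|x-y|$ ranges over the interval $[\delta q^{-\ell-1},\delta q]$. So I choose $\delta\le 1/(2q)$; then $\pi\lambda_n|x-y|\le \pi/2$, and $|\sin t|\ge 2t/\pi$ gives $\varphi_n\ge c|a_n|$ with $c=4\delta q^{-(\ell+1)}$. The estimate \eqref{eq3.5} then follows exactly as before from \eqref{eq3.3}, block by block.

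\textbf{Sobolev bound.} Repeat the proof of Theorem \ref{thm3.4} with $d=1$, using $|\xi|^{s-1}$ and the integral over $\mathbb{R}$. Split the radial integral at $q^{k(\tau+\epsilon)}$. The low part is bounded by $q^{sk(\tau+\epsilon)}$. The high part converges provided $\ell/2>s$, which I arrange by taking $\ell$ large. Summing against $\nu\times\nu(A_k)\lesssim q^{-\alpha k}$ gives $\sd S_1(A)\ge \hd A/\tau$ a.s.

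\textbf{Conclusions.} By \eqref{eq3.2} with $d=1$ this yields (i). Proposition \ref{prop3.1} with $d=1$ gives positive Lebesgue measure when $\hd A/\tau>1$, which is (ii), and interior when $\hd A/\tau>2$, which is (iii). If $\tau=0$, choosing $\epsilon$ small makes $\sd>2$, so $S_1(A)$ again has interior. The only genuinely new point is the normalization of $\delta$ so that $\sin$ does not vanish on the relevant blocks; everything else is formal transcription.
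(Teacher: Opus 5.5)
Your proposal is correct and follows the paper's proof essentially step for step. Both arguments use the same product-to-sum identity to get $\prod_n J_0(4\pi\xi a_n\sin\pi\lambda_n(x-y))$ and the same block estimate from Lemma \ref{lemma3.3}. Both then split the $\xi$-integral at $q^{k(\tau+\epsilon)}$ to obtain $\sd S_1(A)\ge \hd A/\tau$ a.s., and conclude via \eqref{eq3.2} and Proposition \ref{prop3.1} with $d=1$.

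There are only two cosmetic differences. First, the paper normalizes with $\delta=1/2$, which already suffices because $\lambda_n|x-y|<\delta$ on the blocks $k-\ell\le j\le k-1$; you use the more conservative $\delta\le 1/(2q)$. Second, for the H\"older upper bound the paper views $S_1$ as the imaginary part of a series of the form \eqref{eq1.8} with $\phi_n(t)=e^{2\pi i t}$, rather than re-running the moment estimate. Your re-run also works, since the summands of $S_1(x+h)-S_1(x)$ are independent, mean zero, and bounded by $2|a_n||\sin\pi\lambda_n h|$.
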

\begin{proof}
The proof is essentially the same as  that of Theorem  \ref{thm1.1}, we will only indicate the key steps. It suffices to prove that  Theorem \ref{thm3.4} also holds for $S_1$, i.e.,
\begin{equation}\label{eq5.3}
\sd S_1(A)\geq \frac{\hd A}{\tau},\quad a.s.
\end{equation}
Let $\epsilon>0$ and $\ell>2/(\tau+\epsilon)$ be a fixed integer. Assume that $\hd A>0$ and  $\mathrm{diam}\, A\leq \delta/q^{\ell}$.   $\forall\,\alpha\nearrow \hd A$, let  $\nu$ be a Borel measure supported on $A$ satisfying \eqref{eq3.9}. The push-forward measure supported on $S_1(A)$ is given by
\[
\mu_2(E)=\nu(\{x\in A\colon S_1(x)\in E\}),\quad \forall\, \mathrm{Borel}\, E\subset\mathbb{R}.
\]
Note that $\widehat{\mu_2}(\xi)=\int_{\mathbb{R}}e^{-2\pi i\xi S_1(x)}d\nu(x)$ and
\[
\sin2\pi(\lambda_nx+\theta_n)-\sin2\pi(\lambda_ny+\theta_n)=2\sin\pi\lambda_n(x-y)\cos(2\pi\theta_n+\pi\lambda_n(x+y)).
\]
Let  $\widetilde{\varphi}_n(x,y)=2a_n\sin\pi\lambda_n(x-y)$, we have
\[
\mathbb{E}[|\widehat{\mu_2}(\xi)|^2]=\iint_{A\times A} \prod_{n=1}^{\infty} J_0(2\pi \xi \widetilde{\varphi}_n(x,y))d\nu(x)d\nu(y).
\]
Therefore, we decompose the energy into two parts:
\[
 \int_{\mathbb{R}}|\xi|^{s-1}\mathbb{E}[|\widehat{\mu_2}(\xi)|^2]d\xi:=\sum_{k=\ell}^{\infty}\iint_{A_k}(\widetilde{B}_k+\widetilde{U}_k)d\nu(x)d\nu(y),
\]
where $A_k=\{(x,y)\in A\times A: 1/(2q^{k+1})<|x-y|\leq 1/(2q^{k})\} $ and
\[
\widetilde{B}_k=\int_{|\xi|\leq q^{k(\tau+\epsilon)}} |\xi|^{s-1} \prod_{n=1}^{\infty} J_0(2\pi \xi \widetilde{\varphi}_n(x,y))d\xi,\quad \widetilde{U}_{k} = \int_{|\xi|>q^{(\tau+\epsilon)k}} |\xi|^{s-1}\prod_{n=1}^\infty J_0(2\pi r\widetilde{\varphi}_n(x,y))d\xi.
\]
Obviously,
\[
|\widetilde{B}_k|\leq \int_{|\xi|\leq q^{k(\tau+\epsilon)}} |\xi|^{s-1}d\xi= \frac{2}{s}\cdot q^{sk(\tau+\epsilon)}.
\]
It is easy to verify that $\widetilde{\varphi}_n(x,y) =2 a_n\sin \pi\lambda_n(x-y) $  satisfies the condition \eqref{eq3.4} with $\delta=1/2$.  So \eqref{eq3.5} in Lemma \ref{lemma3.3} also holds for $\widetilde{\varphi}_n(x,y)$. Similar to Theorem \ref{thm3.4}, we have if $s<\ell/2$,
\begin{align*}
|\widetilde{U}_k|&\leq\int_{|\xi|>q^{k(\tau+\epsilon)}}|\xi|^{s-1}\prod_{j=k-\ell}^{k-1}\prod_{q^j\leq \lambda_n<q^{j+1}}|J_0(4\pi \xi a_n \sin\pi (\lambda_n x-\lambda_ny))|d\xi\lesssim_{\epsilon} q^{sk(\tau+\epsilon)}.
\end{align*}
The estimates of $\widetilde{B}_k$ and $\widetilde{U}_k$ is combined to imply that \eqref{eq5.3} holds.
\end{proof}
\begin{remark}
Recall that
\[
\mathbb{E}[|\widehat{\mu_2}(\xi)|^2]=\iint_{A\times A} \prod_{n=1}^{\infty} \Bigl(\int_0^1e^{-2\pi i\xi a_n(\sin2\pi(\lambda_nx+\theta)-\sin2\pi(\lambda_ny+\theta))}d\theta\Bigr) d\nu(x)d\nu(y).
\]
If $(x,y)\in A_k$, $k-\ell\leq j<k-1$ and $q^{j}\leq \lambda_n<q^{j+1}$, we see that
\[
0<\frac{1}{2q^{\ell+1}}\leq \lambda^n|x-y|\leq \frac{1}{2}.
\]
The key to proving the above theorem lies in estimating the following oscillatory integral:
\[
\Bigl|\int_0^1e^{-2\pi i t(\sin2\pi(x+\theta)-\sin2\pi\theta)}d\theta\Bigl|=|J_0(4\pi t\sin\pi x)|\leq (1+c|t|^{2})^{-\frac{1}{4}},\quad \frac{1}{2q^{\ell+1}}\leq |x|\leq \frac{1}{2}.
\]
For Weierstrass-type functions ($1<\inf \lambda_{n+1}/\lambda_n\leq \sup\lambda_{n=1}/\lambda_n<\infty$), polynomial decay $C|t|^{-\gamma}$ is sufficient.
\end{remark}

Although Hunt \cite{Hunt} already obtained the Hausdorff dimension of the graph of the random one-dimensional  Weierstrass function over $[0, 1]$,  we can use the methods in Section \ref{sec4} to estimate the  Hausdorff  dimension of the graph of $S_1(x)$ over an arbitrary Borel set. The proofs are essentially the same as those in Section \ref{sec4}.  First, if $x\neq y$, the characteristic function of the one-dimensional random variable $S_1(x)-S_1(y)$ is
\[
\psi_{x,y}(\xi)=\mathbb{E}\bigl[e^{-2\pi i\xi(S_1(x)-S_1(y))}\bigr]=\prod_{n=1}^{\infty}J_0(4\pi a_n\xi\sin\pi\lambda_n(x-y))
\]
Similar to Proposition \ref{prop4.2}, we have an upper bound on the density function of $S_1(x)-S_1(y)$.
\begin{proposition}\label{prop5.6}
Let $0<|x-y|\leq 1/(2q^3)$. If $0\leq \tau<\infty$, then  for any sufficiently small $\epsilon>0$,
\[
|f_{x,y}(u)|\lesssim_{\epsilon} |x-y|^{-(\tau+\epsilon)},\quad u\in\mathbb{R}.
\]
\end{proposition}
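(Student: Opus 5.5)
The plan is to imitate the proof of Proposition \ref{prop4.2}, now in dimension one. By the inversion formula \eqref{eq4.1}, $S_1(x)-S_1(y)$ has the density
\[
f_{x,y}(u)=\int_{\mathbb{R}}\psi_{x,y}(\xi)e^{2\pi i\xi u}\,d\xi,
\]
so $|f_{x,y}(u)|\le \|\psi_{x,y}\|_{L^1(\mathbb{R})}$. It therefore suffices to show
\[
\int_{\mathbb{R}}\Bigl|\prod_{n}J_0\bigl(4\pi a_n\xi\sin\pi\lambda_n(x-y)\bigr)\Bigr|\,d\xi\lesssim_\epsilon |x-y|^{-(\tau+\epsilon)}.
\]
This estimate also justifies using \eqref{eq4.1}, because it shows $\psi_{x,y}\in L^1$.

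\textbf{Step 1: scale and blocks.} Fix $k\ge 3$ with $1/(2q^{k+1})<|x-y|\le 1/(2q^k)$; this is possible because $|x-y|\le 1/(2q^3)$. Take $\ell=3$ and keep only the Bessel factors with $n$ in the blocks $q^j\le\lambda_n<q^{j+1}$, $k-3\le j\le k-1$. All other factors are bounded by $1$ in absolute value.

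For these $n$ we have
\[
\frac{1}{2q^{4}}\le \lambda_n|x-y|\le \frac12 .
\]
Hence $|\sin\pi\lambda_n(x-y)|\ge c>0$, with $c$ depending only on $q$. This is the analogue of \eqref{eq3.4} for $\widetilde\varphi_n=2a_n\sin\pi\lambda_n(x-y)$ with $\delta=1/2$, already noted in the proof of Theorem \ref{thm5.5}.

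\textbf{Step 2: decay of the kept product.} Estimate \eqref{eq3.5} of Lemma \ref{lemma3.3} then gives, for $r=|\xi|$,
\[
\prod_{j=k-3}^{k-1}\prod_{q^j\le\lambda_n<q^{j+1}}|J_0(\cdot)|\lesssim_\epsilon q^{3k(\tau+\epsilon)/2}\,r^{-3/2}.
\]
The derivation is the same as before: apply \eqref{eq3.3}, use $\prod(1+x_i)\ge 1+\sum x_i$ within each block, and use the lower bound $s_j\gtrsim_\epsilon q^{-j(\tau+\epsilon)}$ from \eqref{eq3.8}. Here $a_n$ is real and $|a_n|$ appears squared, so nothing changes.

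\textbf{Step 3: split the integral.} Split at $|\xi|=q^{k(\tau+\epsilon)}$.
\begin{itemize}
\item On $|\xi|\le q^{k(\tau+\epsilon)}$, bound $|\psi_{x,y}|\le 1$. This contributes $2q^{k(\tau+\epsilon)}$.
\item On $|\xi|> q^{k(\tau+\epsilon)}$, the exponent $3/2>1$ makes the tail integrable in one dimension. This contributes
\[
\lesssim q^{3k(\tau+\epsilon)/2}\bigl(q^{k(\tau+\epsilon)}\bigr)^{-1/2}=q^{k(\tau+\epsilon)}.
\]
\end{itemize}
Since $q^{k}\approx |x-y|^{-1}$, the total is $\lesssim_\epsilon |x-y|^{-(\tau+\epsilon)}$, as required.

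\textbf{Main point of care.} The only delicate choice is the number of blocks. In one dimension we need more than two blocks so that the kept product decays faster than $r^{-1}$; three blocks give $r^{-3/2}$. The hypothesis $|x-y|\le 1/(2q^3)$ guarantees $k\ge 3$, so these three blocks exist with indices $j\ge 0$. The case $\tau=0$ is covered as well, since \eqref{eq3.8} only needs $\tau<\infty$.
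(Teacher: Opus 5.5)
Your proposal is correct and follows the paper's proof step for step. Both take $\ell=3$ blocks $j=k-3,\dots,k-1$ with $k\ge 3$ fixed by $|x-y|$, split $\int_{\mathbb{R}}|\psi_{x,y}|$ at $|\xi|=q^{k(\tau+\epsilon)}$, bound the piece below the cut trivially and the piece above by $q^{3k(\tau+\epsilon)/2}|\xi|^{-3/2}$ via \eqref{eq3.5}, and then conclude from $|f_{x,y}|\le\|\psi_{x,y}\|_{L^1}$ and \eqref{eq4.1}. Your explicit check that $\frac{1}{2q^4}\le\lambda_n|x-y|<\frac12$ on all three kept blocks, including $j=k-1$, is slightly more careful than the paper's bare appeal to Lemma \ref{lemma3.3}.
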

\begin{proof}
Take $\ell=3$ in Lemma \ref{lemma3.3}. Let $k\geq 3$ such that $1/(2q^{k+1})<|x-y|\leq 1/(2q^{k})$. Similar to Proposition \ref{prop4.2},  we have
\begin{align*}
\int_{\mathbb{R}}|\psi_{x,y}(\xi)|d\xi&\lesssim \int_{0}^{q^{k(\tau+\epsilon)}}d\xi+ \int_{q^{k(\tau+\epsilon)}}^{\infty} \prod_{j=k-3}^{k-1}\prod_{q^{j}\leq \lambda_n<q^{j+1}}\Bigl|J_0(4\pi \xi a_n \sin\pi\lambda_n(x-y)\Bigr|d\xi\\
& \lesssim_{\epsilon}q^{k(\tau+\epsilon)} + q^{\frac{3k(\tau+\epsilon)}{2}}\int_{q^{k(\tau+\epsilon)}}^{\infty}   \xi^{-\frac{3}{2}} d\xi\lesssim_{\epsilon}|x-y|^{-(\tau+\epsilon)}.
\end{align*}
The upper bound of $|f_{x,y}(u)|$ follows from  \eqref{eq4.1}.
\end{proof}

Using this proposition, we can estimate the lower bound of the Hausdorff  dimension of   $G_{S_1}(A)$.

\begin{theorem}\label{thm5.7}
If $0< \sigma\leq \tau\leq 1$, for any Borel set $A\subset\mathbb{R}$, then (i) almost surely,
\[
\min\Bigl\{ \frac{\hd A}{\tau},\; \hd A+1-\tau\Bigr\}\leq \hd G_{S_1}( A)\leq\min\Bigl\{\frac{\hd A}{\sigma},\; \hd A+1-\sigma\Bigr\}.
\]
(ii) If $\tau=0$ and $\hd A>0$, then $\hd G_{S_1}( A)=1+\hd A$ a.s.
\end{theorem}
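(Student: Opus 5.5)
We mirror the proof of Theorem \ref{thm1.2}, with the ambient dimension of the fibre reduced from $2$ to $1$. The argument has three parts: an upper bound from the H\"older exponent, a lower bound through the image when $\tau$ is large, and an energy estimate using Proposition \ref{prop5.6} when $\tau$ is small.

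\emph{Upper bound.} Theorem \ref{thm2.3} gives almost sure H\"older continuity of $S_1$ of every order $\alpha<\min\{\sigma,1\}=\sigma$. Its proof uses only the Marcinkiewicz--Zygmund inequality and the Lipschitz bound $|\sin 2\pi(\lambda_n x+\theta_n)-\sin2\pi(\lambda_n y+\theta_n)|\le 2\pi\lambda_n|x-y|$, so it carries over verbatim. For an $\alpha$-H\"older map $\mathbb{R}\to\mathbb{R}$, the standard covering argument (\cite[Theorem 10.6]{Kahane}, as in \eqref{eq4.2}) gives $\hd G_{S_1}(A)\le \min\{\hd A/\alpha,\ \hd A+1-\alpha\}$. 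Letting $\alpha\nearrow\sigma$ yields the upper bound in (i). In case (ii) the bound $1+\hd A$ is trivial, since $G_{S_1}(A)\subset A\times\mathbb{R}$.

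\emph{Lower bound when $\tau\ge\hd A$.} Projection onto the second coordinate is Lipschitz and maps $G_{S_1}(A)$ onto $S_1(A)$. Theorem \ref{thm5.5}(i) therefore gives $\hd G_{S_1}(A)\ge \min\{1,\hd A/\tau\}=\hd A/\tau$. Since $\hd A/\tau\le \hd A+1-\tau$ exactly when $\tau\ge \hd A$ (for $\tau\le1$), this settles the case $\tau\ge\hd A$.

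\emph{Lower bound when $0\le\tau<\hd A$.} This is the main step. Choose $\alpha<\hd A$ and $\epsilon>0$ with $\tau+\epsilon<\alpha$. Shrink $A$ so that $\mathrm{diam}A\le 1/(2q^3)$, take a Frostman measure $\nu$ satisfying \eqref{eq3.9}, and let $\mu_1$ be its push-forward under $x\mapsto(x,S_1(x))$. By Fubini and Proposition \ref{prop5.6},
\[
\mathbb{E}[I_s(\mu_1)]=\iint\int_{\mathbb{R}}\frac{f_{x,y}(u)\,du}{(|x-y|^2+u^2)^{s/2}}\,d\nu\,d\nu\lesssim_\epsilon \iint |x-y|^{-(\tau+\epsilon)}\int_{\mathbb{R}}\frac{du}{(|x-y|^2+u^2)^{s/2}}\,d\nu\,d\nu .
\]
For $s>1$, the substitution $u=|x-y|t$ makes the inner integral $c_s|x-y|^{1-s}$. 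It remains to bound $\iint|x-y|^{1-s-\tau-\epsilon}d\nu\,d\nu$. Decompose into the shells $A_k$ with $|x-y|\approx q^{-k}$, which satisfy $(\nu\times\nu)(A_k)\lesssim q^{-\alpha k}$. The resulting sum is $\sum_k q^{k(s-1+\tau+\epsilon-\alpha)}$, which is finite whenever $1<s<\alpha+1-\tau-\epsilon$. That interval is nonempty precisely because $\tau+\epsilon<\alpha$.

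Finite expected energy gives $I_s(\mu_1)<\infty$ almost surely. Hence $\hd G_{S_1}(A)\ge\alpha+1-\tau-\epsilon$, and letting $\alpha\to\hd A$, $\epsilon\to0$ finishes (i); with $\tau=0$ it also gives (ii).

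The main obstacle is the case split. The energy argument needs $s>1$ for the $u$-integral to converge, which is why it applies only when $\tau<\hd A$, while the complementary regime must be handled through Theorem \ref{thm5.5}. We also need $f_{x,y}$ to exist, but its existence and the bound are exactly Proposition \ref{prop5.6}, which we take as given.
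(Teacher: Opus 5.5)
Your proposal is correct and follows the paper's proof essentially step for step: the upper bound from the almost sure H\"older exponent via Kahane's covering result, the Lipschitz projection onto $S_1(A)$ combined with Theorem \ref{thm5.5} when $\tau$ is large, and the expected-energy estimate from Proposition \ref{prop5.6} with $1<s<\alpha+1-\tau-\epsilon$ when $\tau<\hd A$. Your split into $\tau\ge\hd A$ and $\tau<\hd A$ is slightly cleaner than the paper's, since it explicitly covers the boundary case $\tau=\hd A$, which the paper's strict Cases 1 and 2 leave implicit.
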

\begin{proof}
\textbf{Upper bound.} Since the sine  series  $S_1(x)$ in \eqref{eq5.2} is the  imaginary part of the complex series \eqref{eq1.8}, it follows
from Theorem \ref{thm2.3} that the sine series $S_1(x)$ is also almost surely H\"{o}lder continuous with order $\min\{\sigma,1\}-\epsilon$ for any sufficiently small $\epsilon>0$. The upper bound follows from  10.6 of \cite{Kahane}.

\textbf{Lower bound.} Case 1: $\tau>\hd A$. By the  projection and Theorem \ref{thm5.5},
\[
\hd G_{S_1}(A)\geq \hd S_1(A)\geq \frac{\hd A}{\tau}.
\]

Case 2: $0\leq \tau<\hd A$. Assume that   $\mathrm{diam}\,A \leq 1/(2q^3)$, we choose $\alpha\nearrow\hd A$ and $\epsilon\searrow 0$ such that
\[
\tau+\epsilon<\alpha.
\]
Let  $\nu$ be a Borel measure supported on $A$ satisfying \eqref{eq3.9}, and let  $\mu_3$ be  the push-forward measure supported on $G_{S_1}(A)$:
\[
\mu_3(E)=\nu(\{x\in A\colon G_{S_1}(x)\in E\}),\quad \forall\, \mathrm{Borel}\, E\subset\mathbb{R}^2.
\]
Let $A_k=\{(x,y)\in A\times A: 1/(2q^{k+1})<|x-y|\leq 1/(2q^k)\}$. It follows from Proposition \ref{prop5.6} that if $1<s<1+\alpha-\tau-\epsilon$,
\begin{align*}
\mathbb{E}[I_s(\mu_3)]&=\iint \mathbb{E}\bigl[(|x-y|^2+|S_1(x)-S_1(y)|^2)^{-\frac{s}{2}}\bigr]d\nu(x)d\nu(y)\\
&=\iint \int_{\mathbb{R}}\frac{f_{x,y}(u)}{(|x-y|^2+u^2)^{\frac{s}{2}}}du d\nu(x)d\nu(y)\lesssim_{\epsilon} \iint \int_{0}^{\infty}\frac{|x-y|^{-(\tau+\epsilon)}}{(|x-y|^2+u^2)^{\frac{s}{2}}}du d\nu(x)d\nu(y)\\
&\lesssim_{\epsilon}\sum_{k=3}^{\infty}\iint_{A_k}|x-y|^{1-s-\tau-\epsilon}d\nu(x)d\nu(y)\int_0^{\infty}\frac{du}{(1+u^2)^{\frac{s}{2}}}\lesssim_{\epsilon} \sum_{k=3}^{\infty}q^{-k(1-\tau-\epsilon-s+\alpha)}<\infty.
\end{align*}
This implies that
\[
\hd G_{S_1}(A)\geq \hd A+1-\tau.
\]

Case 3: $\tau=0$ and $\hd A>0$. The lower bound comes from Case 2, and the upper bound follows from $G_{S_1}(A)\subset A\times \mathbb{R}$.
\end{proof}


We finally apply  the theorems from this section to the random real-valued Riemann function:
\[
\widetilde{R}_{a,b,\Theta}(x) =\sum_{n=1}^{\infty}\frac{\sin2\pi (n^a x+\theta_n)}{n^b},\quad x\in\mathbb{R},
\]
where $\theta_n\overset{\mathrm{iid}}{\sim} U[0,1]$. In particular, $\hd(\widetilde{R}_{2,2,\Theta})([0,1]) = 5/4$ almost surely, which is also the box dimension result where Chamizo and C\'{o}rdoba obtained in \cite{CC1999}.

\begin{corollary}
If $1<b\leq a+\frac{1}{2}$, for any Borel set $A\subset\mathbb{R}$, then (i) almost surely,
\begin{align*}
\hd \widetilde{R}_{a,b,\Theta}(A)&= \min\Bigl\{1,\; \frac{2a}{2b-1}\hd A\Bigr\},\\
\hd G_{\widetilde{R}_{a,b,\Theta}}(A)&= \min\Bigl\{\frac{2a}{2b-1}\hd A,\;\hd A+1-\frac{2b-1}{2a}\Bigr\}.
\end{align*}
(ii) If $\hd A>\frac{2b-1}{2a}$,  $\widetilde{R}_{a,b,\Theta}(A)$ has positive  one-dimensional Lebesgue measure a.s.  (iii) If $\hd A>\frac{2b-1}{a}$, $\widetilde{R}_{a,b,\Theta}(A)$ has interior points a.s. (iv) If $\bd A=\hd A$, then the above two dimension formulas also hold for the box-counting dimension.
\end{corollary}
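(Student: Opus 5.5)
This corollary is a direct specialization of Theorems \ref{thm5.5} and \ref{thm5.7} to $a_n=n^{-b}$ and $\lambda_n=n^a$. The plan is to check the hypotheses, compute $\sigma$ and $\tau$, and substitute.

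\textbf{Hypotheses.} The sequence $\lambda_n=n^a$ is increasing with $\lambda_1=1$. The ratio $\lambda_{n+1}/\lambda_n=(1+1/n)^a\le 2^a$, so \eqref{eq1.6} holds with some finite $q>1$. Since $b>1$, we have $\sum n^{-b}<\infty$. The phase function is $\sin 2\pi(\cdot)$. As in the proof of Theorem \ref{thm5.5}, what is needed is the lower bound \eqref{eq3.4} for $\widetilde\varphi_n(x,y)=2n^{-b}\sin\pi\lambda_n(x-y)$, with $\delta=1/2$. This holds because $\lambda_n|x-y|\in[1/(2q^{\ell+1}),1/2]$ in the relevant range, and there $|\sin\pi t|\ge 2t$.

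\textbf{Computing $\sigma$ and $\tau$.} For block $k$, the indices satisfy $q^{k/a}\le n<q^{(k+1)/a}$. Comparing the sum with an integral gives
\[
s_k^2=\sum_{q^{k/a}\le n<q^{(k+1)/a}} n^{-2b}\approx \int_{q^{k/a}}^{q^{(k+1)/a}}x^{-2b}\,dx\approx q^{-(2b-1)k/a}.
\]
For small $k$ the block may contain few or no integers. This affects only finitely many $k$, since the block length $q^{k/a}(q^{1/a}-1)\to\infty$, and hence does not change the $\liminf$ or $\limsup$. Therefore $\sigma=\tau=(2b-1)/(2a)$. The condition $1<b\le a+\tfrac12$ is exactly $0<\tau\le 1$, so Theorems \ref{thm5.5} and \ref{thm5.7} apply.

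\textbf{Substitution.} Since $\sigma=\tau\le1$, the lower and upper bounds in Theorem \ref{thm5.5}(i) coincide. This gives $\hd\widetilde R_{a,b,\Theta}(A)=\min\{1,\frac{2a}{2b-1}\hd A\}$. The bounds in Theorem \ref{thm5.7}(i) also coincide, giving the graph formula $\min\{\frac{2a}{2b-1}\hd A,\ \hd A+1-\frac{2b-1}{2a}\}$.

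Parts (ii) and (iii) are Theorem \ref{thm5.5}(ii),(iii) with $\tau=(2b-1)/(2a)$:
\begin{itemize}
\item[(ii)] the condition $\tau<\hd A$ reads $\hd A>\frac{2b-1}{2a}$;
\item[(iii)] the condition $\tau<\tfrac12\hd A$ reads $\hd A>\frac{2b-1}{a}$.
\end{itemize}

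For (iv), use that $\hd\le\underline{\dim}_{\mathcal B}\le\overline{\dim}_{\mathcal B}$, both for the image and for the graph. The upper bounds hold for box dimension, since they come from Hölder continuity: Theorem \ref{thm2.3} for the image and \eqref{eq4.2} (i.e. \cite[Theorem 10.6]{Kahane}) for the graph. When $\bd A=\hd A$, these upper bounds equal the Hausdorff lower bounds, so all the dimensions agree.

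The only step needing real care is the block-sum asymptotic for $s_k$ and the handling of short initial blocks. Everything else is bookkeeping.
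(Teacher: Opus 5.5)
Your proposal is correct and follows the paper's own (implicit) route. The paper computes $\sigma=\tau=(2b-1)/(2a)$ exactly as in the complex Riemann case and substitutes into Theorems \ref{thm5.5} and \ref{thm5.7}, with (iv) following because the H\"older-based upper bounds also hold for box dimension. One small slip: for the real-valued graph, the relevant upper bound is the one-dimensional analogue of \eqref{eq4.2}, namely $\min\{\dim A/\alpha,\ \dim A+1-\alpha\}$ as used in Theorem \ref{thm5.7}. It is not \eqref{eq4.2} itself, which carries $2(1-\alpha)$ and would not match your lower bound.
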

Similar result  can be obtained for the random real-valued Weierstrass-type functions, but there are too many  dimension results of the graph of the real-valued Weierstrass-type functions, so we omit it here.

\section{Open Questions}\label{sec6}
In this section, we  are going to pose some open questions and conjectures. We first consider the conjectures in the deterministic case. Then will talk pose some interesting problems in the random scenario.

\subsection{Deterministic case}
Consider the following deterministic complex Weierstrass function:
\[
W_{\beta, \lambda}(x)=\sum_{n=1}^\infty  \lambda^{-\beta n} e^{2\pi i\lambda^nx},\quad 0<\beta\leq 1,\; \lambda>1.
\]
Drawing upon all existing results \cite{Belov, SZ, KWW,RS2021,RS2024}, our observations (Figure  \ref{fig1}), and Corollary \ref{coro1.3}, we hereby propose the following conjecture for $W_{\beta, \lambda}$.

\begin{conjecture}
For all $\lambda>1$ and $0<\beta\leq 1$, then (i) for all Borel $A\subset\mathbb{R}$,
\begin{align*}
\hd W_{\beta, \lambda}(A)&=\min\left\{2,\; \frac{\hd A}{\beta}\right\},\\
\hd G_{W_{\beta, \lambda}}( A)&=\min\Bigl\{\frac{\hd A}{\beta},\; \hd A+2-2\beta\Bigr\}.
\end{align*}
(ii) If $0<\beta<\frac{1}{2}\hd A$, then $W_{\beta, \lambda}(A)$ has  interior points. (iii) If $\bd A=\hd A$, then the above two dimension formulas also hold for the box-counting dimension.
\end{conjecture}
Similarly, we have the conjecture for the deterministic complex and real-valued  Riemann functions:
\[
R_{a, b}(x)=\sum_{n=1}^{\infty}  \frac{e^{2\pi i n^{a}x}}{n^b},\quad \widetilde{R}_{a, b}(x)=\sum_{n=1}^{\infty} \frac{\sin2\pi n^ax}{n^b},\quad a>0,\;\; b>1.
\]
\begin{conjecture}
If $1<b\leq a+\frac{1}{2}$, for any Borel set $A\subset\mathbb{R}$, then (i)
\begin{align*}
\hd R_{a,b}(A)&= \min\Bigl\{2,\; \frac{2a}{2b-1}\hd A\Bigr\},\\
\hd G_{R_{a,b}}(A)&= \min\Bigl\{\frac{2a}{2b-1}\hd A,\;\hd A+2-\frac{2b-1}{a}\Bigr\},\\
\hd G_{\widetilde{R}_{a,b}}(A)&= \min\Bigl\{\frac{2a}{2b-1}\hd A,\;\hd A+1-\frac{2b-1}{2a}\Bigr\}.
\end{align*}
(ii) If $\hd A>\frac{2b-1}{a}$,  $R_{a,b}(A)$ has interior points. (iii) If $\bd A=\hd A$, then the above three dimension formulas also hold for the box-counting dimension.
\end{conjecture}

\subsection{Random case}
Next, we consider the random complex Weierstrass function in \eqref{eq1.9}:
\[
W_{\beta}(x)=W_{\beta,\lambda,\Theta}(x)=\sum_{n=1}^{\infty}\lambda^{-\beta n}e^{2\pi i(\lambda^nx+\theta_n)},\quad \lambda>1,\; \beta\in (0, 1], \;\; \theta_n\overset{\mathrm{iid}}{\sim} U[0,1],\;\; x\in\mathbb{R}.
\]
\subsubsection{Interior points}
It is noteworthy  that Proposition \ref{prop3.1} relies on the singularity of the measure supported on $E$. It is a direct consequence of whether the Fourier transform of measure belongs to $L^2$ or $L^1$ \cite[Theorems 3.3 and 3.4]{Mattila}; one may refer to \cite{GH} for the random versions. Let $\mu={W_{\beta}}_{\ast}\nu$ be the push-forward measure given by \eqref{eq3.10} and $\mathcal{L}_2$ be the two-dimensional Lebesgue measure. According to Theorem \ref{thm1.1} and Proposition \ref{prop3.1}, the singularity of $\mu$ is as follows:
\begin{corollary}
(i) If $\beta<\frac{1}{2}\hd A$, then $\mu\ll\mathcal{L}_2$ a.s.  (ii) If $\beta<\frac{1}{4}\hd A$,then $\frac{d\mu}{d\mathcal{L}_2}$ is continuous about $\xi$ a.s. (iii) If $\beta>\frac{1}{2}\hd A$, then $\mu\perp\mathcal{L}_2$, a.s.
\end{corollary}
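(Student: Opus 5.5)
The plan is to read all three statements off the energy estimate in the proof of Theorem \ref{thm3.4} and the H\"older upper bound of Theorem \ref{thm2.3}. For $W_\beta$ we have $\sigma=\tau=\beta$. Throughout, $\nu$ is a Frostman measure on $A$ satisfying \eqref{eq3.9} with exponent $\alpha<\hd A$; the hypotheses will let us choose $\alpha$ with $2\beta<\alpha$ in (i) and $4\beta<\alpha$ in (ii). Finally $\mu=(W_\beta)_\ast\nu$.

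For (i), fix $\epsilon>0$ with $2<\alpha/(\beta+\epsilon)$ and take $s\in(2,\alpha/(\beta+\epsilon))$. The proof of Theorem \ref{thm3.4} gives
\[
\mathbb{E}\int_{\mathbb{R}^2}|\xi|^{s-2}|\widehat{\mu}(\xi)|^2\,d\xi<\infty,
\]
so this integral is finite almost surely. We split $\mathbb{R}^2$ into $|\xi|\le 1$ and $|\xi|>1$.
\begin{itemize}
\item On $|\xi|>1$ we have $|\xi|^{s-2}\ge 1$, so the finite integral controls $\int_{|\xi|>1}|\widehat\mu|^2$.
\item On $|\xi|\le 1$ we use the trivial bound $|\widehat\mu|\le\nu(A)$.
\end{itemize}
Together these give $\widehat{\mu}\in L^2(\mathbb{R}^2)$ a.s. By Plancherel (Mattila, Theorem 3.3) $\mu$ then has an $L^2$ density, so $\mu\ll\mathcal{L}_2$ a.s.

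For (ii), choose $\epsilon$ so that $4<\alpha/(\beta+\epsilon)$ and take $s\in(4,\alpha/(\beta+\epsilon))$. Almost surely the weighted integral above is finite. By Cauchy--Schwarz,
\[
\int_{|\xi|>1}|\widehat\mu|\le\Bigl(\int|\xi|^{s-2}|\widehat\mu|^2\Bigr)^{1/2}\Bigl(\int_{|\xi|>1}|\xi|^{2-s}d\xi\Bigr)^{1/2},
\]
and the last factor is finite because $s-2>2$. Adding the bounded contribution from $|\xi|\le1$ gives $\widehat\mu\in L^1$ a.s. The inversion formula \eqref{eq4.1} (Mattila, Theorem 3.4) then shows that $d\mu/d\mathcal{L}_2$ is continuous and bounded.

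For (iii), no Fourier argument is needed. Theorem \ref{thm2.3} with $\sigma=\beta\le 1$ gives, almost surely, $\hd W_\beta(A)\le \hd A/\beta<2$, and hence $\mathcal{L}_2(W_\beta(A))=0$. Since $\mu(\mathbb{R}^2\setminus W_\beta(A))=\nu(\mathbb{R}\setminus A)=0$, $\mu$ is concentrated on a Lebesgue-null set, so $\mu\perp\mathcal{L}_2$ a.s. This holds for every finite $\nu$ on $A$; the null set may depend on $\nu$ if one does not use the "for all $A$" form of Theorem \ref{thm2.3}.

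The main point needing care is the quantifiers rather than any estimate. The corollary should be read with $\nu$ a Frostman measure whose exponent exceeds $2\beta$ (resp.\ $4\beta$), and with the exceptional null set depending on $\nu$. The passage from "finite expectation" to "finite almost surely" also requires measurability of $\Theta\mapsto\int|\xi|^{s-2}|\widehat\mu|^2$, which I would justify by Tonelli, using the joint continuity of $(\Theta,x)\mapsto W_\beta(x)$ guaranteed by $\{a_n\}\in l^1$.
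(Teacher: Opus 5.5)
Your proposal is correct and is essentially the paper's argument. The paper gets (i) and (ii) from the finiteness of $\int|\xi|^{s-2}\,\mathbb{E}[|\widehat{\mu}(\xi)|^2]\,d\xi$ established in the proof of Theorem~\ref{thm3.4}, taken with $s>2$ (resp.\ $s>4$) and combined with the $L^2$/$L^1$ Fourier criteria (Mattila, Theorems 3.3--3.4, which underlie Proposition~\ref{prop3.1}); it gets (iii) from the H\"older upper bound of Theorem~\ref{thm2.3}, exactly as you do. Your version is slightly more careful than the paper's appeal to Proposition~\ref{prop3.1}, which is stated for sets rather than for this particular $\mu$; just keep in mind that the $\nu$ in (3.10) is, as in that proof, supported on a piece of $A$ of diameter at most $\delta/q^{\ell}$, and this is what makes the estimate from Theorem~\ref{thm3.4} available.
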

The  statement (iii) comes from the H\"{o}lder property. From a probability perspective, the existence of interior points in $W_{\beta}(A)$  is related to the continuity of the local time of $W_{\beta}$ on $A$, where the local time $l(\xi)$ is defined to be the Radon-Nikodym derivative $\frac{d\mu}{d\mathcal{L}_2}(\xi)$ if $\mu\ll\mathcal{L}_2$.
For a stochastic process $X_t$, local time is a useful tool to study fractal properties of the level set $X^{-1}(x)=\{t: X(t)=x\}$. For more information, please refer to \cite{GH,SX,Xiao}.

 Belov  \cite{Belov} proved that for any $\beta<1/2$, sufficiently large $\lambda$ and any deterministic  $\Theta$, the  deterministic Weierstrass function $W_{\beta,\lambda,\Theta}$ is space-filling on all large intervals. We hope that this result holds in general, which requires proving that $\widehat{\mu}\in L^1(d \mathbb{P}\times d\mathcal{L}_2)$ when $0<\beta<\frac{1}{2}\hd A$. We cannot do this by simply applying Proposition \ref{prop3.1} (ii). Here is the open question:

\begin{question}
 If $\frac{1}{4}\hd A\leq \beta<\frac{1}{2}\hd A$, is it possible that  $\widehat{\mu}\in L^1(d \mathbb{P}\times d\mathcal{L}_2)$, or  $W_{\beta}(A)$ has interior points?
\end{question}

\subsubsection{Uniform dimension result}
In Corollary \ref{coro1.3}, we see that the exceptional even of  probability zero, on which \eqref{eq1.10} fails, depends on the Borel $A$. So, the Borel set in Corollary \ref{coro1.3}  can not be random.  A natural question is whether there  exists a null probability event $O$ such that for every $\Theta\notin O$, \eqref{eq1.10} holds for all Borel sets. Such a result, if exists, is called a uniform Hausdorff  dimension result and is applicable even if $A$ is a random set. The first uniform dimension result for Brownian motion is due to Kaufman \cite{Kau}.  For the planar fractional Brownian motion $B_{\beta}(t)$ with Hurst index $\beta\in (0,1)$, Monrad and Pitt \cite{MP} proved the following uniform dimension result  (we only introduce the planar case for comparison with our results): if $\beta\geq 1/2$, then almost surely,
 \begin{equation}\label{eq6.1}
\hd B_{\beta} (A) =\frac{1}{\beta}\hd A,\quad \text{for all closed} \; A\subset\mathbb{R}.
 \end{equation}
 Moreover, they also proved that if $\beta<1/2$, then $\hd B_{\beta}^{-1}(F)=1-2\beta+\beta\hd F$ for  every closed set $F\subset\mathbb{R}^2$,  almost surely.

It follows from Proposition \ref{prop5.1} that $W_{\beta}$ is $\beta$-H\"{o}lder for $\beta\in(0, 1), \lambda>1$ and all $\Theta$. Similarly, we can prove that $W_{1}$ is $(1-\epsilon)$-H\"{o}lder for any $\epsilon\in(0, 1)$ and all $\Theta$. Therefore, for all $\beta\in [1/2, 1], \lambda>1$ and all $\Theta$, we have
\[
\hd W_{\beta}(A)\leq \frac{\hd A}{\beta}, \quad \forall\, \text{Borel}\; A\subset \mathbb{R}.
\]
Note that $A$ here can be related to $\Theta$. We want to know whether the lower bound also holds almost surely.

\begin{question}
Does the uniform dimension result \eqref{eq6.1} hold for the random Weierstrass function $W_{\beta}$ for $\beta\in [1/2, 1]$? If not, what is the Hausdorff dimension of the preimage $W_{\beta}^{-1}(F)$ for  closed set $F\subset\mathbb{R}^2$?
\end{question}

\subsubsection{Cantor boundary behavior}
We now regard the random Weierstrass function in \eqref{eq1.9} as  the boundary value of an analytic function within the unit circle (for simplicity, we assume $\lambda\geq 2$ is an integer):
\[
W_{\Theta}(z)=W_{\beta, \lambda, \Theta}(z)=\sum_{n=1}^{\infty}(\lambda^{-\beta n}e^{2\pi i\theta_n})\cdot z^{\lambda^n},\quad z\in \overline{\mathbb{D}}.
\]
Let $\Gamma_{\beta, \lambda, \Theta}$ be the unique unbounded connected component of $\widehat{\mathbb{C}}\setminus W_{\beta, \lambda, \Theta}(\partial\mathbb{D})$, we call $\partial\Gamma_{\beta, \lambda, \Theta}$  the outer boundary of $W_{\beta, \lambda, \Theta}(z)$. Dong, Lau and Liu \cite{DLL} showed that  that $\mathcal{C}_{\beta,\lambda, \Theta}=W_{\Theta}^{-1}(\partial\Gamma_{\beta, \lambda, \Theta})\subset \partial\mathbb{D}$ is a Cantor-type set for  all $\beta\in(0, 1), \lambda\geq 2$ and every sample path $\Theta$ (by a Cantor-type set, we mean the set is compact, perfect and totally disconnected).

Studying the Hausdorff dimension of the outer boundary has been challenging problem and yet it connects to other areas. Lawler, Schramm, and Werner developed the SLE theory and used it to prove that the Hausdorff dimension of the outer boundary of the planar Brownian motion is $4/3$ almost surely  \cite{Lawler}. Lind and Robins \cite{LR} studied the deterministic Loewner equation driven by some real-valued Weierstrass function and showed that it exhibits a phase transition as in SLE.

We have not found a viable approach to compute the dimensions of the Cantor-type set and the outer boundary of the Weierstrass function; readers may refer to \cite{DLL} for further details. For the random model, note that
\[
W_{\Theta}(\mathcal{C}_{\beta,\lambda, \Theta})=\partial\Gamma_{\beta,\lambda,\Theta}.
\]
If the uniform dimension result holds for $W_{\beta, \lambda, \Theta}$ with some $\beta\in[1/2, 1)$ and $\lambda\geq 2$, then almost surely,
\[
\hd \partial \Gamma_{\beta,\lambda,\Theta} =\frac{1}{\beta} \hd \mathcal{C}_{\beta,\lambda,\Theta},
\]
and hence $ \hd \mathcal{C}_{\beta,\lambda,\Theta}\geq \beta$  (this is because $\partial \Gamma_{\beta,\lambda,\Theta}$ is locally connected \cite{DLL}, so its Hausdorff  dimension is at least 1).
\begin{question}
What are the Hausdorff dimensions of the random Cantor-type set $\mathcal{C}_{\beta,\lambda,\Theta}$ and the outer boundary  $\partial \Gamma_{\beta,\lambda,\Theta}$? What about the same question in the deterministic case?
\end{question}

\end{document}